\documentclass[11pt]{amsart}
\usepackage[utf8]{inputenc}
\usepackage{amsmath,amssymb}
\usepackage{centernot}
\usepackage{bbm}
\usepackage{mleftright}
\usepackage{mathtools}
\usepackage[margin=3cm]{geometry}

\usepackage[hidelinks]{hyperref}
\usepackage{appendix}

\usepackage{tikz}
\usepackage{subfigure}

\newtheorem{theorem}{Theorem}[section]
\newtheorem{lemma}[theorem]{Lemma}
\newtheorem{proposition}[theorem]{Proposition}

\theoremstyle{definition}
\newtheorem{definition}[theorem]{Definition}

\newtheorem{example}[theorem]{Example}
\newtheorem{question}[theorem]{Question}
\newtheorem{remark}[theorem]{Remark}

\newcommand{\NN}{{\mathbb{N}}}
\newcommand{\RR}{{\mathbb{R}}}
\newcommand{\ZZ}{{\mathbb{Z}}}

\newcommand{\eins}{\mathbbm{1}}
\newcommand{\flt}{\mathrm{Flt}}
\newcommand{\hausdorff}{d}
\newcommand{\GL}{\mathrm{GL}}

\DeclareMathOperator{\conv}{conv}
\DeclareMathOperator{\aff}{aff}
\DeclareMathOperator{\spann}{span}
\DeclareMathOperator{\inter}{int}
\DeclareMathOperator{\cl}{cl}
\DeclareMathOperator{\relint}{relint}
\DeclareMathOperator{\wdt}{width}
\DeclareMathOperator{\diam}{diam}
\DeclareMathOperator{\ext}{ext}
\DeclareMathOperator{\expo}{expo}
\DeclareMathOperator{\bd}{bd}
\DeclareMathOperator{\vol}{vol}
\DeclareMathOperator{\Vol}{Vol}
\DeclareMathOperator{\centroid}{g}

\newcommand{\sukz}{\lambda}

\newcommand{\normalfan}{\mathrm{N}}
\DeclareMathOperator{\ncone}{ncone}
\newcommand{\facefan}{\mathrm{F}}
\DeclareMathOperator{\cone}{cone}

\title{Lattice Reduced and Complete Convex Bodies}
\author{Giulia Codenotti and Ansgar Freyer}
\date{\today}

\address{Freie Universit\"at Berlin, Fachbereich Mathematik und Informatik, Arnimallee 2, D-14195 Berlin}
\email{giulia.codenotti@fu-berlin.de}
\address{Technische Universit\"at Wien, Institut f\"ur Diskrete Mathematik und Geometrie, Wiedner Hauptstraße 8-10/1046, A-1040 Wien}
\email{ansgar.freyer@tuwien.ac.at} 

\usepackage[colorinlistoftodos]{todonotes}



\numberwithin{equation}{section}
\mathtoolsset{showonlyrefs=true}

\begin{document}

\begin{abstract}
The purpose of this paper is to study convex bodies $C$ for which there exists no convex body $C^\prime\subsetneq C$ of the same lattice width. Such bodies shall be called ``lattice reduced'', and they occur naturally in the study of the flatness constant in integer programming, as well as other problems related to lattice width. We show that any simplex that realizes the flatness constant must be lattice reduced and prove structural properties of general lattice reduced convex bodies: they are polytopes with at most $2^{d+1}-2$ vertices and their lattice width is attained by at least $\Omega(\log d)$ independent directions.
Strongly related to lattice reduced bodies are the ``lattice complete bodies'', which are convex bodies $C$ for which there exists no $C^\prime\supsetneq C$ such that $C^\prime$ has the same lattice diameter as $C$. Similar structural results are obtained for lattice complete bodies. Moreover, various construction methods for lattice reduced and complete convex bodies are presented.
\end{abstract}

\maketitle

\noindent {\bf Mathematics Subject Classification (2020):} 52B05, 52B11, 52B12, 52C07, 52C45. \\
\noindent {\bf Keywords:} \textit{Reduced bodies, complete bodies, lattice width, flatness constant}.

\section{Introduction}
\label{sec:intro}

A \emph{convex body} $C\subset\RR^d$ is a convex compact set with non-empty interior. 
A \emph{lattice} $\Lambda\subset\RR^d$ is a discrete subgroup of $\RR^d$, which we shall assume to be full-dimensional. The \emph{dual lattice} of $\Lambda$ is then defined as $\Lambda^\star = \{y\in\RR^d : x\cdot y \in\ZZ,~\forall x\in \Lambda\}$. The \emph{lattice width} of a convex body $C$ with respect to the lattice $\Lambda$ is defined as
\begin{equation}
\label{eq:lattice_width}
\wdt_\Lambda(C) = \min_{y\in\Lambda^\star\setminus\{0\}}\max_{a,b\in C} y\cdot (a-b).
\end{equation}  
Approximately we can think of the lattice width as the minimum number of parallel lattice hyperplanes which intersect the convex body.
We call convex bodies which do not contain lattice points in their interior \emph{hollow}. The lattice width turns out to be the right parameter to formalize the intuition that hollow convex bodies cannot be too ``large". Indeed, Kinchin's celebrated flatness theorem \cite{khinchine} states that in fixed dimension the lattice width of hollow convex bodies is bounded: there exists a number $c_d>0$ depending only on the ambient dimension $d$ such that for any hollow convex body $C\subset\RR^d$ one has $
\wdt_{\Lambda}(C) \leq c_d$.
The smallest number $c_d$ for which this holds is known as the \emph{flatness constant} and is denoted by $\flt(d)$ in the following. 

The flatness theorem had a great impact on the theory of integer programming, since it was a key ingredient in Lenstra's polynomial time algorithm to solve integer programs for a fixed number of variables \cite{lenstra}. The estimates of the efficiency of these algorithms depend on the best known upper bound for the flatness constant $\flt(d)$. Thus, after Lenstra's discovery there have been many improvements on the upper bound for the flatness constant. In the seminal paper \cite{coveringminima}, Kannan and Lov\'asz show the polynomial bound $\flt(d)\leq cd^2$, where $c$ is an absolute constant. The next breakthrough was achieved by Banaszczyk \cite{transf}, who showed that the lattice width of a  hollow centrally symmetric convex body is at most $cd\log d$. Using this result, the bound of Kannan and Lov\'asz was improved successively by Banasczcyk et al.\ \cite{BanaszczykLitvakPajorEtAl1999}, Rudelson \cite{rudelson} and, very recenty, by Reis and Rothvoss who showed $\flt(d)\leq c d \log(d)^3$ \cite{ReisRothvoss}.
This bound is nearly optimal, since the following trivial lower bound is linear in $d$: Consider the standard simplex $\Delta_d := \conv(0, e_1,\ldots, e_d)$, where  $e_1,\ldots,e_d$ are the standard basis vectors. Its dilation $d \cdot \Delta_d$ by factor $d$ is a hollow body of lattice width $d$, which implies $\flt(d)\geq d$. 

The simplex $d\cdot \Delta_d$ is however not the hollow convex body with largest width. In fact, determining the \emph{exact} flatness constant is a challenge for each fixed dimension $d >1$. The only known value is in dimension $2$: Hurkens proved that $\flt(2) = 1+\tfrac{2}{\sqrt{3}}$ \cite {hurkens} by finding the (unique!) realizer for the flatness constant via an exhaustive search. A \emph{realizer} for the flatness constant $\flt(d)$ is a hollow convex $d$-body of maximum possible width, that is, equal to the flatness constant. 
All dimensions higher than $2$ remain open, and partial progress can be summarized as follows. In dimension 3, Codenotti and Santos \textbf{\cite{codenottisantos}} constructed a hollow tetrahedron which they conjecture to be a realizer for $\flt(3)$, and Averkov, Codenotti, Macchia and Santos showed in \textbf{\cite{averkovcodenotti}} that this tetrahedron is (at least) a strict local maximizer. 
In dimensions 4 and 5, local maximizers were found by Mayrhofer, Schade and Weltge \cite{weltgeetal} by following up on an approach established in \textbf{\cite{averkovcodenotti}}. They also construct a family of hollow convex bodies $C_d\subset\RR^d$ with $\wdt_{\ZZ^d}(C_d) \geq 2d - o(1)$. 

To determine the exact values of $\flt(d)$, a logical first step is to narrow down the search space of potential realizers. One can exploit that the lattice width is a non-decreasing functional under containment and, similarly, that the property of being hollow is preserved under taking subsets. Therefore, Lov\'asz \cite{lovasz_2d} observed that one can restrict the search for realizers of the flatness constant to  \emph{inclusion-maximal hollow bodies}, hollow convex bodies $C$ such that any $K\supsetneq C$ is not hollow. 

We propose a novel approach by observing that to study realizers of the flatness constant, one can alternatively restrict to \emph{lattice reduced convex bodies}: 

\begin{definition}
A convex body $C\subset\RR^d$ is called \emph{lattice reduced} with respect to $\Lambda$ if there is no $C^\prime \subsetneq C$ such that $\wdt_\Lambda(C^\prime) = \wdt_\Lambda(C)$. 
\end{definition}

Since the interior lattice point enumerator $C\mapsto |\inter(C)\cap\Lambda|$ is not strictly monotonous, it is not immediate that the flatness constant is achieved by lattice reduced convex bodies. To prove that this does hold, we first have to show that any convex body contains a lattice reduced convex body of the same lattice width (cf.\ Theorem \ref{thm:zorn}). Even so, it is possible that the flatness constant is also realized by non-reduced convex bodies. 
For simplices, we exclude this possibility: 

\begin{theorem}
\label{thm:local}
Let $\Lambda\subset\RR^d$ be a $d$-dimensional lattice and let $S$ be a local maximum of $\wdt_\Lambda$ on the class of hollow $d$-simplices. Then, $S$ is lattice reduced with respect to $\Lambda$.
\end{theorem}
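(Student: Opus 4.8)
The plan is to argue by contraposition: assuming that the hollow simplex $S$ is \emph{not} lattice reduced, I will construct, arbitrarily close to $S$ in the space of hollow $d$-simplices, a competitor of strictly larger lattice width, contradicting local maximality. Throughout write $w = \wdt_\Lambda(S)$ and let $Y \subseteq \Lambda^\star\setminus\{0\}$ be the set of directions realizing the width, that is, those $y$ with $\max_{a,b\in S} y\cdot(a-b) = w$.

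First I would extract a \emph{width-redundant} vertex. By assumption there is a convex body $C'\subsetneq S$ with $\wdt_\Lambda(C')=w$ (alternatively one may invoke Theorem~\ref{thm:zorn} to take $C'$ reduced, and hence a polytope by Theorem~\ref{thm:polytopes}). Since $C'$ is convex and properly contained in the simplex $S$, it cannot contain all $d+1$ vertices, so fix a vertex $v_0\notin C'$. The key observation is a sandwich argument: for every $y\in Y$ one has $\max_{x\in C'} y\cdot x \le \max_{x\in S} y\cdot x$ and $\min_{x\in C'} y\cdot x \ge \min_{x\in S} y\cdot x$, while both differences equal $w$; hence $C'$ attains the same $y$-maximum and $y$-minimum as $S$ for each realizing direction. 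As $v_0\notin C'$, it follows that $v_0$ is never the \emph{unique} maximizer nor the unique minimizer of $y\cdot x$ over $S$, for any $y\in Y$. Consequently, small movements of $v_0$ do not decrease $\max_{a,b\in S}y\cdot(a-b)$ for any realizing $y$.

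Next I would exploit this redundancy to build a width-increasing perturbation. Parametrize nearby simplices by perturbing the vertices $v_i\mapsto v_i+\delta_i$. For each $y\in Y$, the first-order change of $\max_{a,b}y\cdot(a-b)$ is governed only by the perturbations of the vertices attaining the $y$-extrema; the radial expansion $\delta_i=v_i-c$ about an interior point $c$ raises every such quantity by $w$, so a width increase is possible once the hollowness constraint is dropped. Because $v_0$ is width-redundant, its perturbation $\delta_0$ does not enter these first-order conditions, so $v_0$ may be displaced essentially freely. The plan is then to combine an outward expansion in the realizing directions, which strictly raises $w$, with a retraction of the corner at $v_0$, so that the resulting simplex $S_\delta$ still satisfies $\wdt_\Lambda(S_\delta)>w$.

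The crux, and the step I expect to be the main obstacle, is to choose this perturbation so that $S_\delta$ remains \emph{hollow}. Since $S$ is a local maximum, a naive expansion forces boundary lattice points into the interior; the binding constraints are exactly the lattice points lying in the relative interiors of the facets, which forbid those facets from moving outward. The task is therefore to show that the cone of width-increasing perturbations meets the cone of hollowness-preserving ones, and here the redundant vertex $v_0$ is essential: retracting the corner at $v_0$ removes, or at least slackens, the facet constraints incident to $v_0$, decoupling them from the facets that must move outward to raise the width. Carrying this out requires a careful analysis of which facets carry binding lattice points and of how the displacement of $v_0$ can absorb the outward motion elsewhere while keeping every boundary lattice point on $\partial S_\delta$ or outside $S_\delta$. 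Once such a $\delta$ is produced with $\|\delta\|$ arbitrarily small, $S_\delta$ is a hollow simplex near $S$ with $\wdt_\Lambda(S_\delta)>w$, contradicting that $S$ is a local maximum and completing the contrapositive.
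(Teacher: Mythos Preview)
Your overall setup matches the paper: argue by contraposition, locate a vertex $v_0$ that is never the unique extremizer for any width direction, and use this redundancy to build a nearby hollow simplex of strictly larger width. The sandwich argument you give is correct (the paper extracts the same conclusion directly from Proposition~\ref{prop:reduced_exposed}).

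The genuine gap is the hollowness step, which you flag as ``the main obstacle'' but do not resolve. Your plan is to perturb $S$ itself, pushing some vertices $v_1,\dots,v_d$ outward while retracting $v_0$. The difficulty is that at a local maximizer one expects lattice points in the relative interiors of the facets of $S$, and in particular the facet $F_0=\conv\{v_1,\dots,v_d\}$ opposite $v_0$ is entirely unaffected by moving $v_0$. Any outward motion of the other vertices moves $F_0$ and risks pushing such a lattice point into the interior; your proposed ``decoupling'' does not address $F_0$ at all. The claim that ``the cone of width-increasing perturbations meets the cone of hollowness-preserving ones'' is exactly what needs proof, and the sketch you give does not establish it.

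The paper sidesteps this entirely with a clean two-step construction that you are missing. First, using Lemma~\ref{lemma:pull}, pull $v_0$ along a segment toward an interior point of $S$ to get $T=\conv\{v_0',v_1,\dots,v_d\}\subset S$ with $v_0'\in\inter S$ and $\wdt_\Lambda(T)=\wdt_\Lambda(S)$, as close to $S$ as you like. Since $v_0'\in\inter S$, every facet of $T$ through $v_0'$ has its relative interior contained in $\inter S$, hence is lattice-free because $S$ is hollow. Second, push out one such facet $F_d$ by replacing $a_d\cdot x\le b_d$ with $a_d\cdot x\le b_d+\delta$. For a simplex this is a homothety of $T$ about the opposite vertex $v_d$ with ratio $\mu>1$, so the width strictly increases; and since $\relint F_d\cap\Lambda=\emptyset$, discreteness gives hollowness for small $\delta$. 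No analysis of which original facets of $S$ carry lattice points is needed.
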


Here, the term ``local'' refers to the Hausdorff-distance between compact subsets of $\RR^d$. 
Let $\flt_s(d)$ denote the maximum lattice width of a hollow simplex in $\RR^d$. According to Theorem \ref{thm:local}, there are no two realizers $S$ and $T$ of $\flt_s(d)$ such that $S\subsetneq T$. Thus, a realizer $T$ of $\flt_s(d)$ is not only reduced, but also {inclusion-maximal hollow}.  Lov\'asz observed \cite{lovasz_2d} that being inclusion-maximal hollow implies that there exists a lattice point $x_F$ within each facet $F$ of $T$ (see also \cite{Averkov2013}). We prove a similar
criterion for reducedness (cf.\ Proposition \ref{prop:reduced_exposed}): there is a width-realizing direction $y_v\in\ZZ^d$ for each vertex $v$ such that $v$ is the unique vertex maximising the width in this direction. 
Investigating the interplay of the vectors $x_F$ and $y_v$ might yield more restrictive structural results as can be obtained by considering reducedness or inclusion-maximal-hollowness on their own and could lead to a classification of the potential realizers of $\flt_s(d)$. This turns the quest for realizers of $\flt_s(d)$ into an optimization problem. In the plane, this program can be carried out successfully without much effort, as we discuss in Remark \ref{rem:hurkens}.

If Theorem \ref{thm:local} can be extended to \emph{all} convex bodies, then this approach can also be used to tackle $\flt(d)$ itself. However, the question whether any realizer of $\flt(d)$ is necessarily reduced is still open. Studying convex bodies that are simultaneously reduced and inclusion-maximal hollow might also be helpful towards answering this question.

In this paper we have the goal of studying lattice reduced convex bodies, with one eye towards the flatness constant and the other toward comparing these to the analogous Euclidean notion. Indeed,
the definition of lattice reduced owes name and inspiration to the analogue notion of reducedness with respect to \emph{Euclidean width}. The Euclidean width 
$\wdt_\RR(C)$ of a convex body $C$ is defined as the minimum Euclidean distance between two parallel supporting hyperplanes of $C$, i.e.,
\begin{equation}
\label{eq:euclidean_width}
\wdt_\RR(C) = \min_{u\in \mathbb{S}^{d-1}} \max_{a,b\in C} u\cdot (a-b),
\end{equation}
where $\mathbb{S}^{d-1}$ denotes the Euclidean unit sphere centered at the origin in $\RR^d$.
%
Just as in the lattice setup, $C$ is then called \emph{Euclidean reduced} if there is no convex body $C^\prime\subsetneq C$ with $\wdt_\RR(C^\prime)=\wdt_\RR(C)$. 
%

In the Euclidean world, reducedness is often studied alongside the notion of \emph{completeness}. 
The \emph{Euclidean diameter} $\diam_\RR(C)$ of a convex body $C$ is defined as the length of a longest line segment contained in $C$, and $C$ is called  \emph{Euclidean complete} if there is no convex body $C^\prime\supsetneq C$ with $\diam_\RR(C^\prime) = \diam_\RR(C)$. 
It is well-known that $C$ is complete if and only if $C$ is a so-called body of constant width.
This implies that a Euclidean complete convex body is in particular Euclidean reduced. We refer to the survey \cite{reduced_survey} for a modern overview on reduced convex bodies in the Euclidean setting and to the book \cite{complete_book} for extensive background on complete convex bodies.

The strong connection between the notions of reducedness and completeness in the Euclidean setting motivates us to define the analogue notion of a \emph{lattice complete} convex body and to study the properties of lattice reduced and lattice complete convex bodies in parallel. To define lattice completeness, we need a discrete analogue of diameter. We first need to measure the length of segments with respect to the lattice $\Lambda$: We call a segment $I = [a,b]\subset\RR^d$ a \emph{rational segment} with respect to $\Lambda$, if $b-a\in \spann\{x\}$ for some $x\in\Lambda$. If $I$ is a rational segment, then there exists a unique vector $v_I\in \Lambda$, which generates $\Lambda \cap \spann\{b-a\}$ and is a positive multiple of $b-a$. We call $v_I$ the primitive direction of $I$ in $\Lambda$. The \emph{lattice length} of a rational segment $I$ is now defined as
\[
\Vol_1(I) = \frac{|b-a|}{|v_I|}.
\]
Here, $| v |$ denotes the standard Euclidean norm of a vector $v\in\RR^d$.
So in other words, $\Vol_1(I)$ is the Euclidean length of the segment $I$ normalized by the length of its primitive direction. The \emph{lattice diameter} of 
a convex body $C$ with respect to $\Lambda$ can now be defined as
\begin{equation}
\label{eq:lattice_diameter}
\diam_\Lambda(C) = \max\{\Vol_1(I) \colon I\subset C\text{ rational segment}\}.
\end{equation}
The lattice diameter $\diam_\Lambda(C)$ was used in \cite{generalised_flatness} to study generalizations of the flatness constant in the plane and in \cite{iglesias_santos} to classify 4-simplices without interior lattice points. It also occurs in \cite{coveringminima}, where it is expressed in terms of the first minimum of $C-C$ (see also \ Section \ref{sec:background}). 
With the notion of lattice diameter at hand, the definition lattice complete bodies is analogous to the Euclidean case.
\begin{definition}
A convex body $C\subset\RR^d$  is called \emph{lattice complete} with respect to $\Lambda$, if there is no $C^\prime\supsetneq C$ with $\diam_\Lambda(C^\prime) = \diam_\Lambda(C)$.
\end{definition}

In the plane, reduced lattice polygons have been studied independently by Cools and Lemmens in \cite{coolslemmens}, and a slightly different notion of completeness was investigated by B\'ar\'any and F\"{u}redi \cite{baranyfuredi}. 
We also note that reducedness and completeness have been studied in various other non-Euclidean settings, such as spherical geometry and hyperbolic geometry (see, e.g., \cite{spherical_reduced, spherical_complete, sagmeister, leichtweiss, bezdek}), or Minkowski spaces \cite{minkowski_space, minkowski_space2, gonzalez_et_al}.

In Figure \ref{fig:quad}, lattice reduced and complete polygons are illustrated. Note that the square in Figure \ref{fig:quad} (a) shows that, in contrast to the Euclidean setting, completeness does not imply reducedness. For further and higher-dimensional examples, we refer to Section \ref{ssec:ex}.

\begin{figure}[!htb]
    \centering
    \subfigure[The square  is not reduced with respect to $\ZZ^2$, since it contains the dashed diamond, which has the same lattice width. The diamond is lattice reduced.]{
    \includegraphics[width = .28\textwidth]{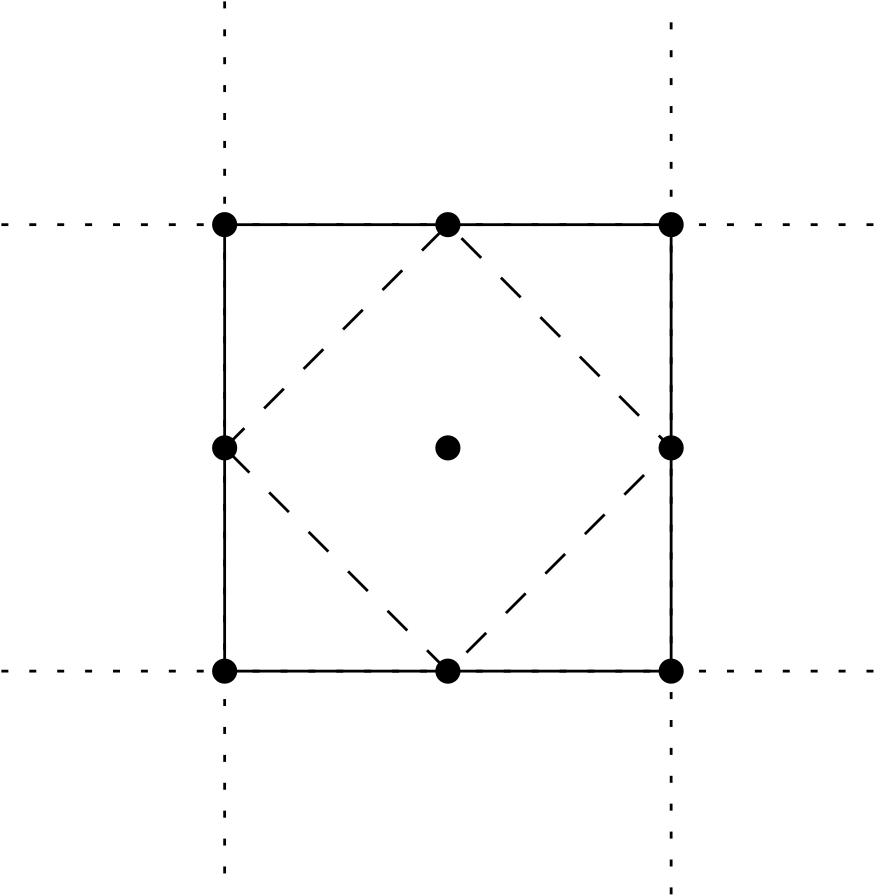}
    }
    \hspace{.02\textwidth}
    \subfigure[The diamond is not complete with respect to $\ZZ^d$, since it is contained in the dashed square, which has the same lattice diameter. The square is lattice complete.]{
    \includegraphics[width = .28\textwidth]{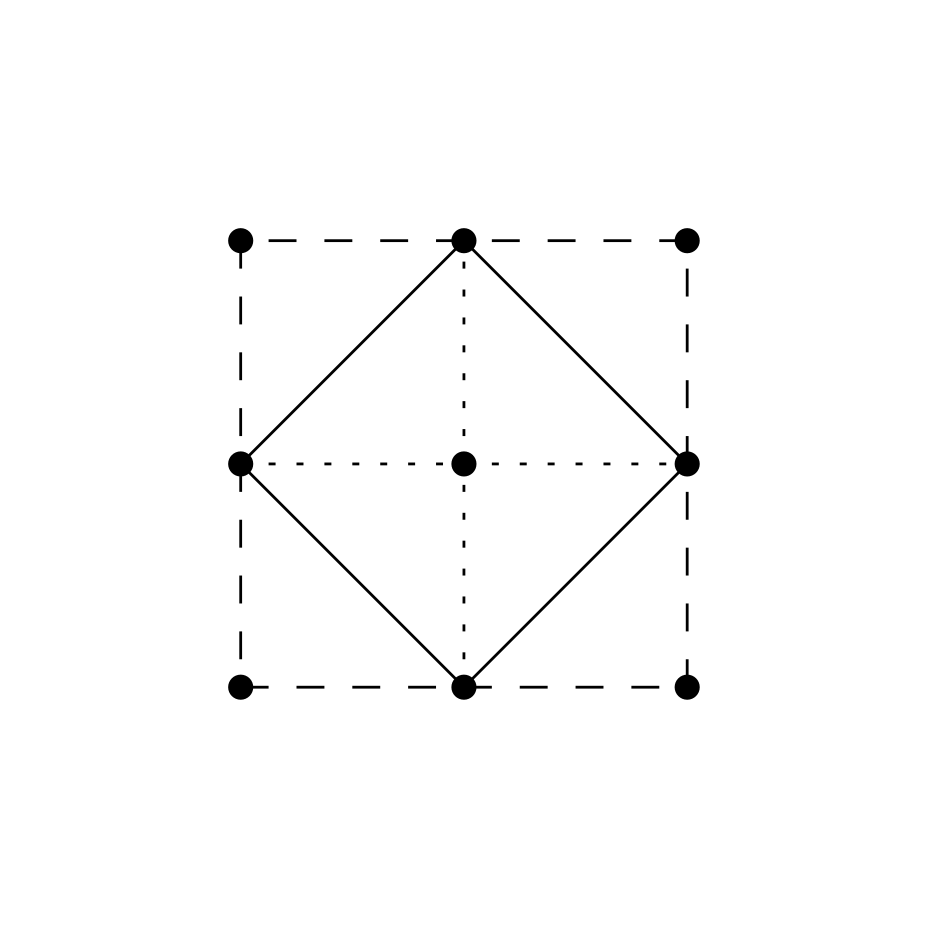}
    }
    \hspace{.02\textwidth}
    \subfigure[The ``skew square'' is simultaneously reduced and complete.]{
    \includegraphics[width = .28\textwidth]{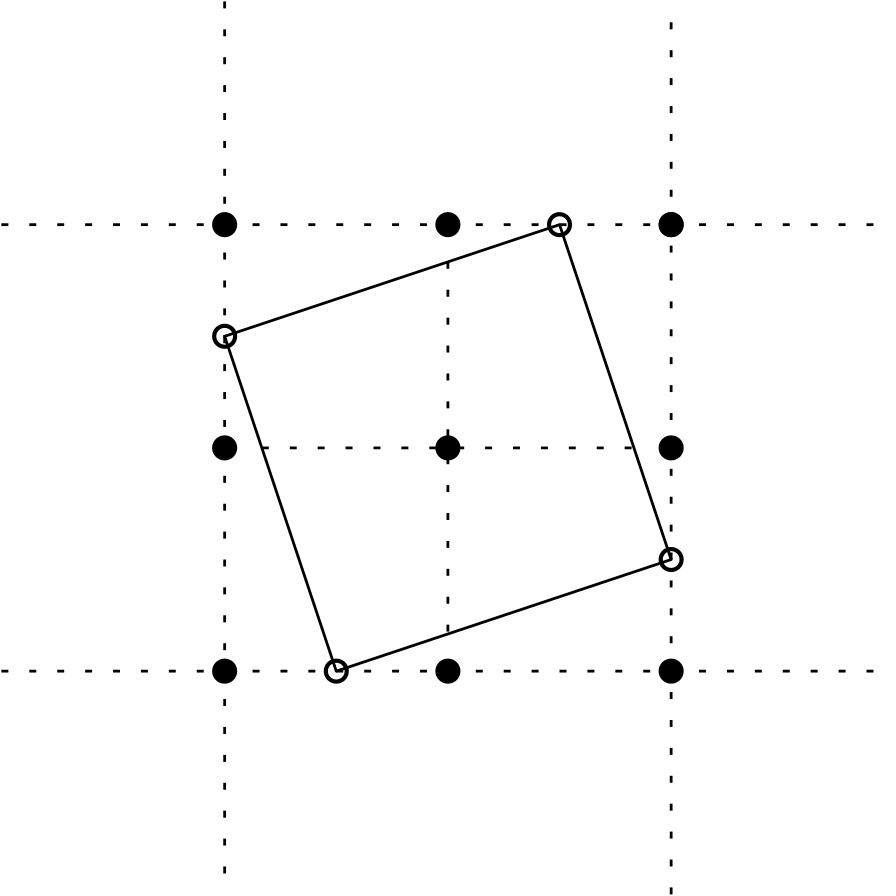}
    }
    \caption{Three examples of reduced or complete quadrilaterals in the plane (cf.\ Example \ref{ex:dim2})}
    \label{fig:quad}
\end{figure}


A main focus of the paper concerns the structure of both lattice reduced and lattice complete convex bodies.

\begin{theorem}
\label{thm:polytopes}
Let $\Lambda\subset\RR^d$ be a $d$-dimensional lattice and let $C\subset\RR^d$ be a convex body.
\begin{enumerate}
\item If $C$ is lattice reduced with respect to $\Lambda$, then $C$ is a polytope with at most $2^{d+1}-2$ vertices.
\item If $C$ is lattice complete with respect to $\Lambda$, then $C$ is a polytope with at most $2^{d+1}-2$ facets. 
\end{enumerate}
In both statements, the bound $2^{d+1}-2$ cannot be improved.
\end{theorem}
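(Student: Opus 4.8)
The plan is to prove the two statements by a duality/polarity argument that reduces (2) to (1), and then to establish (1) directly through a careful analysis of what happens when a lattice-width-realizing direction is ``lost'' under shrinking. Throughout, write $C$ for a lattice reduced body and let $w = \wdt_\Lambda(C)$. For each dual vector $y \in \Lambda^\star \setminus \{0\}$ realizing the width, i.e.\ with $\max_{a,b \in C} y\cdot(a-b) = w$, the body is squeezed between the two parallel supporting hyperplanes $\{x : y\cdot x = \max_{a\in C} y\cdot a\}$ and $\{x : y\cdot x = \min_{a\in C} y\cdot a\}$. The key structural observation I would isolate as a lemma is that the set $Y$ of (primitive) width-realizing directions, together with the contact structure of $C$ with the corresponding slabs, must be rich enough to prevent any proper sub-body from retaining width $w$: if there were ``slack'', one could shave off a piece of $C$ while keeping all width directions intact, contradicting reducedness.

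\emph{Step 1 (reducedness forces a finite certificate of width directions).} First I would show that a lattice reduced $C$ must have width realized simultaneously by a collection of directions $y_1, \dots, y_m \in \Lambda^\star$ whose associated slabs $\{x : \min_{C} y_i\cdot x \le y_i \cdot x \le \max_C y_i\cdot x\}$ have intersection equal to $C$ itself. Indeed, if the intersection $P$ of all width-realizing slabs strictly contained $C$ were larger, or if $C$ were strictly smaller than this intersection, then one could construct a proper sub-body still meeting every slab at full width, contradicting reducedness; this pins down $C$ as exactly the intersection of its width slabs, hence a polytope. The heart of the matter is the quantitative count: I would argue that each vertex of $C$ must lie on the top or bottom face of a width slab in an ``irredundant'' way, and that only $O(2^d)$ such vertices can be supported.

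\emph{Step 2 (the vertex count via a sign-vector / orthant argument).} To get the exact bound $2^{d+1}-2$, I would assign to each vertex $v$ of $C$ the sign pattern recording, for each width direction $y_i$, whether $v$ achieves the maximum or the minimum of $y_i$ over $C$ (or neither). The reducedness condition should force, for each vertex, a certificate lying in a suitable ``face fan'' or normal-cone structure; distinct vertices must then carry distinct and incompatible sign data. Counting the admissible nonzero sign patterns in $\{+,-\}$ over a spanning family, while excluding the two patterns that would correspond to degenerate global extrema, yields $2\cdot(2^d - 1) = 2^{d+1} - 2$. This combinatorial counting step is where I expect the real work — and the main obstacle — to lie: making precise why each vertex needs its own sign pattern and why exactly two patterns are forbidden, rather than an off-by-a-factor bound.

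\emph{Step 3 (completeness by polarity, and sharpness).} For statement (2), I would exploit the duality between width and diameter already hinted at in the excerpt (the relation $\diam_\Lambda(C)$ to the first minimum of $C - C$, and the Voronoi-cell polarity in Proposition~\ref{prop:voronoi}): a lattice complete body with $k$ facets should correspond, under an appropriate polar/support transform with respect to $\Lambda$ and $\Lambda^\star$, to a lattice reduced body with $k$ vertices, transporting the bound $2^{d+1}-2$ on facets directly from the vertex bound in (1). Finally, for sharpness I would exhibit an explicit family—a natural candidate being a suitably perturbed cross-polytope or ``skew cube'' adapted to $\ZZ^d$—whose width is realized by exactly $2^d - 1$ independent-enough directions forcing all $2^{d+1}-2$ vertices to be necessary, and dually a polytope attaining the facet bound. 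Verifying that this extremal example is genuinely reduced (no proper sub-body preserves the width) is a routine but essential check that completes the ``cannot be improved'' clause.
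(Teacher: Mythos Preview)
Your proposal has several genuine gaps.

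\textbf{Step 1 is based on a false claim.} You assert that a reduced body equals the intersection of its width-realizing slabs. This is not true: the simplex $S_d=\conv\{-\eins,e_1,\dots,e_d\}$ is reduced with width directions $\pm e_1,\dots,\pm e_d$, yet the intersection of the corresponding slabs is the cube $[-1,1]^d$, which strictly contains $S_d$. The paper's route is different: it shows (Proposition~\ref{prop:reduced_exposed}) that each exposed point of a reduced body is \emph{uniquely} selected by some width direction, and since width directions are finite (they are the nonzero lattice points in $\wdt_\Lambda(C)(C-C)^\star$), $C$ has finitely many exposed points and is therefore a polytope.

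\textbf{Step 2 misses the actual counting mechanism.} Your sign-vector sketch does not explain why distinct vertices receive distinct patterns, nor why exactly two patterns are excluded. The paper's argument is quite different: to each vertex $v_i$ it associates the width direction $y_i\in\Lambda^\star$ lying in $\inter(\ncone(v_i;C))$. These $y_i$ are boundary lattice points of $P=\wdt_\Lambda(C)(C-C)^\star$, which has $\inter P\cap\Lambda^\star=\{0\}$, and no two $y_i$ share a facet of $P$. The bound then comes from a parity argument (Lemma~\ref{lemma:3}): if $y_i\equiv y_j\pmod{2\Lambda^\star}$ then $\tfrac12(y_i+y_j)\in\inter P\cap\Lambda^\star$ unless $y_j=-y_i$, and no $y_i$ lies in $2\Lambda^\star$. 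So at most two $y_i$ per nonzero coset of $\Lambda^\star/2\Lambda^\star$, giving $2(2^d-1)$.

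\textbf{Step 3 does not work.} The polarity you invoke only relates reduced and complete bodies in the origin-symmetric case (Theorem~\ref{prop:duality}); the paper explicitly notes that no such transfer is known in general. Part~(2) is proved by a parallel but independent argument (Theorem~\ref{thm:complete_meta}): one shows that each facet of a complete body contains the endpoint of a diameter segment in its relative interior (Proposition~\ref{prop:endpoints}), associates to each facet a diameter direction $v_F\in\Lambda$ on $\bd(C-C)$, checks that no two $v_F$ share a facet of $C-C$, and applies the same parity lemma.

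\textbf{Sharpness} is witnessed not by a perturbed cross-polytope but by the Voronoi cell of $A_d^\star$, which is a permutohedron with exactly $2^{d+1}-2$ facets; its polar handles the reduced case.
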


Indeed, the bound $2^{d+1}-2$ is attained by the Voronoi cell of the lattice $A_d^\star$ and its polar body, respectively.

This result contrasts the Euclidean situation: A Euclidean complete convex body is always strictly convex and thus never a polytope \cite[Proposition 2.1]{oudet}. Euclidean reduced polygons exist for $d=2$, for instance in the form of regular $k$-gons, where $k$ is odd. For $d>2$, constructing a Euclidean reduced polytope is a non-trivial problem. It was solved affirmatively by Gonz\'{a}lez Merino et al.\ in \cite{Merino2018} for $d=3$, but it is open for $d>3$.

Moreover, we give a sharp lower bound on the dimension of the directions that realize the lattice width (diameter) of a lattice reduced (complete) convex body. 

\begin{theorem}
\label{thm:dimension}
Let $\Lambda\subset\RR^d$ be a $d$-dimensional lattice and let $C\subset\RR^d$ be a convex body.
\begin{enumerate}
\item If $C$ is lattice reduced with respect to $\Lambda$, then the vectors $y\in \Lambda^\star$ that realize the minimum $\wdt_\Lambda(C)$ span a linear space of dimension at least $c\cdot\log(d)$, where $c>0$ is a universal constant.
\item If $C$ is lattice complete with respect to $\Lambda$, then the vectors $v\in\Lambda$ for which there exists a rational segment parallel to $v$ achieving $\diam_\Lambda(C)$ span a linear space of dimension at least $c\cdot \log(d)$, where $c>0$ is a universal constant.
\end{enumerate}
In both statements, the order of magnitude $\log(d)$ is best-possible.
\end{theorem}

Theorems \ref{thm:polytopes} and \ref{thm:dimension} suggest that there might be a duality between lattice reduced and lattice complete convex bodies. In Theorem \ref{prop:duality} we show that indeed an origin-symmetric convex body $C$ is lattice reduced with respect to some lattice $\Lambda$ if and only if its polar body $C^\star$ is lattice complete with respect to $\Lambda^\star$. However, there is most likely no such correspondence in the non-symmetric case (cf.\ Remarks \ref{rem:impl1}, \ref{rem:realizers} and  \ref{rem:3simplex}).  

A curious consequence of the complete classification of lattice reduced and complete triangles in the plane up to unimodular equivalence (cf.\ Section \ref{classification_triangles}) is the following theorem.

\begin{theorem}
\label{thm:triangles}
Let $T\subset\RR^2$ be a triangle and let $\Lambda\subset\RR^2$ be a 2-dimensional lattice. If $T$ is lattice complete with respect to $\Lambda$, then $T$ is lattice reduced with respect to $\Lambda$. The converse does not hold.
\end{theorem}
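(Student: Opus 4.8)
The statement is advertised as a corollary of the explicit classification carried out in Section~\ref{classification_triangles}, so the plan is to produce unimodular normal forms for both families and then compare them. Since $\wdt_\Lambda$, $\diam_\Lambda$, and hence both the reduced and the complete properties are invariant under the action of $\GL_d(\ZZ)$ on the pair $(T,\Lambda)$, I would first normalize $\Lambda=\ZZ^2$ and reduce every triangle to a unimodular normal form, e.g.\ placing one vertex at the origin and using integer column operations to constrain the remaining two.

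For the classification of complete triangles I would exploit that, in the plane, the longest chord of a triangle $T$ parallel to a primitive direction $v\in\ZZ^2$ passes through a single vertex (the middle one in the ordering transverse to $v$) and runs to the opposite edge; writing $\ell_v(T)$ for its Euclidean length, one has $\diam_\Lambda(T)=\max_{v}\ell_v(T)/|v|$. Completeness says that no external point can be adjoined without lengthening some normalized chord: for every $p\notin T$ the hull $\conv(T\cup\{p\})$ must realize a strictly larger normalized chord than $D:=\diam_\Lambda(T)$. I would encode ``$p$ keeps $\ell_v\le D|v|$ for all $v$'' as finitely many systems of linear inequalities, one per diameter-realizing direction and per combinatorial type of the enlarged hull, demand that their common solution set equal $T$ itself, and solve the resulting conditions on the vertex coordinates up to unimodular equivalence. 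This should yield a short list of normal forms, among which the representative $S_2=\conv\{-\eins,e_1,e_2\}$ of Proposition~\ref{prop:terminal} appears.

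For the reduced triangles I would run the analogous analysis: as in the verification that $\conv\{0,e_1,e_2\}$ is reduced, $T$ is reduced precisely when each of its three vertices is the \emph{unique} extremizer of some width-realizing functional $y\in\Lambda^\star$, for then every equal-width subset must retain all three vertices and hence coincide with $T$. Classifying the triangles satisfying this pinning condition gives a finite list, and the theorem follows from the bookkeeping observation that every complete normal form occurs in the reduced list. For the failure of the converse I would exhibit $T=\conv\{0,e_1,e_2\}$: the pinning argument above shows it is reduced, yet the unit square $[0,1]^2\supsetneq T$ has the same lattice diameter $1$, so $T$ is not complete. I expect the completeness classification to be the main obstacle: unlike the width, the diameter is governed by chords through interior-facing vertices, so ruling out all possible enlargements requires a careful case split over which directions are diameter-realizing and over the combinatorial type of the enlarged hull, and in dimension two the lower bound on the number of realizing directions from Theorem~\ref{thm:dimension} provides only weak assistance.
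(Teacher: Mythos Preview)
Your proposal is a plan rather than a proof, and two of its expectations are off in ways that would block the argument.

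First, the outcome of the classification is not a ``short list of normal forms'': up to unimodular equivalence and scaling, the reduced triangles form a genuine two-parameter family $T_{xy}=\conv\{(-1,-1)^T,(x,1)^T,(1,y)^T\}$ with $x+y\le 0$, $x,y\in[-1,1]$ (Proposition~\ref{prop:reduced_triangles}), and the complete triangles form a one-parameter subfamily $T_x=T_{x,-x}$ with $x\in[0,1)$ (Proposition~\ref{prop:triangle_classification}). So ``solve the resulting conditions'' cannot terminate in a finite enumeration; you need to parametrize and then argue an inclusion of parameter sets.

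Second, your approach to completeness via ``no external $p$ can be adjoined'' is the raw definition and is hard to make finite: you would have to control infinitely many combinatorial types of $\conv(T\cup\{p\})$ and, for each, infinitely many candidate primitive directions. The paper instead uses the facet criterion of Proposition~\ref{prop:endpoints} together with Proposition~\ref{prop:complete_simplex}: a triangle is complete iff each vertex is joined by a diameter segment to the relative interior of the opposite edge. This immediately gives two independent diameter directions, which after a unimodular change can be taken to be $e_1,e_2$; Lemma~\ref{lemma:disjoint_segments} forces the two segments to intersect, and Lemma~\ref{lemma:diagonal_point} pins the third diameter direction to $(1,1)^T$. The crux you do not anticipate is then a monotonicity computation: with one vertex coordinate $x$ fixed, the lattice length of the diagonal diameter segment is strictly monotone in the remaining free coordinate, so exactly one value yields completeness, and it matches the reduced family $T_x$. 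Without this uniqueness step the inclusion ``complete $\Rightarrow$ reduced'' is not established.

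Your reduced side is essentially right and coincides with the paper's use of Proposition~\ref{prop:reduced_exposed}, and your counterexample $\conv\{0,e_1,e_2\}$ for the failure of the converse is exactly the one used (Example~\ref{ex:dim2}(1)).
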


This result echoes the fact that the class of Euclidean complete bodies is properly contained in the class of Euclidean reduced bodies. It further supports the idea that there is no duality between arbitrary lattice reduced and lattice complete convex bodies. We can show however that Theorem \ref{thm:triangles} neither extends to higher-dimensional simplices (cf.\ Remark \ref{rem:3simplex}), nor to general 2-dimensional convex bodies (cf.\ Example \ref{ex:dim2}).

The paper is organized as follows: In Section \ref{sec:background}, we introduce the necessary tools from convex geometry and the geometry of numbers that are needed for our investigations. 
In Section \ref{sec:struct}, we study lattice reduced and complete convex bodies in general. This will lead to the proofs of Theorems \ref{thm:polytopes} and \ref{thm:dimension}, as well as to several examples of reduced and complete convex bodies. Moreover, we discuss the special role of origin-symmetric convex bodies and study the behaviour of reduced and complete convex bodies with respect to standard operations. We finally show that any convex body $C\subset\RR^d$ contains (is contained in) a lattice reduced (complete) body of the same lattice width (diameter). This fact follows rather easily in the Euclidean setting with the help of Zorn's lemma, but in the discrete setting, more care is needed. In Section \ref{sec:simplex}, we turn our eye on reduced and complete simplices, proving Theorems \ref{thm:triangles} and \ref{thm:local}. We then construct a family of complete tetrahedra in Section \ref{sec:simplex} that serves as an interesting source of (counter-)examples. To conclude, in Section \ref{sec:euclid}, we consider lattice analogs of various theorems and conjectures on reduced and complete bodies in the Euclidean theory. The appendix contains a proof for the existence of hollow convex bodies with maximal lattice width.

For the sake of brevity, in the remainder of the paper, the terms ``width'', ``diameter'', ``reduced'' and ``complete'' are to be understood as ``lattice width'' etc, except in Section \ref{sec:euclid} where we always specify lattice- or Euclidean- to avoid confusion.

\tableofcontents

\section{Preliminaries}
\label{sec:background}
For a vector $x\in\RR^d\setminus\{0\}$ we denote by $x^\perp$ the hyperplane orthogonal to $x$ passing through the origin and by $|x|$ its Euclidean length. For two vectors $x,y\in\RR^d$, we let $x\cdot y$ be the standard inner product. The Minkowski sum of two non-empty sets $A,B\subseteq\RR^d$ is denoted by $A+B = \{a+b:a\in A, b\in B\}$ and for a number $\lambda\in\RR$, we write $\lambda A = \{\lambda a: a\in A\}$ and $-A = (-1)A$. The convex hull of a set $A\subset\RR^d$ is denoted by $\conv A$. If $A=\{x,y\}$, we write $[x,y]=\conv A$ for the segment joining $x$ and $y$. The affine hull of $A$ is denoted by $\aff A$. The interior, the closure and the boundary of $A$ with respect to the standard topology in $\RR^d$ are denoted by $\inter A$, $\cl A$ and $\bd A$ respectively and we write, for instance, $\relint A$ for the interior of $A$ with respect to the trace topology on $\aff A$. Moreover, for $n\in\NN$, we write $[n]=\{1,\dots,n\}$. For $i\in [d]$, let $e_i$ denote the $i$th standard basis vector of $\ZZ^d$ and we let $C_d = [-1,1]^d$ be the standard origin-symmetric cube. Finally, we write $\eins = e_1 + \cdots + e_d$ for the all-1-vector.

\subsection{Convex Geometry} 
Here we recall the basic terms and facts from convex geometry that are necessary for the understanding of the following sections. For a thorough read, we refer to the books \cite{gruber_cvx_geom, schneider}, as well as \cite{ziegler} for polytope theory.

A \emph{convex body} $C\subset\RR^d$ means a convex compact set with non-empty interior. A \emph{face} $F\subseteq C$ is a convex subset of $C$ such that for any $x,y\in C$ with $[x,y]\cap F\neq \emptyset$, we have $x,y\in F$. A face is called \emph{proper}, if neither $F$, nor $C\setminus F$ are empty. An \emph{extreme point} is a 0-dimensional face, the set of extreme points of $C$ is denoted by $\ext (C)$.

A hyperplane $H\subset\RR^d$ is said to \emph{support} a convex body $C$, if $C\cap H\neq \emptyset$ and $C$ is contained in one of the two closed halfspaces determined by $H$. If $H$ is a supporting hyperplane of $C$, the set $F=C\cap H$ is called an \emph{exposed face} of $C$. Any exposed face is indeed a face, but the converse is not true. A 0-dimensional exposed face is called \emph{exposed point} and the set of exposed points is denoted by $\expo( C)$. We have $\cl(\expo(C)) = \ext C$.

For a vector $v\in\RR^d$, the \emph{support function} of $C$ is defined as 
$
h(C;v) = \max_{v\in C} x\cdot v.
$
In fact, if $v\neq 0$, the hyperplane $\{x\in\RR^d \colon x\cdot v = h(C;v)\}$ is a supporting hyperplane of $C$.

For a convex set $K\subseteq\RR^d$, its \emph{polar set} is defined as
\[
K^\star =\{y\in\RR^d\colon x\cdot y\leq 1,~\forall x\in K\}.
\]
There is a variety of nice properties for this polarity operation \cite[Section 1.6.1]{schneider}. Here we want to highlight that $K^\star$ is bounded if and only if $0\in\inter K$. In particular, for a convex body $C$ which contains the origin as an interior point, $C^\star$ is a convex body with $0\in\inter C^\star$.

We measure the distance between two convex bodies with respect to the \emph{Hausdorff distance}, i.e., we have
\[
\hausdorff(C,C^\prime) = \min\{ r\geq 0\colon C\subseteq C^\prime + rB\text{ and }C^\prime\subseteq C+rB\},
\]
for any $C,C^\prime\subset \RR^d$, where $B\subset\RR^d$ denotes the Euclidean unit ball. Indeed, $d(\cdot,\cdot)$ turns the set of convex bodies in $\RR^d$ into a metric space.

A \emph{polytope} $P\subset\RR^d$ is
the convex hull of a finite set $A\subset\RR^d$, or, equivalently, a bounded intersection of closed half-spaces.
The extreme points of a polytope are commonly called \emph{vertices} and its $(\dim(P)-1)$-dimensional faces are called \emph{facets}. For a face $F\subseteq P$, the \emph{normal cone} of $F$ is defined as 
\[
\ncone(F;P) = \{y\in\RR^d \colon \forall x\in F, x^\prime\in P,~x\cdot y \geq x^\prime \cdot y\}.
\]
The \emph{normal fan} $\normalfan(P)$ of $P$ is the collection of all normal cones of $P$. $\normalfan(P)$ is a complete fan, i.e., the union of all normal cones of $P$ is $\RR^d$, any face of a normal cone is again a normal cone and any two normal cones intersect in a normal cone.

If $0\in\inter P$, then $P^\star$ is again a polytope and there is a 1-1 correspondence between $k$-faces of $P$ and $(d-1-k)$-faces of $P^\star$; For a $k$-face $F\subseteq P$, the set
\[
F^\diamond = \{y\in P^\star\colon x\cdot y =1,~\forall x\in F\}
\]
is a $(d-k-1)$-face of $P^\star$, the so-called \emph{polar face} of $F$. Here, the empty face is regarded as a $(-1)$-face. Another fan associated to a polytope $P$ with $0\in\inter P$ is the face fan. For a face $F\subseteq P$ one considers the positive hull
\[
\cone(F) = \{ \lambda x \colon x\in F,~\lambda\geq 0\}.
\]
The collection of these cones is the \emph{face fan} $\facefan(P)$. The face fan is connected to the normal fan via polarity; we have $\ncone(F;P) = \cone(F^\diamond)$ and so $\normalfan(P)=\facefan(P^\star)$.

\subsection{Geometry of Numbers} 
In this subsection we summarize tools we need from the geometry of numbers that help describe the interplay of convex bodies and lattices in $\RR^d$. Further background on the geometry of numbers can be found in the books \cite{geometryofnumbers, cassels}.

A \emph{lattice} $\Lambda\subset\RR^d$ is by definition a discrete subgroup of $\RR^d$. Equivalently, we may express $\Lambda$ as the integer span of linearly independent vectors $b_1,\dots,b_k\in\RR^d$, i.e.,
\[
\Lambda = \spann_{\ZZ}\{b_1,\dots,b_k\} = \Big\{\sum_{i=1}^k z_i b_i\colon z_1,\dots,z_k\in\ZZ\Big\}.
\]
The vectors $b_1,\dots,b_k$ are then called a \emph{basis} of $\Lambda$. Two bases  $B=(b_1,\dots,b_k)$ and $B^\prime = (b_1^\prime,\dots,b_k^\prime)$ of $\Lambda$ differ only by a \emph{unimodular transformation}, i.e., $B^\prime = BU$, for a matrix $U\in\ZZ^{k,k}$ with $|\det U |=1$. The group of unimodular transformations on $\RR^d$ is denoted by $\GL_d(\ZZ)$. The \emph{determinant} of a lattice $\Lambda$ can now be defined as $\det\Lambda = \sqrt{\det(B^T B)}$, where $B$ is a basis of $\Lambda$.

Subgroups of $\Lambda$ are called \emph{sublattices}. The \emph{dual lattice} of $\Lambda$ is defined as
\[
\Lambda^\star=\{y\in\spann\Lambda\colon x\cdot y\in\ZZ,~\forall x\in\Lambda\}.
\]
A vector in $x\in\Lambda$ is called \emph{primitive}, if $x\neq 0$ and the segment $[0,x]$ contains no lattice points of $\Lambda$ in its relative interior. More generally, a sublattice $\Lambda^\prime\subseteq\Lambda$ is called primitive, if $\Lambda^\prime = \spann_\RR(\Lambda^\prime)\cap \Lambda$. The primitive vectors $y\in\Lambda^\star$ are in 2-1 correspondence to the primitive $(\dim\Lambda-1)$-dimensional sublattices of $\Lambda$ via the map $y\mapsto \Lambda\cap y^\perp$. The vectors $y$ and $-y$ both lead to the same sublattice.

Three standard examples of lattices that will be of interest in the paper are the integer lattice $\ZZ^d$ and the root lattice $A_d = \{x\in\ZZ^{d+1}\colon x\cdot \eins = 0\}$, as well as its dual $A_d^\star$. We refer to the book \cite{conwaysloanebook} for a detailed treatment of special lattices.

When working with width and diameter, it will be convenient to write $\wdt(C;v) = h(C-C;v)$ for the width of $C$ in direction $v\in\RR^d\setminus\{0\}$. This way, we have $\wdt_\Lambda(C) = \min_y \wdt(C;y)$, where $y$ ranges over $\Lambda^\star\setminus\{0\}$.
Similarly, we write for $v\in\RR^d\setminus\{0\}$
\[
\diam(C;v) = \max\Big\{\frac{\vol_1(I)}{|v|}\colon I\subset C\text{ segment parallel to }v\Big\} 
\]
and find $\diam_\Lambda(C) = \max_v \diam(C;v)$, where $v$ ranges over $\Lambda\setminus\{0\}$.

An important parameter in the geometry of numbers is the first successive minimum of an origin-symmetric convex body $C\subset\RR^d$ with respect to a lattice. It is given by
\[
\sukz_1(C; \Lambda) = \min\big\{\lambda\geq 0\colon (\lambda C)\cap(\Lambda\setminus\{0\})\neq\emptyset\big\} = \max\big\{\lambda \geq 0\colon \inter(\lambda C)\cap \Lambda=\{0\}\big\}.
\]
It is well-known that the width and diameter of a convex body $C$ can be expressed in terms of the first minimum as stated in the following lemma (see, e.g., \cite[Section 1]{coveringminima} and \cite[Section 2.1]{iglesias_santos}). Its proof is a good exercise for anyone that wants to familiarize themselves with the definitions.

\begin{lemma}
\label{lemma:successive_minima}
Let $C\subset\RR^d$ be a convex body and let $\Lambda\subset\RR^d$ be a $d$-lattice. Then we have
\begin{enumerate}
\item $\wdt_\Lambda(C) = \sukz_1((C-C)^\star;\Lambda^\star)$ and
\item $\diam_\Lambda(C) = \sukz_1(C-C;\Lambda)^{-1}$.
\end{enumerate}
Moreover, the directions where the width (resp.\ diameter) of the convex body is acheived correspond to the non-trivial lattice points in $\sukz_1((C-C)^\star;\Lambda^\star)(C-C)^\star$ (resp.\ $\sukz_1(C-C;\Lambda)(C-C)$).
\end{lemma}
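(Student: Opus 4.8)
The plan is to reduce both identities to two elementary facts from the geometry of numbers: the duality between support and gauge functions, and the description of $\sukz_1$ as a minimal gauge over nonzero lattice vectors. Throughout I write $\|x\|_K = \min\{t\geq 0 : x\in tK\}$ for the gauge (Minkowski functional) of an origin-symmetric convex body $K$ with $0\in\inter K$; this is the norm whose unit ball is $K$. Unwinding the definition of the first successive minimum gives
\[
\sukz_1(K;\Lambda) = \min_{x\in\Lambda\setminus\{0\}}\|x\|_K ,
\]
and a nonzero $x\in\Lambda$ attains this minimum precisely when $\|x\|_K = \sukz_1(K;\Lambda)$, that is, exactly when $x$ is a nontrivial lattice point of $\sukz_1(K;\Lambda)\,K$. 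The second fact I need is the identity $h(K;y) = \|y\|_{K^\star}$, which follows in one line by rewriting $\|y\|_{K^\star}=\min\{t : y/t\in K^\star\}=\min\{t : x\cdot y\leq t,~\forall x\in K\}$.

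For part (2), the key geometric observation is that the chords of $C$ in a fixed direction are governed by the difference body $C-C$: for $v\in\RR^d\setminus\{0\}$ and $\ell\geq 0$ there is a segment $[a,b]\subset C$ with $b-a = \ell\,v/|v|$ if and only if $\ell\,v/|v|\in C-C$, simply because $b-a\in C-C$ for all $a,b\in C$ and, conversely, every element of $C-C$ arises as such a difference. Hence the longest chord of $C$ parallel to $v$ has Euclidean length $|v|/\|v\|_{C-C}$, and after normalizing by $|v|$ as in the definition of $\diam(C;v)$ I obtain $\diam(C;v)=\|v\|_{C-C}^{-1}$. Taking the maximum over $v\in\Lambda\setminus\{0\}$ and using $\diam_\Lambda(C)=\max_v\diam(C;v)$ together with the gauge description of $\sukz_1$ yields
\[
\diam_\Lambda(C) = \Bigl(\min_{v\in\Lambda\setminus\{0\}}\|v\|_{C-C}\Bigr)^{-1} = \sukz_1(C-C;\Lambda)^{-1}.
\]

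For part (1), I use $\wdt(C;y)=h(C-C;y)=\|y\|_{(C-C)^\star}$, so that minimizing over the dual lattice (which is where the width lives, by \eqref{eq:lattice_width}) gives
\[
\wdt_\Lambda(C) = \min_{y\in\Lambda^\star\setminus\{0\}}\|y\|_{(C-C)^\star} = \sukz_1\bigl((C-C)^\star;\Lambda^\star\bigr).
\]
The \emph{moreover} statement then drops out of the characterization of minimizers recorded above: the width-realizing directions are exactly the nonzero lattice points of $\wdt_\Lambda(C)\,(C-C)^\star=\sukz_1((C-C)^\star;\Lambda^\star)(C-C)^\star$, and the diameter-realizing directions are exactly the nonzero lattice points of $\sukz_1(C-C;\Lambda)(C-C)$.

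The computations are routine; the only point requiring care is the normalization in part (2) — keeping track of the factor $|v|$ and checking that dividing out the primitive direction in $\Vol_1$ is consistent with dividing by $|v|$ in $\diam(C;v)$, so that the maximum over $\Lambda\setminus\{0\}$ is in fact attained at primitive lattice vectors (for a non-primitive $v=kv_0$ one has $\|v\|_{C-C}=k\|v_0\|_{C-C}$, whence $\diam(C;v)<\diam(C;v_0)$). I expect this bookkeeping, rather than any genuine difficulty, to be the main thing to get right.
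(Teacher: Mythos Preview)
The paper does not actually prove this lemma; it is explicitly left as ``a good exercise for anyone that wants to familiarize themselves with the definitions.'' Your argument --- expressing $\sukz_1(K;\Lambda)$ as $\min_{x\in\Lambda\setminus\{0\}}\|x\|_K$, using the support/gauge duality $h(K;y)=\|y\|_{K^\star}$, and reading off chord lengths from the difference body --- is precisely the intended unwinding of definitions, and it is correct, including your remark on primitivity.

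One point worth flagging: your computation for part (1) produces $\wdt_\Lambda(C)=\sukz_1\bigl((C-C)^\star;\Lambda^\star\bigr)$, with the \emph{dual} lattice in the second argument, whereas the lemma as printed has $\Lambda$. Your version is the correct one: the width is a minimum over $y\in\Lambda^\star\setminus\{0\}$ by definition, and the paper itself uses $\Lambda^\star$ elsewhere (e.g.\ in Remark~\ref{rem:zorn}~(3) and in the proof of Theorem~\ref{prop:duality}). So the discrepancy is a typo in the statement, not an error in your argument.
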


We call these directions where the width (diameter) is achieved, i.e. lattice points in $\sukz_1((C-C)^\star;\Lambda^\star)(C-C)^\star$ (resp.\ $\sukz_1(C-C;\Lambda)(C-C)$) simply \emph{width (diameter) directions}. Since the first successive minimum is continuous, it follows immediately that the width and diameter directions of a convex body are stable with respect to small perturbations of $C$. More precisely, the following Lemma holds.
\begin{lemma}
    \label{lemma:continuity}
    Let $C\subset\RR^d$ be a convex body. Then there exists $\varepsilon>0$ such that for any convex body $C^\prime\subset\RR^d$ with $\hausdorff(C,C^\prime)<\varepsilon$ the width (diameter) directions of $C^\prime$ are contained in the width (diameter) directions of $C$.
\end{lemma}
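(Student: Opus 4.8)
The plan is to reduce the statement to a single upper-semicontinuity property for the set of minimal lattice vectors of a symmetric convex body, and then to apply that property to the two bodies $(C-C)^\star$ and $C-C$. By Lemma \ref{lemma:successive_minima}, the width directions of $C$ are exactly the nonzero lattice points of $\sukz_1((C-C)^\star;\Lambda)\,(C-C)^\star$, and the diameter directions are exactly the nonzero lattice points of $\sukz_1(C-C;\Lambda)\,(C-C)$. For a symmetric convex body $K$ with $0\in\inter K$ I would write $\|\cdot\|_K$ for its gauge and set
\[
M(K)=\{x\in\Lambda\setminus\{0\}\colon \|x\|_K=\sukz_1(K;\Lambda)\},
\]
so that the width (diameter) directions of $C$ are precisely $M((C-C)^\star)$ (resp.\ $M(C-C)$). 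Since $C\mapsto C-C$ and $C\mapsto (C-C)^\star$ are continuous in the Hausdorff distance, it then suffices to prove the abstract claim: for each such $K$ there is $\eta>0$ so that $\hausdorff(K,K')<\eta$ forces $M(K')\subseteq M(K)$. The continuity of the first map is immediate, and for the second I would note that all $C'-C'$ with $C'$ near $C$ contain a common ball about the origin, so their polars stay uniformly bounded and polarity is Hausdorff-continuous there.

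To prove the abstract claim I would first record a \emph{global, multiplicative} comparison of gauges: from $\hausdorff(K,K')<\eta$ together with $0\in\inter K$ one gets $K'\subseteq (1+c\eta)K$ and $K\subseteq (1+c\eta)K'$ for a constant $c=c(K)>0$ and all small $\eta$, whence $(1+c\eta)^{-1}\|x\|_K\le\|x\|_{K'}\le(1+c\eta)\|x\|_K$ for \emph{all} $x$, and in particular $\sukz_1(K';\Lambda)\le(1+c\eta)\sukz_1(K;\Lambda)$. Next I would exhibit a spectral gap: because $K$ is bounded, $\|\cdot\|_K$ is coercive, so only finitely many lattice vectors satisfy $\|x\|_K\le\sukz_1(K;\Lambda)+1$; those among them not lying in the finite set $M(K)$ have gauge strictly above $\sukz_1(K;\Lambda)$, so
\[
\mu:=\min\{\|x\|_K\colon x\in\Lambda\setminus\{0\},\ x\notin M(K)\}>\sukz_1(K;\Lambda).
\]
Finally, for any $x'\in M(K')$ the two gauge bounds give $\|x'\|_K\le(1+c\eta)\|x'\|_{K'}=(1+c\eta)\sukz_1(K';\Lambda)\le(1+c\eta)^2\sukz_1(K;\Lambda)$; choosing $\eta$ small enough that $(1+c\eta)^2\sukz_1(K;\Lambda)<\mu$ forces $\|x'\|_K<\mu$ and hence $x'\in M(K)$. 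Applying this to $K=(C-C)^\star$ and to $K=C-C$, and transporting the two resulting thresholds back through the continuity of $C\mapsto C-C$ and $C\mapsto(C-C)^\star$, produces the claimed $\varepsilon$.

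The one genuine subtlety, which I expect to be the main obstacle, is that there are infinitely many candidate lattice directions, so continuity of the width (diameter) in each fixed direction — whose modulus degrades as $|x|$ grows — does not by itself bound the perturbed minimizers. The device that removes this is precisely the global multiplicative gauge comparison above: it controls the ratio $\|x\|_{K'}/\|x\|_K$ \emph{uniformly} in $x$ rather than only on a bounded range, which is what allows the single gap $\mu-\sukz_1(K;\Lambda)$ to rule out all non-minimal directions simultaneously. A minor secondary check is the Hausdorff-continuity of polarity at $C-C$, which follows from the uniform lower bound on the inradius of the nearby difference bodies.
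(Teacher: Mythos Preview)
Your argument is correct and is precisely the fleshed-out version of what the paper has in mind: the paper does not give a proof at all but simply asserts that the lemma ``follows immediately'' from the continuity of the first successive minimum, and your gauge-comparison plus spectral-gap argument is exactly how one makes that assertion rigorous. The only comment is that your write-up is considerably more careful than the paper's one-line justification---in particular, your explicit identification of the gap $\mu-\sukz_1(K;\Lambda)$ and the uniform multiplicative gauge bound are the right way to handle the infinitely many candidate directions, which the paper leaves entirely implicit.
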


We finish the section by turning to the main focus of the paper, the new definitions of reduced and complete convex bodies, and collecting some first useful properties of these.

\begin{lemma}
\label{lemma:properties}
Let $C\subset\RR^d$ be a convex body and let $\Lambda\subset\RR^d$ be a $d$-lattice.
\begin{enumerate}
\item If $C$ is reduced (complete) with respect to $\Lambda$, then $\lambda C+t$ is reduced (complete) with respect to $\Lambda$, for any $\lambda\in\RR\setminus\{0\}$ and $t\in\RR^d$.
\item If $C$ is reduced (complete) with respect to $\Lambda$, then $AC$ is reduced (complete) with respect to $A\Lambda$, for any $A\in\mathrm{GL}_d(\RR)$.
\item If $C$ is reduced (complete) with respect to $\ZZ^d$, then $UC$ is reduced (complete) with respect to $\ZZ^d$, for any $U\in\mathrm{GL}_d(\ZZ)$.
\end{enumerate}
\end{lemma}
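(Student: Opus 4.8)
The plan is to prove each of the three invariance properties by reducing them to the behavior of the width and diameter functionals under the relevant transformations, and then arguing that the defining property of reducedness/completeness (a strict inclusion preserving the functional) is transported along. The key observation is that it suffices to track how $\wdt_\Lambda$ and $\diam_\Lambda$ transform, since being reduced (complete) is precisely the non-existence of a strictly smaller (larger) body with the same width (diameter).

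For part (2), which I would treat first as it is the most general and subsumes part (3), I would start by establishing the transformation rules for the functionals under an invertible linear map $A\in\GL_d(\RR)$. The crucial fact is that the dual lattice transforms as $(A\Lambda)^\star = (A^{-T})\Lambda^\star$, so that for any $y'\in(A\Lambda)^\star\setminus\{0\}$ we can write $y' = A^{-T}y$ with $y\in\Lambda^\star\setminus\{0\}$, and then
\[
\wdt(AC;y') = \max_{a,b\in C} (A^{-T}y)\cdot A(a-b) = \max_{a,b\in C} y\cdot (a-b) = \wdt(C;y),
\]
using $(A^{-T}y)\cdot (Ax) = y\cdot x$. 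Taking the minimum over the respective dual lattices gives $\wdt_{A\Lambda}(AC) = \wdt_\Lambda(C)$; an analogous computation, using that $A$ maps lattice segments of $\Lambda$ to lattice segments of $A\Lambda$ with the primitive direction transforming by $A$, yields $\diam_{A\Lambda}(AC) = \diam_\Lambda(C)$. With these identities in hand, suppose $AC$ is \emph{not} reduced with respect to $A\Lambda$, so there is $C' \subsetneq AC$ with $\wdt_{A\Lambda}(C') = \wdt_{A\Lambda}(AC)$. Then $A^{-1}C' \subsetneq C$ (strict inclusion is preserved by the bijection $A$) and, applying the width identity to the lattice $\Lambda$ and the body $A^{-1}C'$, we get $\wdt_\Lambda(A^{-1}C') = \wdt_{A\Lambda}(C') = \wdt_{A\Lambda}(AC) = \wdt_\Lambda(C)$, contradicting reducedness of $C$. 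The complete case is symmetric, exchanging $\subsetneq$ for $\supsetneq$ and width for diameter.

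Part (3) then follows immediately from part (2) by taking $A = U\in\GL_d(\ZZ)$ and observing that $U\ZZ^d = \ZZ^d$, since a unimodular transformation maps the integer lattice onto itself. For part (1), I would handle translations and dilations separately. Translation invariance is essentially trivial because both $\wdt(C;y)=h(C-C;y)$ and the lattice length of segments depend only on difference sets, and $(C+t)-(C+t) = C-C$; thus $\wdt_\Lambda(C+t)=\wdt_\Lambda(C)$ and $\diam_\Lambda(C+t)=\diam_\Lambda(C)$, and the strict-inclusion argument carries over verbatim via the bijection $x\mapsto x+t$. For the dilation by $\lambda\neq 0$, I would note that $\wdt_\Lambda(\lambda C) = |\lambda|\,\wdt_\Lambda(C)$ and $\diam_\Lambda(\lambda C) = |\lambda|\,\diam_\Lambda(C)$, which follow from positive homogeneity of the support function and of $\Vol_1$; the key point is that \emph{both} $C'$ and $\lambda C$ scale by the same factor, so the equality $\wdt_\Lambda(C')=\wdt_\Lambda(\lambda C)$ transfers to $\wdt_\Lambda(\lambda^{-1}C')=\wdt_\Lambda(C)$ under the bijection $x\mapsto \lambda^{-1}x$.

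I do not expect a genuine obstacle here, as all three statements are formal consequences of how the two functionals interact with affine maps; the mild care needed is in the diameter case, where one must verify that $A$ (or a dilation) sends lattice segments to lattice segments and that the normalization by the primitive direction is respected—specifically that the primitive direction $v_I$ of $I$ in $\Lambda$ maps to the primitive direction of $AI$ in $A\Lambda$, so that the ratio $|b-a|/|v_I|$ is genuinely preserved. This is where I would be most explicit, since it is the only place where the lattice structure (rather than just the linear structure of $\RR^d$) enters in a non-obvious way.
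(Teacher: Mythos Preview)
Your proposal is correct and is essentially what the paper has in mind: the paper dispatches the lemma in one line, stating that all three parts ``follow directly from the properties of the successive minima'' (via the identifications $\wdt_\Lambda(C)=\lambda_1((C-C)^\star;\Lambda^\star)$ and $\diam_\Lambda(C)=\lambda_1(C-C;\Lambda)^{-1}$ from Lemma~\ref{lemma:successive_minima}), which is exactly the transformation-of-functionals argument you spell out. Your version is simply more explicit---working directly with $\wdt_\Lambda$ and $\diam_\Lambda$ rather than routing through $\lambda_1$---and the care you take with the primitive-direction bookkeeping in the diameter case is the right place to be explicit.
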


All statements follow directly from the properties of the successive minima. The lemma shows that there is no restriction in considering only the integer lattice $\Lambda=\ZZ^d$. However, when constructing a reduced or complete convex body it is oftentimes more convenient to modify the lattice rather than the convex body, which is why we formulate our results for arbitrary lattices.

\section{Results and constructions for general convex bodies}
\label{sec:struct}

\subsection{Reduced Bodies}
First we show that a convex body is reduced if and only if every exposed point is selected by a width direction. An analog of this statement is also known in the Euclidean case \cite[Theorem 1]{reduced_survey}.
\begin{proposition}
\label{prop:reduced_exposed}
Let $C\subset\RR^d$ be a convex body and let $\Lambda\subset\RR^d$ be a $d$-lattice. The following are equivalent:
\begin{enumerate}
\item $C$ is reduced with respect to $\Lambda$.
\item\label{c:condition} For every exposed point $p\in C$, there exists a width direction $y\in \Lambda^\star$ such that $y\cdot p > y\cdot x$, for all $x\in C\setminus\{p\}$.
\end{enumerate}
\end{proposition}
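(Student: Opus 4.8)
The plan is to prove the two implications separately, using the decomposition $\wdt(C;y)=h(C;y)+h(C;-y)$ throughout and the stability of width directions under small perturbations from Lemma \ref{lemma:continuity}.

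For the implication $(2)\Rightarrow(1)$, I would argue by contradiction. Assume $(2)$ holds but $C$ is not reduced, so there is a convex body $C'\subsetneq C$ with $\wdt_\Lambda(C')=\wdt_\Lambda(C)$. First I would locate an exposed point of $C$ outside $C'$: since $\cl(\expo C)=\ext C$ and $C=\conv(\ext C)$, the closed convex hull of $\expo C$ equals $C$, so if every exposed point lay in $C'$ we would get $C\subseteq C'$, a contradiction; hence there is $p\in\expo(C)\setminus C'$. Applying $(2)$ to $p$ yields a width direction $y$ with $y\cdot p>y\cdot x$ for all $x\in C\setminus\{p\}$. Because $y$ is a width direction and $C'\subseteq C$, the chain $\wdt_\Lambda(C)=\wdt_\Lambda(C')\le\wdt(C';y)\le\wdt(C;y)=\wdt_\Lambda(C)$ forces $\wdt(C';y)=\wdt(C;y)$; splitting this into $h(C';y)+h(C';-y)=h(C;y)+h(C;-y)$ and using $h(C';\pm y)\le h(C;\pm y)$ termwise gives $h(C';y)=h(C;y)=y\cdot p$. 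This is the desired contradiction: since $p$ is the unique maximiser of $y$ on $C$ and $p\notin C'$, compactness of $C'$ yields $h(C';y)<y\cdot p$.

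For $(1)\Rightarrow(2)$, I would prove the contrapositive by exhibiting a strictly smaller body of the same width whenever $(2)$ fails. Suppose $p\in\expo(C)$ is not uniquely exposed by any width direction, and fix a unit vector $u$ with $C\cap\{x:u\cdot x=u\cdot p\}=\{p\}$. The candidate body is the truncation $C'_\delta=C\cap\{x:u\cdot x\le u\cdot p-\delta\}$, which is a convex body strictly contained in $C$ for small $\delta>0$, as it omits $p$. The set $W$ of width directions is finite (being the lattice points of a bounded set by Lemma \ref{lemma:successive_minima}) and symmetric, since $\wdt(C;-y)=\wdt(C;y)$. The key point is that the truncation does not change the width in any direction of $W$: for $y\in W$, the failure of $(2)$ at $y$ and at $-y$ provides a maximiser of $y$ and a maximiser of $-y$ on $C$ that are distinct from $p$, hence lie strictly below the level $u\cdot p$; for $\delta$ smaller than the finitely many positive gaps $u\cdot p-u\cdot x$ at these maximisers, they survive in $C'_\delta$, so $h(C'_\delta;y)=h(C;y)$ and $h(C'_\delta;-y)=h(C;-y)$, giving $\wdt(C'_\delta;y)=\wdt_\Lambda(C)$.

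It remains to rule out that truncation creates a new, smaller width in a direction outside $W$, and this is where I would invoke Lemma \ref{lemma:continuity}: since $\hausdorff(C,C'_\delta)\to0$ as $\delta\to0$, for $\delta$ small the width directions of $C'_\delta$ are contained in those of $C$, i.e.\ in $W$. Consequently $\wdt_\Lambda(C'_\delta)$ is attained at some $y\in W$, and by the previous paragraph this value equals $\wdt_\Lambda(C)$; as $C'_\delta\subsetneq C$, the body $C$ is not reduced. I expect the main obstacle to be exactly this last step, namely controlling all non-width directions simultaneously: without Lemma \ref{lemma:continuity} one would separate the infinitely many remaining $y\in\Lambda^\star$ into a large-norm range, where $\wdt(C'_\delta;y)$ grows linearly in $|y|$ from a fixed interior ball of $C'_\delta$, and a bounded range of finitely many $y$ with $\wdt(C;y)>\wdt_\Lambda(C)$, handled by Lipschitz continuity of $y\mapsto\wdt(\cdot;y)$ in the Hausdorff metric. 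Leaning on the stability lemma is what keeps the argument clean.
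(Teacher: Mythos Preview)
Your proof is correct and follows essentially the same approach as the paper: both directions argue by contrapositive, with the forward direction truncating near a ``bad'' exposed point and invoking Lemma~\ref{lemma:continuity} to control non-width directions, and the backward direction locating an exposed point of $C$ outside $C'$ and deriving a width contradiction. Your version is slightly more explicit in two places---justifying why an exposed point outside $C'$ exists via $C=\cl\conv(\expo C)$, and handling both $y$ and $-y$ separately via the symmetry of $W$---the latter of which the paper leaves implicit when asserting $\wdt(C_\varepsilon;y_i)=\wdt(C;y_i)$.
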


\begin{proof}
If $C$ is not reduced, there exists a convex body $C' \subsetneq C$ such that $\wdt_\Lambda(C')=\wdt_\Lambda(C)$. Since $C'$ is closed and strictly contained in $C$, there exists a point $p \in \expo(C)$ which is not in $C'$. Thus if condition \eqref{c:condition} would hold, then there would exist a width direction $y\in \Lambda^\star$ such that $\wdt(C'; y) < \wdt(C; y) = \wdt_\Lambda(C)$, a contradiction since $C'$ and $C$ have the same width.

For the converse, let $p\in\expo(C)$ be an exposed point and let $H=\{x: c \cdot x = b\}$, $c\in\RR^d\setminus\{0\}$, be the hyperplane selecting $p$, that is, $H\cap C= \{p\}$ and $C \subset H^-= \{x: c \cdot x \leq b\}$. We define $C_\varepsilon = C \cap H_\varepsilon^-,$ where $H_\varepsilon^- = \{x: c \cdot x \leq b- \varepsilon\}$ and $\varepsilon>0$. Moreover, let $\{y_1, \dots, y_k\}= \wdt_\Lambda(C)(C-C)^\star\cap\Lambda^\star\setminus\{0\}$ be the width directions of $C$, which are finite (cf.\ Lemma \ref{lemma:successive_minima}).

Suppose now that condition \eqref{c:condition} does not hold for $p$, that is, for each of the width directions $y_i$, there exists some $q_i \in C \setminus \{p\}$ such that  $y_i\cdot q_i \geq y_i\cdot p$. We want to show that for $\varepsilon>0$ small enough we have $\wdt_\Lambda(C_\varepsilon) = \wdt_\Lambda(C)$. First observe that, if $\varepsilon$ is chosen sufficiently small, $H_\varepsilon^-$ will still contain $q_1, \dots, q_k$, which implies $q_i \in C_\varepsilon$ and thus $\wdt(C_\varepsilon;y_i) = \wdt(C;y_i)$.

Now we recall from Lemma \ref{lemma:continuity} that for $\varepsilon>0$ small enough, 
the width directions of $C_\varepsilon$ are among the width directions $\{y_1,\dots,y_k\}$ of $C$. In conclusion, for $\varepsilon >0$ small enough it holds that $\wdt_\Lambda(C_\varepsilon)=\wdt_\Lambda(C)$, and thus $C$ is not lattice reduced.
\end{proof}

The next theorem entails the first statements of Theorems \ref{thm:polytopes} and \ref{thm:dimension}.

\begin{theorem}
\label{thm:reduced_meta}
Let $C\subset\RR^d$ be a convex body, reduced with respect to a $d$-lattice $\Lambda$. Let $k=\dim\{y\in\Lambda^\star \colon y\text{ is a width direction of }C\}$. Then, $C$ is a polytope with at most $2^{k+1}-2$ vertices.
\end{theorem}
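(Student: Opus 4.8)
The plan is to first show that a reduced $C$ must be a polytope, and then to bound its vertices by mapping them into the nonzero residue classes of $M/2M$, where $W$ is the $k$-dimensional span of the width directions and $M=\Lambda^\star\cap W$ is the induced rank-$k$ lattice. The target $2^{k+1}-2=2(2^k-1)$ suggests exactly this: there are $2^k-1$ nonzero classes, and I will argue that each is the image of at most two vertices.

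First I would establish that $C$ is a polytope. By Proposition~\ref{prop:reduced_exposed}, every exposed point of $C$ is the unique maximizer over $C$ of some width direction $y\in\Lambda^\star$. Since a linear functional has at most one unique maximizer and there are only finitely many width directions (Lemma~\ref{lemma:successive_minima}), $C$ has finitely many exposed points; as $\ext(C)=\cl(\expo(C))$, the extreme points coincide with these finitely many exposed points, so $C$ is a polytope. Choosing for each vertex $v$ a width direction $y_v$ that uniquely exposes it then defines a map $v\mapsto y_v$ from the vertex set into the set of width directions, and this map is injective because a direction uniquely exposes at most one point.

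Next I would pass to $W$. All width directions lie in $W$, which is spanned by lattice vectors of $\Lambda^\star$, so $M=\Lambda^\star\cap W$ is a lattice of rank $k$ containing every width direction; reducing modulo $2M$ gives $M/2M\cong(\ZZ/2\ZZ)^k$. No width direction $y$ lies in $2M$: otherwise $y=2z$ with $z\in M\setminus\{0\}\subset\Lambda^\star\setminus\{0\}$ and $\wdt(C;y)=2\,\wdt(C;z)\ge 2\,\wdt_\Lambda(C)>\wdt_\Lambda(C)$, contradicting that $y$ realizes the width. Hence $v\mapsto[y_v]$ sends vertices into the $2^k-1$ nonzero classes of $M/2M$, and it suffices to show each class is hit by at most two vertices.

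The fiber bound is the crux, and I expect it to be the main obstacle. Suppose $v\neq v'$ are vertices with $y_v\equiv y_{v'}\pmod{2M}$; writing $y=y_v$, $y'=y_{v'}$, I want $y'=-y$, so assume for contradiction $y'\neq\pm y$. Then $a=\tfrac12(y+y')\in M\setminus\{0\}\subset\Lambda^\star\setminus\{0\}$ gives $\wdt(C;a)\ge\wdt_\Lambda(C)$, while sublinearity of the support function gives $\wdt(C;a)=h(C-C;a)\le\tfrac12\bigl(h(C-C;y)+h(C-C;y')\bigr)=\wdt_\Lambda(C)$. Thus $h(C-C;y+y')=h(C-C;y)+h(C-C;y')$, so a maximizer $w^\ast\in C-C$ of $y+y'$ maximizes $y$ and $y'$ individually; writing $w^\ast=c-c'$ with $c,c'\in C$ and using $h(C-C;u)=h(C;u)+h(C;-u)$, maximality forces $y\cdot c=h(C;y)$ and $y'\cdot c=h(C;y')$, i.e.\ $c\in F_y(C)\cap F_{y'}(C)$ with $F_y(C)=\{x\in C:y\cdot x=h(C;y)\}$. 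But $y$ and $y'$ uniquely expose $v$ and $v'$, so $F_y(C)=\{v\}$ and $F_{y'}(C)=\{v'\}$, whence $v=c=v'$, a contradiction. Therefore $y_{v'}=-y_v$, and since $v\mapsto y_v$ is injective, each nonzero class of $M/2M$ is the image of at most two vertices, yielding at most $2(2^k-1)=2^{k+1}-2$ vertices. The only genuinely delicate point is the equality case of the subadditivity inequality and its translation into a single point $c$ lying simultaneously in $F_y(C)$ and $F_{y'}(C)$; everything else is bookkeeping.
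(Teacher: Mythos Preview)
Your proof is correct and follows essentially the same parity-mod-$2$ idea as the paper: the paper phrases the key step as a separate geometric lemma (Lemma~\ref{lemma:3}) about boundary lattice points of $\wdt_\Lambda(C)(C-C)^\star$ no two of which share a facet---reached via a normal-fan/face-fan argument---whereas you carry out the equivalent reasoning directly through the equality case of support-function subadditivity. Both routes boil down to showing that two selected width directions congruent mod $2$ must be negatives of one another; the paper's packaging has the minor advantage that Lemma~\ref{lemma:3} is reused verbatim in the complete case (Theorem~\ref{thm:complete_meta}).
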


Before we come to the proof, we need the following lemma.

\begin{lemma}
    \label{lemma:3}
 Let $C\subset\RR^d$ be a convex body with $\inter C\cap \Lambda = \{0\}$. Let $A\subseteq \bd(C)\cap \Lambda$ such that no two points in $A$ lie in a common facet of $C$. Then, $|A|\leq 2^{k+1}-2$, where $k$ is the dimension of $\spann A$. 
\end{lemma}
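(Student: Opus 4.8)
The plan is to run a pigeonhole argument on residues modulo $2$ in the sublattice spanned by $A$. Set $\Lambda' = \Lambda\cap\spann A$, a sublattice of rank $k$ containing $A$, and consider the reduction map $\phi\colon\Lambda'\to\Lambda'/2\Lambda'\cong(\ZZ/2\ZZ)^k$ restricted to $A$; the target has exactly $2^k$ elements, one of which is the zero class. The whole argument then reduces to two claims: that $\phi$ avoids the zero class on $A$, and that each fiber of $\phi$ on $A$ has at most two elements. Granting both, $A$ injects (up to fibers of size $2$) into the $2^k-1$ nonzero classes, yielding $|A|\le 2\,(2^k-1)=2^{k+1}-2$.

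For the first claim, since $0\in\inter C$ while $A\subseteq\bd C$, no point of $A$ is the origin. If some $a\in A$ satisfied $\phi(a)=0$, i.e.\ $a\in 2\Lambda'$ with $a\neq 0$, then $\tfrac12 a\in\Lambda'$ would be the midpoint of $0\in\inter C$ and $a\in C$, hence would lie in $\inter C$; but $\tfrac12 a\in\Lambda\setminus\{0\}$ contradicts $\inter C\cap\Lambda=\{0\}$. For the fiber bound, suppose $a,b\in A$ are distinct with $\phi(a)=\phi(b)$, so that $m=\tfrac12(a+b)\in\Lambda'$ and, by convexity, $m\in C$. If $m\in\inter C$ then $m=0$, forcing $b=-a$. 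If instead $m\in\bd C$, I would take a supporting hyperplane $H$ at $m$; since $m$ is the midpoint of $a,b$, both contained in the closed halfspace bounded by $H$, and $m\in H$, both $a$ and $b$ must lie in the exposed face $C\cap H$, so they share a proper face and hence a common facet, contradicting the hypothesis on $A$. Therefore any two distinct points of a common fiber are antipodal, and since three distinct points cannot be pairwise antipodal, every fiber has at most two elements.

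The main obstacle is the second case of the fiber analysis, specifically the passage from ``the midpoint lies on $\bd C$'' to ``the two endpoints lie in a common facet''. The geometric input — that a boundary point realized as the midpoint of two points of $C$ drags the whole segment $[a,b]$ onto $\bd C$, so that $a$ and $b$ share an exposed proper face — is standard (it follows directly from the supporting hyperplane at $m$). The care needed is to match this common proper face with the precise notion of ``facet'' used in the statement; this is immediate in the intended application, where the relevant proper face lies in a facet of $C$, but it is the step I would state most explicitly to make the contradiction with the hypothesis airtight.
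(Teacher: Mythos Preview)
Your argument is correct and essentially identical to the paper's: both run the pigeonhole on residues modulo $2$, show that the zero class is avoided (since $a/2$ would be a nonzero interior lattice point), and show that two distinct points in the same class must be antipodal (since otherwise their midpoint forces them into a common proper face). The only cosmetic difference is that the paper first treats the full-dimensional case $k=d$ and then passes to $C\cap\spann A$, whereas you work directly in $\Lambda'=\Lambda\cap\spann A$ from the start; and the face-versus-facet step you flag is handled by the paper in exactly the same way (it derives ``common face'' and invokes the ``no common facet'' hypothesis), so your caution there is well-placed but not a gap relative to the paper.
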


\begin{proof}
 First we assume that $k=d$. Suppose there are points $a,b\in A$ with $a\neq b$ and $a\equiv b \mod 2\Lambda$. Then $(a+b)/2$ is a lattice point in $\inter C$, because otherwise, $a$ and $b$ had to be contained in a common face of $C$. Consequently, $-a=b$. Thus, there cannot be more than 2 points of $A$ within one coset of $\Lambda/2\Lambda$. Further, there is no point  $a\in A\cap 2\Lambda$, since then, $a/2$ would be a non-zero point in $\inter C$. As there are $2^d$ cosets, the statement follows.

If $k<d$, we consider the convex body $C^\prime = C\cap L$, where $L=\spann A$, and the lattice $\Lambda^\prime = \Lambda\cap L$. Then, $C^\prime$ and $\Lambda^\prime$ are $k$-dimensional in $L$. So the statement follows by applying the full-dimensional case in $L$.
\end{proof}

\begin{proof}[Proof of Theorem \ref{thm:reduced_meta}]
Since there are only finitely many width directions of $C$, it follows from Proposition \ref{prop:reduced_exposed} that $C$ has only finitely many exposed points and is, thus, a polytope.

Let $v_1,\dots,v_n\in C$ be the vertices of $C$ and let $y_i\in \Lambda^\star$ be the width direction which only selects $v_i$, as guaranteed by Proposition \ref{prop:reduced_exposed}, $1\leq i \leq n$. Then, $y_i$ is contained in the interior of the normal cone $\ncone(v_i;C)\in\normalfan(C)$. Since the normal fan of $C-C$ is the common refinement of $\normalfan(C)$ and $\normalfan(-C)$ \cite[Proposition 7.12]{ziegler}, no two distinct vectors $y_i$ and $y_j$ fall in the same normal cone of $C-C$. By duality, this implies that no two of the vectors $y_i$ and $y_j$ are contained in a common face cone in $\facefan((C-C)^\star)$. In particular, it follows that there is no dilation of $(C-C)^\star$ such that $y_i$ and $y_j$ lie in a common facet of this dilation. 

Now consider the polytope $P = \sukz_1((C-C)^\star;\Lambda^\star)(C-C)^\star$. From the definition of the successive minima we have that $P$ contains the origin as its unique interior lattice point. Since the $y_i$ are width directions of $C$, they are contained in $\bd(P)$. Since no two of the vectors $y_i$ and $y_j$ are contained in a common facet of $P$, we can apply Lemma \ref{lemma:3} to $P$ and $A=\{y_1,\dots,y_n\}$ to obtain $n\leq 2^{k+1}-2$ as desired.
\end{proof}

\subsection{Complete Bodies}
Here we prove a similar statement to Theorem \ref{thm:reduced_meta} for complete convex bodies.

\begin{theorem}
\label{thm:complete_meta}
Let $C\subset\RR^d$ be a convex body, complete with respect to a $d$-lattice $\Lambda$. Let $k=\dim\{y\in\Lambda^\star \colon y\text{ is a diameter direction of }C\}$. Then, $C$ is a polytope with at most $2^{k+1}-2$ facets.
\end{theorem}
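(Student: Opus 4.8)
The plan is to mirror the proof of Theorem \ref{thm:reduced_meta} under the dictionary that exchanges width with diameter, vertices with facets, the polar difference body $(C-C)^\star$ with the difference body $C-C$, and the dual lattice $\Lambda^\star$ with $\Lambda$. Throughout I would work with the symmetric body $Q := \mu\,(C-C)$, where $\mu := \sukz_1(C-C;\Lambda)$; by Lemma \ref{lemma:successive_minima} the origin is the unique interior lattice point of $Q$, and the diameter directions are exactly the nonzero lattice points lying on $\bd Q$. The goal is to attach to every facet of $C$ a diameter direction sitting in the relative interior of a corresponding facet of $Q$, so that distinct facets yield lattice points in pairwise distinct facets of $Q$, and then to invoke Lemma \ref{lemma:3}.

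The heart of the argument is a characterization of completeness dual to Proposition \ref{prop:reduced_exposed}. Where the reduced case \emph{shrinks} $C$ by slicing off a cap $C\cap H_\varepsilon^-$ at an exposed point, the complete case should \emph{enlarge} $C$ by poking past a supporting hyperplane: given a supporting hyperplane with normal $\eta$, writing $F(C;\eta):=C\cap\{x:\eta\cdot x=h(C;\eta)\}$ for the exposed face in direction $\eta$, I would consider $C_\varepsilon=\conv(C\cup\{q\})$ for a point $q$ slightly beyond that hyperplane. Passing to difference bodies gives $C_\varepsilon-C_\varepsilon=\conv\big((C-C)\cup(q-C)\cup(C-q)\big)$, and a short computation shows that the face of $Q$ with outer normal $\eta$ equals $\mu\,(F(C;\eta)-F(C;-\eta))$, while poking $q$ past the hyperplane pushes exactly this face outward. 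Hence $\diam_\Lambda(C_\varepsilon)=\diam_\Lambda(C)$ precisely when no diameter direction is swallowed into $\inter Q$, and using Lemma \ref{lemma:continuity} to keep the diameter directions of $C_\varepsilon$ under control, I would obtain: $C$ is complete if and only if for every supporting hyperplane of $C$ with normal $\eta$ there is a diameter direction lying in the relative interior of $\mu\,(F(C;\eta)-F(C;-\eta))$ and swallowed by every outward perturbation. Applied to the normal $\eta_F$ of a facet $F$, this assigns to $F$ a diameter direction $w_F$ in the relative interior of the facet $\mu\,(F-F(C;-\eta_F))$ of $Q$.

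Distinct facets of $C$ have distinct outer normals, so their associated faces $\mu\,(F-F(C;-\eta_F))$ are distinct facets of $Q$: indeed a facet $F$ always yields a $(d-1)$-face of $C-C$ with the same normal, since $F-F(C;-\eta_F)$ lies in a hyperplane orthogonal to $\eta_F$ and already contains a translate of the $(d-1)$-dimensional set $F$. As the $w_F$ lie in relative interiors of distinct facets of $Q$, and distinct facets of a convex body meet only in lower-dimensional faces, no two of the $w_F$ share a facet of $Q$; since moreover $\inter Q\cap\Lambda=\{0\}$, Lemma \ref{lemma:3} applied to $Q$ and $A=\{w_F : F\text{ a facet of }C\}$ yields $|A|\le 2^{k'+1}-2$ with $k'=\dim\spann\{w_F : F\text{ a facet of }C\}\le k$. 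Since every facet contributes to $A$, this already shows that $C$ has at most $2^{k+1}-2$ facets, in particular finitely many.

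The step I expect to be the genuine obstacle is upgrading ``finitely many facets'' to ``$C$ is a polytope''. In the reduced case finiteness of the exposed points suffices, because $\cl(\expo(C))=\ext(C)$ and finitely many extreme points force a polytope; but for complete bodies finitely many facets does not by itself exclude round boundary pieces (a disc has no facets at all), so this is where the reduced-to-complete symmetry breaks down. I would therefore run the completeness characterization at \emph{every} exposed face, not only at facets, and argue that protecting a lower-dimensional exposed face $F(C;\eta)$ forces the protecting lattice point to equal $\mu$ times a difference of boundary points of $C$; since the diameter directions are finite, such coincidences can persist only along finitely many directions $\eta$, which rules out strictly convex arcs and forces $\bd C$ to be a finite union of facets. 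Making this covering argument precise—together with the relative-interior bookkeeping inside $C-C$ needed to guarantee that each $w_F$ avoids the lower faces of $Q$—is where the real care is required.
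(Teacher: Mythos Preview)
Your overall strategy---associate to each facet $F$ a diameter direction $w_F$ so that no two $w_F$'s share a facet of $\mu(C-C)$, then invoke Lemma~\ref{lemma:3}---is the same as the paper's. But two points deserve comment.

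First, the facet-counting step. You want $w_F$ to lie in the \emph{relative interior} of the facet $\mu\big(F-F(C;-\eta_F)\big)$ of $Q$, and your completeness characterization is phrased so as to deliver exactly that. This is stronger than what the paper proves, and your justification (``swallowed by every outward perturbation'') is not fully argued: a lattice point on a subface of $\mu\big(F-F(C;-\eta_F)\big)$ can certainly be swallowed by \emph{some} perturbations, and ruling out that it survives \emph{all} of them requires care about how adjacent facets of $C-C$ move. The paper sidesteps this. Once $C$ is known to be a polytope, Proposition~\ref{prop:endpoints} gives a diameter segment $[x_F,y_F]$ with $x_F\in\relint F$; setting $v_F=x_F-y_F$, Lemma~\ref{lemma:1} shows directly that if $v_F$ and $v_G$ lie in a common facet of $C-C$ then $x_F$ and $x_G$ lie in a common facet of $C$, forcing $F=G$. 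No relative-interior statement in $C-C$ is needed.

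Second, and more importantly, the polytope step. You correctly identify this as the obstacle and your sketch (``run the characterization at every exposed face and rule out strictly convex arcs'') is too vague to be a proof. The paper's argument is clean and worth knowing: by Lemma~\ref{lemma:2}, the endpoints of all diameter segments lie in finitely many pairs of opposite exposed faces $F_1,\dots,F_s$ with outer normals $u_1,\dots,u_s$. If $C$ were strictly contained in $\bigcap_i\{x:u_i\cdot x\le h(C;u_i)\}$, pick a boundary point $x\in\bd C$ with $u_i\cdot x<h(C;u_i)$ for all $i$ and poke slightly outward at $x$. For small $\varepsilon$ the hyperplanes $H_i$ still support the enlarged body, so no diameter segment can lengthen (Lemma~\ref{lemma:continuity} controls the directions), contradicting completeness. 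Hence $C$ equals that finite intersection of halfspaces and is a polytope. Note that this argument comes \emph{first}: the paper establishes polytopality before counting facets, whereas you attempt the reverse order and then have nothing solid to stand on for the polytope conclusion.
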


Although Theorem \ref{thm:complete_meta} is exactly the dual statement to Theorem \ref{thm:reduced_meta} in the origin-symmetric case, we are not aware of an argument that allows to derive one of the theorems from the other. 

Again, we need some preliminary results in order to prove Theorem \ref{thm:complete_meta}.

\begin{lemma}
    \label{lemma:1}
    Let $C$ be a convex body and consider $[a,b],[c,d]\subset C$ such that $b-a$ and $d-c$ are in  a common face of $C-C$. Then there exist opposite exposed faces $F_1,F_2\subset C$ of $C$ with $a,b\in F_1$ and $c,d\in F_2$. Here, $F_1$ and $F_2$ are called \emph{opposite}, if there exist parallel supporting hyperplanes $H_1,H_2\subset\RR^d$ such that $F_i = C\cap H_i$, $i\in\{1,2\}$.
\end{lemma}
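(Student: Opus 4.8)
The plan is to convert the hypothesis on the difference body $C-C$ into a statement about two opposite supporting hyperplanes of $C$, via the additivity of the support function. The key identity is
\[
h(C-C;y)=h(C;y)+h(C;-y)\qquad\text{for all }y\in\RR^d\setminus\{0\},
\]
which shows that a face of $C-C$ exposed by $y$ is precisely the Minkowski difference of the two opposite exposed faces $C\cap\{x:y\cdot x=h(C;y)\}$ and $C\cap\{x:y\cdot x=-h(C;-y)\}$ of $C$. First I would choose a functional $y$ exposing the common face $G\ni b-a,\,d-c$; if $G$ itself is not exposed I replace it by the smallest exposed face containing it, which leaves the following argument untouched, since $b-a$ and $d-c$ still lie on the corresponding supporting hyperplane of $C-C$.

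Next I would feed the four endpoints into the identity above. Because $b-a$ lies in the face of $C-C$ exposed by $y$, we have $y\cdot(b-a)=h(C;y)+h(C;-y)$, while $y\cdot b\le h(C;y)$ and $-y\cdot a\le h(C;-y)$ hold simply because $a,b\in[a,b]\subseteq C$. Equality is therefore forced in both inequalities, so both endpoints of $[a,b]$ are pinned to the pair of parallel supporting hyperplanes $H_1=\{x:y\cdot x=h(C;y)\}$ and $H_2=\{x:y\cdot x=-h(C;-y)\}$; the same computation applied to $d-c\in G$ pins the endpoints of $[c,d]$ to the very same pair $H_1,H_2$. Setting $F_1=C\cap H_1$ and $F_2=C\cap H_2$ produces two opposite exposed faces of $C$, and these are the faces named in the statement.

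It remains to read off the grouping $a,b\in F_1$ and $c,d\in F_2$, and this endpoint-to-face bookkeeping is exactly the step I expect to be the main obstacle: the additivity identity by itself only records on which of $H_1,H_2$ each individual endpoint is forced to sit, so I must track the orientations of the two segments relative to $G$ consistently, and rule out the degenerate coincidence $H_1=H_2$ (which cannot occur here, since $\wdt(C;y)=h(C-C;y)>0$ for a body) in order to land the four endpoints in the configuration asserted by the lemma rather than in some other distribution across the two opposite faces. Once this pairing is settled, the conclusion follows directly from the definitions of $F_1$ and $F_2$ together with the convexity of each face.
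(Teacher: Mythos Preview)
Your approach is exactly the paper's: pass to an exposed face of $C-C$ containing the common face, pick a supporting functional $y$, use $h(C-C;y)=h(C;y)+h(-C;y)$, and split the equality $y\cdot(b-a)=h(C;y)+h(-C;y)$ into $y\cdot b=h(C;y)$ and $y\cdot(-a)=h(-C;y)$ (and likewise for $d,c$). At that point you are done.

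The ``main obstacle'' you anticipate is not an obstacle at all but a typo in the statement. The lemma as printed asserts $a,b\in F_1$ and $c,d\in F_2$, but what the argument proves---and what the paper actually uses (see the proof of the theorem on complete bodies, where from $x_F-y_F$ and $x_G-y_G$ lying in a common facet of $C-C$ one concludes that $x_F$ and $x_G$ lie in a common face of $C$)---is $b,d\in F_1$ and $a,c\in F_2$. In other words, each segment runs from $F_2$ to $F_1$, with the ``tips'' $b,d$ on the same side. The literal grouping $a,b\in F_1$ is in general false: take $C$ the unit disk and $[a,b]$ a diameter; then $b-a\in\bd(C-C)$, but no exposed face of the disk contains both $a$ and $b$. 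So stop worrying about the bookkeeping you describe; your computation already delivers the intended conclusion, and no further pairing argument is needed.
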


\begin{proof}
    Since any face of $C-C$ is contained in an exposed face, there exists a vector $v \in \RR^n\setminus\{0\}$ such that
    \[
        v\cdot (b-a) = v\cdot (d-c) = h(C-C;v) = h(C;v) + h(-C;v).
    \]
    It follows that
    \[
        v\cdot b = v\cdot d = h(C;v)\text{\quad and \quad}v\cdot (-c) = v\cdot (-a) = h(-C;v), 
    \]
    which proves the lemma.
\end{proof}

From this it follows immediately that for a fixed direction $v$, the segments of maximal length in a convex body that are parallel to $v$ pass between two fixed opposite faces.

\begin{lemma}
    \label{lemma:2}
    Let $v\in\RR^d\setminus\{0\}$ and let $S,S^\prime\subset C$ be two segments parallel to $v$ of maximal length in $C$. Then there exists a pair of opposite exposed faces $F,G$ such that $S$ and $S^\prime$ have an endpoint in $F$ and $G$ each.
\end{lemma}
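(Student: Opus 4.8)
The plan is to derive Lemma \ref{lemma:2} as an essentially immediate consequence of Lemma \ref{lemma:1}, by observing that two parallel chords of \emph{maximal} length do not merely have difference vectors lying in a common face of $C-C$: their difference vectors actually coincide.

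First I would fix notation. Write $S=[a,b]$ and $S'=[c,d]$, relabeling endpoints if necessary so that both $b-a$ and $d-c$ are nonnegative multiples of $v$. Setting $\hat v = v/|v|$, a segment parallel to $v$ contained in $C$ has Euclidean length equal to the coefficient in $b-a=|b-a|\,\hat v$, and by hypothesis this length is the maximum $L(v)$ over all chords of $C$ parallel to $v$. Hence $b-a = L(v)\,\hat v = d-c =: w$. I would then check that $w\in\bd(C-C)$: indeed $w=b-a\in C-C$, while if $w$ were an interior point, then $(1+\varepsilon)w\in C-C$ for small $\varepsilon>0$, producing a chord of $C$ parallel to $v$ strictly longer than $L(v)$, a contradiction.

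Next I would invoke Lemma \ref{lemma:1} with the two segments $[a,b]$ and $[c,d]$. Its hypothesis asks that $b-a$ and $d-c$ lie in a common face of $C-C$; here they are literally the same point $w$, which lies in the (proper) minimal face of $C-C$ through $w$ since $w\in\bd(C-C)$. Thus the hypothesis holds, and Lemma \ref{lemma:1} (as its proof shows) supplies opposite exposed faces $F,G\subset C$, cut out by parallel supporting hyperplanes, with the two ``forward'' endpoints $b,d$ lying in $F$ and the two ``backward'' endpoints $a,c$ lying in $G$. Reading this off, $S=[a,b]$ has endpoint $b\in F$ and endpoint $a\in G$, and likewise $S'=[c,d]$ has $d\in F$ and $c\in G$. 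Hence $S$ and $S'$ each have one endpoint in $F$ and one in $G$, which is exactly the claim.

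I do not expect a genuine obstacle here: the statement is the precise form of the remark immediately preceding it, and the whole content is already packaged in Lemma \ref{lemma:1}. The only points requiring care are orienting the two segments consistently (so that the difference vectors come out equal rather than opposite) and matching the endpoint labels to the conclusion of Lemma \ref{lemma:1} — in particular making sure it is the two forward endpoints that share one face and the two backward endpoints that share the opposite one.
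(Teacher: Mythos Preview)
Your proposal is correct and matches the paper's approach: the paper states Lemma~\ref{lemma:2} as an immediate consequence of Lemma~\ref{lemma:1}, and your argument spells this out precisely, including the observation that the two maximal-length parallel chords have identical difference vector $w\in\bd(C-C)$. Your care in orienting the segments consistently and in reading off the correct pairing of endpoints from the \emph{proof} of Lemma~\ref{lemma:1} (rather than its stated conclusion, which appears to contain a typo) is exactly right.
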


We call \emph{diameter segments} those segments contained in the convex body which have maximum lattice length. The following proposition gives a helpful characterization of complete polytopes.

\begin{proposition}
\label{prop:endpoints}
A $d$-polytope $P\subset\RR^d$ is complete with respect to a $d$-lattice $\Lambda\subset\RR^d$ if and only if for each facet $F\subset P$, there is a diameter segment $I_F\subset P$ with an endpoint in $\relint F$.    
\end{proposition}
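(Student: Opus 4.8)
The plan is to prove the two implications separately, using throughout that a body is complete precisely when it is inclusion-maximal among convex bodies of the same lattice diameter; write $D=\diam_\Lambda(P)$.

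For the \emph{if} direction, suppose each facet $F$ carries a diameter segment $I_F=[x_F,y_F]$ with $y_F\in\relint F$. I would show that adjoining any point $p\notin P$ strictly increases the diameter, which gives completeness. Pick the facet $F_0=\{x\colon a\cdot x=b\}$ (with $P\subset\{a\cdot x\le b\}$) whose inequality $p$ violates, so $a\cdot p>b$, and set $v=y_{F_0}-x_{F_0}$. A first observation is that $I_{F_0}$ must meet $F_0$ transversally, i.e.\ $a\cdot v>0$: otherwise $I_{F_0}\subset F_0$, and since $y_{F_0}\in\relint F_0$ one could prolong it inside $F_0\subset P$ to a strictly longer lattice segment, contradicting maximality of $D$. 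Given $a\cdot v>0$, I would show $y_{F_0}+\delta v\in\conv(P\cup\{p\})$ for small $\delta>0$ by writing it as $(1-\mu)q+\mu p$ with $\mu=\delta(a\cdot v)/(a\cdot p-b)$; a direct computation gives $a\cdot q=b$ and $q\to y_{F_0}$ as $\delta\to0$, so $q\in\relint F_0\subset P$ for $\delta$ small. Then $[x_{F_0},y_{F_0}+\delta v]$ is a lattice segment in the same primitive direction but of lattice length $(1+\delta)D>D$, so $\diam_\Lambda(\conv(P\cup\{p\}))>D$, as wanted.

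For the \emph{only if} direction I would argue the contrapositive: assuming some facet $F_0=\{a\cdot x=b\}$ carries no diameter segment ending in $\relint F_0$, I want to enlarge $P$ without changing $D$. The natural candidate is the relaxation $P_\varepsilon=\{a\cdot x\le b+\varepsilon\}\cap\bigcap_{i\ne0}\{a_i\cdot x\le b_i\}$, which strictly contains $P$; the claim is that $\diam_\Lambda(P_\varepsilon)=D$ for all small $\varepsilon$. Since $\diam_\Lambda$ is monotone, only the upper bound needs proof, and I would argue it by contradiction: if for a sequence $\varepsilon_n\downarrow0$ there are chords of lattice length $>D$ in $P_{\varepsilon_n}$, then, since chords of length near $D$ have boundedly many primitive directions, after passing to a subsequence they share one direction $w$ and converge (by compactness, using $\bigcap_\varepsilon P_\varepsilon=P$) to a chord in $P$. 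Its length is both $\ge D$ and $\le D$, hence exactly $D$: a diameter segment $[u_\ast,z_\ast]$ parallel to $w$. Because the approximating chords gain their extra length only in the slab $\{a\cdot x>b\}$, one gets $a\cdot z_\ast=b$ and $a\cdot w>0$, so $z_\ast\in F_0$ and the segment meets $F_0$ transversally.

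The hard part will be to show that $z_\ast$ actually lies in $\relint F_0$, which would contradict the hypothesis and finish the proof. The danger is a ``grazing'' limit in which the moving endpoint slides toward a ridge $F_0\cap F_i$, leaving $z_\ast\in\bd F_0$. To control this I would analyze the concave, piecewise-linear chord-length function $z\mapsto\ell_w(z)$ recording the length of the backward $w$-chord emanating from $z\in F_0$; on $F_0$ it is bounded by $D\lvert w\rvert$ (any larger value would be a chord inside $P$ exceeding the diameter) and attains this value at $z_\ast$. Combined with Lemma \ref{lemma:2}, which confines all maximal $w$-chords to a fixed pair of opposite exposed faces, I would try to show that the maximizer set of $\ell_w$ on $F_0$ meets $\relint F_0$, ruling out the grazing scenario; alternatively one replaces the uniform relaxation of $F_0$ by pushing out a single deep interior point $p_0\in\relint F_0$ and uses that the gained length is largest near the apex, forcing the limiting endpoint into $\relint F_0$. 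Making one of these localizations rigorous — certifying that the enlargement cannot create longer chords touching only the relative boundary of $F_0$ — is the crux of the argument.
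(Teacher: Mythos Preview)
Your ``if'' direction is correct and fleshes out the paper's terse argument.

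For the ``only if'' direction you should commit to your second alternative, which is what the paper does: stack $F_0$ with a single point $p=p_0+\varepsilon n$ over a fixed $p_0\in\relint F_0$, rather than relaxing the whole facet. The relaxation $P_\varepsilon$ genuinely fails in the grazing scenario you worry about. If a diameter segment of $P$ lies in an adjacent facet $F_i$ and terminates at the ridge $F_0\cap F_i$, relaxing $F_0$ prolongs that very segment, so $\diam_\Lambda(P_\varepsilon)>D$ for every $\varepsilon>0$; for instance, with $P=\conv\{(0,0),(2,0),(0,1)\}$ and $\Lambda=\ZZ^2$ the unique diameter segment is the bottom edge, no facet sees a diameter endpoint in its relative interior, and yet relaxing the left edge gives a bottom chord of length $2+\varepsilon$. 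So your first fix (forcing $z_\ast\in\relint F_0$ in the limit) cannot be salvaged. Stacking avoids this because the added pyramid degenerates along $\partial F_0$: every point of $\partial P\setminus\relint F_0$ remains on $\partial Q=\partial(\conv(P\cup\{p\}))$, so the existing diameter segments --- none of which end in $\relint F_0$ --- cannot be extended at all, which is the paper's stated reason. If you want to close the residual worry that some \emph{shorter} $v$-chord through $\relint F_0$ gets extended beyond $D$, note that the hypothesis also forces $Dv$ off the relative interior of each facet of $P-P$ with outer normal $\pm a$, so the normal cone of $P-P$ at $Dv$ contains some $u_0\notin\RR a$; since then $-h(P;-u_0)<u_0\cdot p_0<h(P;u_0)$ strictly, one gets $h(Q-Q;u_0)=h(P-P;u_0)$ for small $\varepsilon$, whence $Dv\in\partial(Q-Q)$ and the maximal $v$-chord of $Q$ still has lattice length exactly $D$.
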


\begin{proof}
    If there were a facet not satisfying the condition above, then stacking that facet by adding a point beyond that facet but beneath all other facets
    would give a polytope with the same diameter, since this operation will not change diameter directions and does not lengthen existing diameter segments (the only maximal segments that can be extended in the stacking have an endpoint in $\relint F$).

    If $P$ were not complete but satisfies the condition of the proposition, let $Q$ be a polytope strictly containing it of the same diameter. If $q$ is a point of $Q\setminus P$, then it must lie beyond some facet of $P$, say $F$. Then the segment $I_F$ of the condition of the proposition can be extended to a longer segment in $Q$, contradicting the diameter condition on $Q$. 
 \end{proof}

Now we are well-equipped for the proof of Theorem \ref{thm:complete_meta}.

\begin{proof}[Proof of Theorem \ref{thm:complete_meta}]
    Without loss of generality, suppose $\diam_\Lambda(C)=1$ (cf.\ Lemma \ref{lemma:properties}). In view of Lemma \ref{lemma:successive_minima}, the diameter of $C$ is attained in finitely many directions. By Lemma \ref{lemma:2}, for each such diameter direction $v$, there exist two opposite exposed faces that contain the endpoints of all diameter segments parallel to $v$. So we have finitely many exposed faces $F_1,\dots,F_s\subset C$ that contain the endpoints of all diameter segments. Let $u_1,\dots,u_s$ be the outer unit normal vectors of $C$ that realize the facets $F_i$, that is, $F_i = \{x\in C\colon x\cdot u_i = h(C;u_i)\}$.
    
    Suppose that 
    $C$ is strictly contained in the intersection of the half-spaces $\{x\in\RR^d\colon u_i\cdot x \leq h(C;u_i)\}$. 
    Then there is a point $x\in \bd(C)$ with $u_i \cdot x < h(C;u_i)$, 
    for all $1\leq i \leq s$. 
    Let $\varepsilon>0$ be such that
    $ \varepsilon < h(C;u_i) - u_i\cdot x$, 
    for all $1\leq i\leq s$, and define $x_\varepsilon = x+\varepsilon v\not\in C$, for some outer unit normal vector $v$ of $C$ at $x$. We claim that $C^\prime = \conv(C\cup\{x_\varepsilon\})$ has the same diameter as $C$.

   If $\varepsilon$ is small enough, we can assume that the lattice diameter of $C^\prime$ is attained by one of the diameter directions of $C$ (cf.\ Lemma \ref{lemma:continuity}). Denote this direction by $y$ and let $F_1$ and $F_2$, say, be the faces of $C$ that contain the endpoints of diameter segments in this direction. By the choice of $\varepsilon$, we have
    \[
        x_\varepsilon \cdot u_i =  x\cdot u_i + \varepsilon v\cdot u_i \leq x\cdot u_i+\varepsilon < h(u_i; C),
    \]
    for all $i$. Thus, the hyperplanes $H_i=\{x\in\RR^d : u_i\cdot x = h(u_i;C)\}$ are also supporting hyperplanes of $C^\prime$. Therefore, a segment in $C^\prime$ parallel to $y$ cannot be longer than a segment passing between $F_1$ and $F_2$. So we obtain $\diam_\Lambda(C^\prime) = \diam_\Lambda(C)$, contradicting the completeness of $C$. It follows that $C$ is indeed the intersection of the halfspaces $\{x\in\RR^d\colon u_i\cdot x \leq h(u_i; C)\}$ and as such a polytope.

    In order to see that $C$ has at most $2^{k+1}-2$ facets, we recall from Proposition \ref{prop:endpoints}, that for each facet $F\subset C$, there exists a diameter segment $[x_F, y_F]\subset C$ such that $x_F\in\relint F$. 
    For each facet $F\subset C$, let $v_F = x_F-y_F$. Since $\diam_\Lambda(C) = 1$, these are boundary lattice points of $C-C$. Suppose $v_F$ and $v_G$ are in a common facet of $C-C$. By Lemma \ref{lemma:1}, we have that $x_F$ and $x_G$ are in a common facet of $C$. By construction, we must have $F=G$. Thus, no two distinct vectors $v_F$ share a facet.  Since, due to $\diam_\Lambda(C)=1$, we have $\inter(C-C)\cap\Lambda=\{0\}$, the inequality follows from applying Lemma \ref{lemma:3} to the vectors $v_F$ and $C-C$.
\end{proof}

\subsection{The Origin-Symmetric Case}
For origin-symmetric convex bodies, there is a duality between reducedness and completeness. 

\begin{theorem}
\label{prop:duality}
    Let $C\subset\RR^d$ be an origin-symmetric $d$-polytope and let $\Lambda\subset\RR^d$ be a $d$-lattice. 
    \begin{enumerate}
        \item The width directions of $C$ with respect to $\Lambda$ are the diameter directions of 
        $C^\star$ with respect to $\Lambda^\star$.
        \item $C$ is reduced with respect to $\Lambda$, if and only if $C^\star$ is complete with respect to $\Lambda^\star$. 
    \end{enumerate}
\end{theorem}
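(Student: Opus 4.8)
The plan is to prove the two parts of Theorem \ref{prop:duality} by exploiting the expressions for width and diameter in terms of the first successive minimum from Lemma \ref{lemma:successive_minima}, together with the duality between the first minimum of a symmetric body and its polar. Since $C$ is origin-symmetric, we have $C-C = 2C$ and $(C-C)^\star = \tfrac12 C^\star$, so the messy difference bodies collapse into dilates of $C$ and $C^\star$ themselves. This is the simplification that makes the symmetric case tractable, and it is where the whole argument lives.

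\begin{proof}[Proof plan]
First I would establish part (1). By Lemma \ref{lemma:successive_minima}, the width directions of $C$ with respect to $\Lambda$ are the nonzero lattice points of $\Lambda^\star$ lying in $\sukz_1((C-C)^\star;\Lambda)(C-C)^\star$, while the diameter directions of $C^\star$ with respect to $\Lambda^\star$ are the nonzero lattice points of $(\Lambda^\star)^\star = \Lambda$ lying in $\sukz_1(C^\star - C^\star; \Lambda^\star)(C^\star - C^\star)$. Using origin-symmetry, $(C-C)^\star = \tfrac12 C^\star$ and $C^\star - C^\star = 2 C^\star$. The key computation is to show that the two scaled bodies coincide as subsets of space and that the relevant successive minima are reciprocal: concretely, I expect $\sukz_1((C-C)^\star; \Lambda)$ and $\sukz_1(C^\star - C^\star;\Lambda^\star)$ to be equal up to the factor of $2$ bookkeeping, so that the two prescriptions pick out exactly the same set of lattice vectors. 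Here I would invoke the standard transference-type identity relating $\sukz_1(K;\Lambda)$ for symmetric $K$ to a quantity for $K^\star$ and $\Lambda^\star$; in fact the cleanest route is to note that $\sukz_1(K;\Lambda)$ for symmetric $K$ equals the reciprocal of the largest dilate of $K$ containing no nonzero point of $\Lambda$, and to track how polarity exchanges ``smallest dilate hitting the lattice'' with ``the dual incidence.'' Once the two scaled polytopes are identified, the claimed equality of direction sets is immediate because a lattice vector of $\Lambda^\star$ lies on the boundary of the first if and only if the corresponding vector of $\Lambda$ lies on the boundary of the second.

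For part (2), I would argue that reducedness of $C$ and completeness of $C^\star$ are each characterized purely in terms of the combinatorial incidence between width/diameter directions and the face structure, and that polarity matches these up. Concretely, Theorem \ref{thm:reduced_meta} and Proposition \ref{prop:reduced_exposed} show that $C$ is reduced exactly when every vertex of $C$ is uniquely selected by a width direction; dually, Proposition \ref{prop:endpoints} shows that $C^\star$ is complete exactly when every facet of $C^\star$ carries an endpoint of a diameter segment. Since vertices of $C$ correspond to facets of $C^\star$ under polarity, and by part (1) the width directions of $C$ are precisely the diameter directions of $C^\star$, I would translate the ``vertex uniquely selected'' condition into the ``facet met by a diameter segment'' condition directly. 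The width direction $y$ selecting a vertex $v$ of $C$ should, under the correspondence $y \leftrightarrow$ (a diameter segment of $C^\star$) and $v \leftrightarrow$ (a facet of $C^\star$), become exactly the statement that this facet of $C^\star$ is hit in its relative interior by a diameter segment in the direction dual to $v$.

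\textbf{The main obstacle} I anticipate is getting the bookkeeping of scalar factors and the precise polar-face incidences exactly right, rather than any deep difficulty: one must verify carefully that the normalizing dilation factors $\sukz_1((C-C)^\star;\Lambda)$ and $\sukz_1(C-C;\Lambda)$ interact correctly under polarity so that ``boundary of the scaled polar'' maps to ``boundary of the scaled body,'' and that the relative-interior condition of Proposition \ref{prop:endpoints} corresponds exactly to the strict-inequality (unique selection) condition of Proposition \ref{prop:reduced_exposed}, with no off-by-a-face errors at the boundary between relative interiors of faces and their closures. The cleanest way to sidestep delicate limiting issues is to work throughout with the polar-face correspondence $F \mapsto F^\diamond$ recalled in the preliminaries, which converts a $k$-face of $C$ into an $(d-1-k)$-face of $C^\star$, and to check that a direction lies in the relative interior of the normal cone of a vertex of $C$ if and only if it lies in the relative interior of the cone over the corresponding facet of $C^\star$.
\end{proof}
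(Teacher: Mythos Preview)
Your plan for part (2) is essentially the paper's approach: use the polar-face correspondence $v\leftrightarrow v^\diamond$ to translate Proposition \ref{prop:reduced_exposed} (vertex uniquely selected by a width direction) into Proposition \ref{prop:endpoints} (facet met in its relative interior by a diameter segment). The paper carries this out after normalizing so that $\diam_{\Lambda^\star}(C^\star)=2$, and uses origin-symmetry together with Lemma \ref{lemma:1} to see that if $[a,a+2y]\subset C^\star$ is a diameter segment with $a+2y\in\relint v^\diamond$, then also $-a\in v^\diamond$, hence the midpoint $y$ lies in $\relint v^\diamond = \relint\ncone(v;C)$. That is exactly the bookkeeping you flag as the main obstacle, and your intuition for how to handle it is correct.

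For part (1), however, there is a genuine slip that would send you down the wrong path. You write that the diameter directions of $C^\star$ with respect to $\Lambda^\star$ are nonzero lattice points of $(\Lambda^\star)^\star=\Lambda$. This is not right: by definition (and by Lemma \ref{lemma:successive_minima}), the diameter directions of a body with respect to a lattice $\Gamma$ lie in $\Gamma$ itself, not in $\Gamma^\star$. So the diameter directions of $C^\star$ with respect to $\Lambda^\star$ are nonzero points of $\Lambda^\star$ in $\sukz_1(C^\star-C^\star;\Lambda^\star)(C^\star-C^\star)$. Likewise, the width directions of $C$ with respect to $\Lambda$ are nonzero points of $\Lambda^\star$ in $\sukz_1((C-C)^\star;\Lambda^\star)(C-C)^\star$. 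Both sets already live in $\Lambda^\star$ and inside dilates of $C^\star$.

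Once this is straightened out, no transference identity is needed at all. Since $C=-C$ gives $(C-C)^\star=\tfrac12 C^\star$ and $C^\star-C^\star=2C^\star$, the entire content of part (1) is the trivial homogeneity
\[
\sukz_1(\tfrac12 C^\star;\Lambda^\star)\cdot \tfrac12 C^\star \;=\; \sukz_1(C^\star;\Lambda^\star)\cdot C^\star \;=\; \sukz_1(2C^\star;\Lambda^\star)\cdot 2C^\star,
\]
which holds because $\sukz_1$ is $(-1)$-homogeneous in its first argument. The two prescriptions therefore pick out the same subset of $\Lambda^\star\setminus\{0\}$, and you are done. Your expectation that you would need to ``invoke the standard transference-type identity'' is a symptom of the lattice mix-up; in the paper's argument nothing of the sort appears.
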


\begin{proof}
    For (1), we recall from Lemma \ref{lemma:successive_minima} that the width directions of $C$ with respect to $\Lambda^\star$ are given by 
    \[
    W = \sukz_1((C-C)^\star;\Lambda^\star)(C-C)^\star \cap \Lambda^\star\setminus\{0\}.
    \]
    Since $-C = C$,  we have
    \[\begin{split}
      \sukz_1((C-C)^\star;\Lambda^\star)(C-C)^\star &= \sukz_1\Big( \frac 12 C^\star;\Lambda^\star\Big) \frac 12 C^\star\\
      &=  \sukz_1(2 C^\star;\Lambda^\star) 2 C^\star = \sukz_1(C^\star-C^\star;\Lambda^\star)(C^\star - C^\star),
    \end{split}\]
    where we used that the successive minima are $(-1)$-homogeneous in the first argument and that $C^\star$ is origin-symmetric as well. It follows that
    \[
     W = \sukz_1(C^\star-C^\star;\Lambda^\star)(C^\star - C^\star) \cap \Lambda^\star\setminus\{0\},
    \]
    which proves the claim.

    For (2), we may assume that $C$ is a polytope. Moreover, we can assume that $ \diam_{\Lambda^\star}(C^\star)=2$ (cf.\ Lemma \ref{lemma:properties}). In this case the width directions of $C$ and the diameter directions of $C^\star$ are given by $W=C^\star\cap \Lambda^\star\setminus\{0\}$.
    
    First, let $C^\star$ be complete with respect to $\Lambda^\star$. Consider a vertex $v\in C$ of $C$ and its polar facet $v^\diamond\subset C^\star$. Since $C^\star$ is complete, there exists a diameter direction $y\in W$ and a segment $I=[a,a+2y]\subset C^\star$ such that $a+2y\in\relint v^\diamond$ (cf.\ Proposition \ref{prop:endpoints}). Since $C$ is origin-symmetric, the segment $-I\subset C$ is also diameter realizing and by Lemma \ref{lemma:1} we have $-a\in  v^\diamond$. It follows that $y = \tfrac{1}{2}(a+2y-a)\in\relint v^\diamond$.

    This shows that $y\in\inter(\cone{v^\diamond}) = \inter(\ncone(v;C))$, which means that $y\cdot v > y\cdot x$ for all $x\in C\setminus\{v\}$. Thus, $C$ is reduced with respect to $\Lambda$ by (1) and Proposition \ref{prop:reduced_exposed}. The reverse implication is proved by similar means.
\end{proof}

\begin{remark}\label{rem:non_symmetric_duality}
    There is no direct generalization of Theorem \ref{prop:duality} to arbitrary convex bodies. In fact, if $C$ is not origin-symmetric, $C^\star$ might even be unbounded. Since width and diameter are invariant with respect to translations one could, however, consider $(C-x_0)^\star$ instead, where $x_0$ is a point in $\inter C$. A natural choice for $x_0$ might be the centroid $\centroid(C) = \vol(C)^{-1}\int_C x\,\mathrm dx$. But as we will see in Remarks \ref{rem:impl1} and \ref{rem:3simplex}, we have
    \begin{equation}
    \label{eq:implication1}
    C\text{ is reduced w.r.t.\ $\Lambda$}\centernot\implies~(C-\centroid(C))^\star\text{ is complete w.r.t.\ }\Lambda^\star
    \end{equation}
    and 
    \begin{equation}
    \label{eq:implication2}
    C\text{ is complete w.r.t.\ $\Lambda$}\centernot\implies~(C-\centroid(C))^\star\text{ is reduced w.r.t.\ $\Lambda^\star$}.
    \end{equation}
\end{remark}

\subsection{Constructions and Examples}
\label{ssec:ex}
We start by considering 2-dimensional examples.

\begin{example}
\label{ex:dim2}
In this example, we consider the lattice $\Lambda=\ZZ^2$.
\begin{enumerate}
\item The triangle $T_{\mathrm{orth}}=\conv\{0,e_1,e_2\}$ is reduced, but not complete. Its width is attained by the directions $\pm e_1$, $\pm e_2$ and $\pm \eins$. Each of these directions cut out one of the three vertices of $T_{\mathrm{orth}}$. The diameter of $T_{\mathrm{orth}}$ is attained only by its edges. Thus, $T_{\mathrm{orth}}$ is not complete by Proposition \ref{prop:endpoints}.
\item The square $C_2=[-1,1]^2$ is complete since it is a multiple of the Voronoi cell of $\ZZ^2$ (cf.\ Proposition \ref{prop:voronoi}). It is not reduced since its width is attained by $\pm e_1$ and $\pm e_2$, but these directions support $C_2$ at its edges, not at vertices, thus violating Proposition \ref{prop:reduced_exposed}. By polarity, $C_2^\star=\conv\{\pm e_1,\pm e_2\}$ is reduced, but not complete.

\item Interestingly, the ``twisted squares'' $Q_x = \conv\{\pm\tbinom 1x, \pm\tbinom {x}{-1}\}$ are simultaneously reduced and complete for $x\in (0,1)$; Indeed, both width and diameter are attained by the directions $\pm e_1, \pm e_2$ and, due to the choice of $x$, the corresponding lines support $Q_x$ at its vertices and the corresponding segments pass between two opposite edges (cf.\ Figure \ref{fig:quad} (c)).
\item The pentagon $\conv\{\tbinom{-1}{1},\tbinom 12, \tbinom{2}{0}, \tbinom{0}{-2}, \tbinom {-2}{-1}\}$ is reduced and not complete.
\item The hexagon $H=\conv\{ \pm \tbinom{2}{-1}, \pm\tbinom{-1}{2}, \pm\tbinom 11\}$ is simultaneously reduced and complete.
\end{enumerate}
(4) and (5) can be verified by means of elementary, but rather lengthy, computations.
\end{example}

We continue with our ``standard examples'' for complete and reduced polytopes: The \emph{Voronoi cell} $V_\Lambda$ of a lattice $\Lambda$ is defined as the set of points that are closer to the origin than to any other point in $\Lambda$. Equivalently, we have 
\begin{equation}
    \label{eq:vor_ineq}
    V_\Lambda = \{ x\in\RR^d \colon v\cdot x \leq |v|^2/2,~\forall v\in\Lambda\}.
\end{equation}
Since $\Lambda$ is a discrete set, it follows that $V_\Lambda$ is a polytope. The vectors $v\in\Lambda$ such that the inequality $v\cdot x \leq |v|^2/2$ contributes a facet to $V_\Lambda$ are called \emph{Voronoi relevant}.

\begin{proposition}
\label{prop:voronoi}
Let $\Lambda\subset\RR^d$ be a $d$-lattice. Then, $V_\Lambda$ is complete with respect to $\Lambda$ and $(V_\Lambda)^\star$ is reduced with respect to $\Lambda^\star$.
\end{proposition}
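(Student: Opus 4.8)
The plan is to obtain the second assertion from the first by invoking the origin-symmetric duality already established. Since the defining condition \eqref{eq:vor_cell} is symmetric under $x\mapsto -x$, the body $V_\Lambda$ is an origin-symmetric polytope, hence so is $(V_\Lambda)^\star$, and biduality gives $((V_\Lambda)^\star)^\star=V_\Lambda$ together with $(\Lambda^\star)^\star=\Lambda$. Applying Theorem \ref{prop:duality}(2) to the polytope $(V_\Lambda)^\star$ and the lattice $\Lambda^\star$, the body $(V_\Lambda)^\star$ is reduced with respect to $\Lambda^\star$ if and only if its polar $V_\Lambda$ is complete with respect to $\Lambda$. So it suffices to prove that $V_\Lambda$ is complete with respect to $\Lambda$, and I would concentrate all the work there.

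For completeness I would use the halfspace description $V_\Lambda=\bigcap_{b\in\Lambda\setminus\{0\}}\{x\colon x\cdot b\leq |b|^2/2\}$, which is equivalent to \eqref{eq:vor_cell} and exhibits $V_\Lambda$ as an origin-symmetric polytope. First I compute the diameter. A nonzero $a\in\Lambda$ lies in $t\,V_\Lambda$ only if the constraint $b=a$ holds, i.e.\ $(a/t)\cdot a\leq |a|^2/2$, forcing $t\geq 2$; conversely a shortest lattice vector lies in $2V_\Lambda$. Hence $\sukz_1(V_\Lambda;\Lambda)=2$, and since $V_\Lambda$ is convex and symmetric we have $V_\Lambda-V_\Lambda=2V_\Lambda$, so Lemma \ref{lemma:successive_minima} gives $\diam_\Lambda(V_\Lambda)=\sukz_1(2V_\Lambda;\Lambda)^{-1}=1$. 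The same lemma identifies the diameter directions as the nonzero lattice points of $2V_\Lambda$, that is, the $a\in\Lambda$ with $a/2\in V_\Lambda$.

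The heart of the argument is then to verify the hypothesis of Proposition \ref{prop:endpoints}: every facet $F$ of $V_\Lambda$ must carry the endpoint of a diameter segment in its relative interior. Each facet lies in a hyperplane $H_a=\{x\colon x\cdot a=|a|^2/2\}$ arising from the halfspaces above, for some $a\in\Lambda$. I would first argue $a$ is primitive: if $a=ka'$ with $a'$ primitive and $k\geq 2$, then every $x\in H_a$ satisfies $x\cdot a'=k|a'|^2/2>|a'|^2/2$, so $F$ would violate the constraint from $a'$ and could not lie in $V_\Lambda$. Next I locate the center of $F$ via the reflection $\sigma(x)=a-x$ about $a/2$. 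One checks $\sigma$ preserves $H_a$, and that on $H_a$ every point is equidistant from $0$ and $a$, so that the conditions defining $V_\Lambda$ and $V_\Lambda+a$ agree on $H_a$; this yields $\sigma(F)=(V_\Lambda+a)\cap H_a=V_\Lambda\cap H_a=F$. Thus $F$ is centrally symmetric about its unique $\sigma$-fixed point $a/2$, whence $a/2\in\relint F$. Finally, the segment $[-a/2,a/2]$ lies in $V_\Lambda$ by symmetry and convexity, is parallel to the primitive vector $a$, and has lattice length $|a|/|a|=1=\diam_\Lambda(V_\Lambda)$; hence it is a diameter segment with endpoint $a/2\in\relint F$. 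Proposition \ref{prop:endpoints} then gives completeness of $V_\Lambda$.

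The main obstacle is precisely this facet analysis, namely showing that each facet-defining vector is primitive and that its midpoint $a/2$ falls in the relative interior of the facet. This is exactly the classical structure theory of the Dirichlet--Voronoi cell, and the reflection $x\mapsto a-x$ is the efficient device for packaging it; the remaining ingredients (the diameter computation, the segment length, and the duality reduction) are routine.
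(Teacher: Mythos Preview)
Your proof is correct and follows essentially the same route as the paper: compute $\diam_\Lambda(V_\Lambda)=1$, show that for each facet the segment $[-a/2,a/2]$ lands in its relative interior, apply Proposition \ref{prop:endpoints}, and then invoke Theorem \ref{prop:duality} for the reducedness of $(V_\Lambda)^\star$. The only difference is that the paper outsources the fact ``the Voronoi relevant vector $v_F$ lies in the relative interior of the corresponding facet of $2V_\Lambda$'' to the literature (Horv\'ath), whereas you supply a self-contained argument via the reflection $x\mapsto a-x$ and the observation that $V_\Lambda\cap H_a=(V_\Lambda+a)\cap H_a$; this buys independence from the citation at the cost of a few extra lines.
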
 

\begin{proof}
    It is well known (see for instance \cite[Section 4]{horvath}) that $2V_\Lambda$ contains each Voronoi relevant vector $v_F$ in the relative interior of its corresponding facet $F$ and  moreover, we have $\inter(2V_\Lambda)\cap\Lambda=\{0\}$. Since $V_\Lambda$ is origin-symmetric, we have $\sukz_1(V_\Lambda-V_\Lambda;\Lambda) = \sukz_1(2V_\Lambda;\Lambda) = 1$ and, thus, $\diam_\Lambda(V_\Lambda) = 1$. Again by symmetry around the origin, the diameter is attained by the segments $[v_F,-v_F]$. Hence, Proposition \ref{prop:endpoints} applies and $V_\Lambda$ is indeed complete.
    The statement for $(V_\Lambda)^\star$ follows directly from Proposition \ref{prop:duality}.
\end{proof}

An interesting non-symmetric example is given by the simplices \[S_d = \conv\{-\eins, e_1,\dots,e_d\}\subset\RR^d.\]

\begin{proposition}
    \label{prop:terminal}
    The simplices $S_d\subset\RR^d$ are simultaneously reduced and complete with respect to $\ZZ^d$.
\end{proposition}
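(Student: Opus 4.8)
The plan is to verify the reducedness and completeness criteria (Propositions \ref{prop:reduced_exposed} and \ref{prop:endpoints}) directly, reducing each to an elementary extremal problem about integer $(d+1)$-tuples. A routine computation first identifies the vertices $-\eins, e_1,\dots,e_d$ of $S_d$ together with the facets $F_0 = \{x\in S_d : \sum_k x_k = 1\}$, opposite $-\eins$, and $F_j = \{x\in S_d : \sum_k x_k - (d+1)x_j = 1\}$, opposite $e_j$, for $j\in[d]$.

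For reducedness I first determine the width. Setting $w_0 = -\eins$ and $w_j = e_j$, for $y\in\ZZ^d$ we have $\wdt(S_d;y) = \max_j (y\cdot w_j) - \min_j(y\cdot w_j)$, and the numbers $y\cdot w_j$ are precisely the entries of the integer tuple $\tilde y = (-\sum_k y_k,\, y_1,\dots,y_d)$, whose coordinates sum to zero. As $y$ runs through $\ZZ^d\setminus\{0\}$, the tuple $\tilde y$ runs through all non-constant zero-sum integer $(d+1)$-tuples, so $\wdt_{\ZZ^d}(S_d)$ is the least possible range $\max\tilde y - \min\tilde y$ of such a tuple. A short case check shows this range cannot equal $1$ and is attained with value $2$ (e.g.\ by $(1,-1,0,\dots,0)$), so $\wdt_{\ZZ^d}(S_d)=2$. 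It then remains to select each vertex by a width direction: $y=e_i$ has $\wdt(S_d;e_i)=2$ and attains its maximum over $S_d$ only at $e_i$, while $y=-e_1$ selects $-\eins$ in the same way. By Proposition \ref{prop:reduced_exposed}, $S_d$ is reduced.

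For completeness I compute the diameter via Lemma \ref{lemma:successive_minima}(2), i.e.\ through $\sukz_1(S_d-S_d;\ZZ^d)$. Writing points of $S_d-S_d$ as $\sum_{i=0}^d \gamma_i w_i$ with $\sum_i\gamma_i=0$ and $\sum_i|\gamma_i|\le 2$, and using $\sum_i\gamma_i w_i = (\gamma_k-\gamma_0)_{k\in[d]}$, one checks that the zero-sum representative of a given point is unique; this yields the closed form $\|z\|_{S_d-S_d} = \tfrac12\sum_{i=0}^d |m_i-\bar m|$ for $z\in\ZZ^d$, where $m=(0,z_1,\dots,z_d)$ and $\bar m$ is its mean. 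Hence $\sukz_1(S_d-S_d;\ZZ^d)$ equals half the minimal sum of absolute deviations from the mean over non-constant integer $(d+1)$-tuples, which I claim is $\tfrac{d}{d+1}$, attained at $z=e_i$ and $z=\eins$. Therefore $\diam_{\ZZ^d}(S_d)=\tfrac{d+1}{d}$, and the segments $[-\tfrac1d e_i,e_i]$ (parallel to $e_i$) and $[-\eins,\tfrac1d\eins]$ (parallel to $\eins$) have lattice length $\tfrac{d+1}{d}$, i.e.\ are diameter segments. Their endpoints $-\tfrac1d e_i$ and $\tfrac1d\eins$ lie in $\relint F_i$ and $\relint F_0$ respectively, so every facet contains an endpoint of a diameter segment in its relative interior, and Proposition \ref{prop:endpoints} shows that $S_d$ is complete.

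The main obstacle, in both parts, is the extremal estimate over integer tuples, most critically the lower bound $\sum_{i=0}^d |m_i-\bar m|\ge \tfrac{2d}{d+1}$ for every non-constant integer $(d+1)$-tuple $m$, which controls the diameter. I would prove it by splitting into the cases $\bar m\notin\ZZ$ and $\bar m\in\ZZ$: setting $P=\sum_{m_i>\bar m}(m_i-\bar m)=\tfrac12\sum_i|m_i-\bar m|$ and letting $k$ be the number of above-mean entries, the integrality $m_i\ge\lceil\bar m\rceil$ above and $m_i\le\lfloor\bar m\rfloor$ below the mean gives $P\ge\max\big(k(1-\fractional{\bar m}),\,(d+1-k)\,\fractional{\bar m}\big)$, after which minimizing the right-hand side over $k$ and over the fractional part yields the bound. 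Finally, a convenient shortcut avoiding the repetition of the per-vertex and per-facet verifications is to observe that the coordinate permutations of $e_1,\dots,e_d$, together with the unimodular involution $A$ defined by $Ae_1=-\eins$, $Ae_j=e_j$ for $j\ge 2$ (which swaps $e_1$ and $-\eins$), generate a group of lattice automorphisms of $S_d$ acting transitively on both its vertices and its facets; by Lemma \ref{lemma:properties}(3) it then suffices to exhibit a selecting width direction for a single vertex and a diameter segment for a single facet.
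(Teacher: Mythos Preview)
Your proof is correct. The reducedness half is essentially the paper's argument: both exhibit the $e_i$ as width directions that isolate each vertex and apply Proposition~\ref{prop:reduced_exposed}; the only difference is that the paper pins down $\wdt_{\ZZ^d}(S_d)=2$ via the slick observation ``integer vertices $\Rightarrow$ integer width, origin interior $\Rightarrow$ width $>1$'', whereas you argue directly that a non-constant zero-sum integer $(d{+}1)$-tuple has range at least $2$.

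The completeness half takes a genuinely different route. The paper writes down all the facets of $S_d-S_d$ indexed by subsets $J\subseteq[d]$ and then, for a non-zero integer point in $\tfrac{d}{d+1}(S_d-S_d)$, picks $J=\{j:v_j=1\}$ to force the corresponding facet inequality to be tight. You instead derive the closed form $\|z\|_{S_d-S_d}=\tfrac12\sum_{i=0}^d|m_i-\bar m|$ for the Minkowski gauge (using that the unique affine relation among the $w_i$ is $\sum_i w_i=0$) and reduce the successive-minimum computation to the extremal bound $\sum_i|m_i-\bar m|\ge \tfrac{2d}{d+1}$ for non-constant integer tuples, proved by the $k$-above/$k$-below count. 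Your argument is coordinate-free and treats all diameter directions symmetrically, at the cost of an extra optimisation step; the paper's facet computation is more hands-on but makes the boundary structure of $S_d-S_d$ explicit. Your final symmetry remark---that the permutations of $e_1,\dots,e_d$ together with the unimodular involution swapping $e_1$ and $-\eins$ act transitively on vertices and on facets---is a tidy way to cut the per-vertex and per-facet checks down to one instance each; the paper does not make this observation.
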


\begin{proof}
    We start with the reducedness of $S_d$. Since $S_d$ has integer vertices, we have $\wdt_{\ZZ^d}(S_d)\in\ZZ_{>0}$. The vectors $e_i$, $1\leq i \leq d$, show that $\wdt_{\ZZ^d}(S_d)\leq 2$ and the fact that the origin is an interior lattice point of $S_d$ gives $\wdt_{\ZZ^d}(S_d) > 1$. Hence, $\wdt_{\ZZ^d}(S_d) =2$, realized by the standard basis vectors of $\ZZ^d$. The linear functions $x\mapsto x\cdot e_i$ are uniquely maximized by the vertices $e_i\in S_d$ and uniquely minimized by the vertex $-\eins\in S_d$. Therefore, $S_d$ is reduced by Proposition \ref{prop:reduced_exposed}.

    For the completeness of $S_d$ we observe that the facets of $C = S_d-S_d$ are of the form $F=G-G^\prime$, where $G\subset S_d$ is a proper face of $S_d$ and $G^\prime$ is its complementary face, i.e., the convex hull of the vertices of $S_d$ which are not in $F$. So the vertices of $G$ and $G^\prime$ partition $\{-\eins, e_1,\dots,e_d\}$ and we shall assume that $-\eins\in G^\prime$. Let $J\subseteq [d]$ be the set of indices $j$ such that $e_j\in G$ .  A simple computation then shows that
    \[\begin{split}
        \aff F &= \aff\{e_j-e_i,e_j+\eins \colon j\in J,~i\not\in J\}\\
        &= \left\{ x\in \RR^d\colon \frac{d-|J|+1}{d+1} \sum_{j\in J} x_j - \frac{|J|}{d+1}\sum_{j\not\in J}x_j = 1\right\}
    \end{split}\]
    and, thus, 
    \begin{equation}
    \label{eq:sdminussd}
        C =\left \{x\in\RR^d\colon \left| \frac{d-|J|+1}{d+1} \sum_{j\in J} x_j - \frac{|J|}{d+1}\sum_{j\not\in J}x_j \right| \leq 1,~\forall J\subseteq [d],J\neq\emptyset \right\}.
    \end{equation}
    Now we claim that $\diam_{\ZZ^d}(S_d) = 1 + \tfrac 1d$. First, we observe that the segment $I_0 = [-\eins, \tfrac{1}{d}\eins]\subset S_d$ has lattice length $1+\tfrac{1}{d}$, which shows that $\diam_{\ZZ^d}(S_d)\geq 1+\tfrac{1}{d}$, or, in other words, $\sukz_1(C;\ZZ^d)\leq \tfrac{d}{d+1}$.  It, thus, follows from $S_d\subset [-1,1]^d$ that $\diam_{\ZZ^d}(S_d)$ is attained by a non-zero vector $v\in \tfrac{d}{d+1} C \cap \ZZ^d\subset [-1,1]^d$. We may assume that $J=\{j\in [d]\colon v_j=1\}$ is non-empty (otherwise, we consider $-v$ instead). For $j\not\in J$, we have $v_j \leq 0$ and we obtain
    \[
        \frac{d-|J|+1}{d+1}\sum_{j\in J} v_j - \frac{|J|}{d+1}\sum_{j\not\in J}v_j \geq |J|\frac{d-|J|+1}{d+1}\geq \frac{d}{d+1}.
    \]
    So the only non-zero integer points in $\tfrac{d}{d+1}C$ lie in the boundary. This shows $\sukz_1(C;\ZZ^d) = \frac{d}{d+1}$ and thus $\diam_{\ZZ^d}(S_d) = 1+\tfrac{1}{d}$.

    Now it suffices to observe that for any $i\in [d]$, the segments $I_i = [e_i, -\tfrac 1d e_i]$ have lattice length $1+\tfrac 1d$ and are contained in $S_d$; indeed,
    \[
    -\frac 1d e_i = \frac 1d (-\eins + \sum_{j\neq i} e_j) \in\relint F_i,
    \]
    where $F_i\subset S_d$ denotes the facet opposite to $e_i$. Thus, $S_d$ is complete according to Proposition \ref{prop:endpoints}.
\end{proof}

\begin{remark}
    \label{rem:impl1}
    The simplex $S_d$ proves \eqref{eq:implication1}. Its centroid is the origin and its polar $S_d^\star$ is equivalent to the orthogonal simplex $\conv\{0,e_1,\dots,e_d\}$, which is not complete.
\end{remark}

The following classical constructions preserve completeness and reducedness respectively.

\begin{lemma}
\label{lemma:constr}
Let $P\subset \RR^k$ and $Q\subset\RR^l$ be full-dimensional polytopes.
\begin{enumerate}
\item If $P$ and $Q$ are complete with respect to $\ZZ^k$ and $\ZZ^l$ respectively and if $\diam_{\ZZ^k}(P) = \diam_{\ZZ^l}(Q)$, then their Cartesian product $P\times Q$ is complete with respect to $\ZZ^{k+l}$.
\item If $0\in\inter P$, $0\in \inter Q$, and if $P$ and $Q$ are reduced with respect to $\ZZ^k$ and $\ZZ^l$ respectively, such that $\wdt_{\ZZ^k}(P)=\wdt_{\ZZ^l}(Q)$, then their free sum
\[
P \oplus Q = \conv\big( (P\times \{0\}^l)\cup (\{0\}^k\times Q)\big) \subset\RR^{k+l}
\]
is reduced with respect to $\ZZ^{k+l}$.

\item Under the same conditions of (2), the join of height $h$
\[
P \star Q = \conv\big((P\times \{0\}^{l+1})\cup (\{0\}^k\times Q\times \{ h \})\big)\subset\RR^{k+l+1},
\]
where $|h|\geq \wdt_{\ZZ^k}(P)$, is reduced with respect to $\ZZ^{k+l+1}$.

\end{enumerate}
\end{lemma}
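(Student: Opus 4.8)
The plan is to treat the completeness statement (1) via the facet characterization in Proposition \ref{prop:endpoints} and the two reducedness statements (2) and (3) via the exposed-point characterization in Proposition \ref{prop:reduced_exposed}. In each case the work splits into two tasks: first compute the relevant global quantity (the diameter for the product, the width for the free sum and the join) and identify the faces of the construction, and then exhibit, for every facet (resp. every vertex), a suitable diameter segment (resp. width direction) inherited from $P$ or $Q$.

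For (1), I would first record that $(P\times Q)-(P\times Q)=(P-P)\times(Q-Q)$ and that the first successive minimum of a product of origin-symmetric bodies with respect to $\ZZ^k\times\ZZ^l$ is the minimum of the two individual first minima (a nonzero lattice point in the scaled product can be chosen supported on a single factor). By Lemma \ref{lemma:successive_minima} this gives $\diam_{\ZZ^{k+l}}(P\times Q)=\max(\diam_{\ZZ^k}(P),\diam_{\ZZ^l}(Q))$, which equals the common value by hypothesis. The facets of $P\times Q$ are exactly $F\times Q$ with $F$ a facet of $P$ and $P\times G$ with $G$ a facet of $Q$. Given a facet $F\times Q$, I take the diameter segment $[x,y]\subset P$ with $x\in\relint F$ provided by Proposition \ref{prop:endpoints}, fix an interior point $q_0\in\inter Q$, and observe that $[(x,q_0),(y,q_0)]$ is a segment of the same lattice length (its primitive direction is $(v_I,0)$, still primitive in $\ZZ^{k+l}$) with endpoint $(x,q_0)\in\relint F\times\inter Q=\relint(F\times Q)$. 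The symmetric argument handles the facets $P\times G$, so Proposition \ref{prop:endpoints} yields completeness.

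For (2) and (3), I would first note that, because $0\in\inter P$ and $0\in\inter Q$, the vertices of $P\oplus Q$ are the images of the vertices of $P$ and of $Q$, and likewise for $P\star Q$. The support functions split as $h(P\oplus Q;(u,v))=\max(h(P;u),h(Q;v))$ and $h(P\star Q;(u,v,s))=\max(h(P;u),h(Q;v)+sh)$, from which one computes that the width of the free sum equals the common value $w:=\wdt_{\ZZ^k}(P)=\wdt_{\ZZ^l}(Q)$; the same holds for the join once the estimate below is established. Writing $u$ for a width direction of $P$ that selects a vertex $v_0$ uniquely (it exists by Proposition \ref{prop:reduced_exposed}), the extended direction $(u,0)$ (resp. $(u,0,0)$) achieves width $w$ and selects the corresponding vertex uniquely, because any point of the construction has first block $\lambda p$ and the value $\lambda\,u\cdot p\le\lambda\,h(P;u)$ is maximized only at $\lambda=1,\ p=v_0$. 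Symmetrically, a width direction $v$ of $Q$ selecting a vertex $w_0$, extended by zeros to $(0,v)$ (resp. $(0,v,0)$), selects the corresponding vertex uniquely, since there the relevant value is $(1-\lambda)\,v\cdot q\le(1-\lambda)h(Q;v)$, maximized only at $\lambda=0,\ q=w_0$ (note that for the join the nonzero height coordinate of this vertex is irrelevant, as the direction has last coordinate $0$). Hence every vertex is uniquely selected and Proposition \ref{prop:reduced_exposed} gives reducedness.

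The main obstacle is the width computation for the join in (3), and this is precisely where the height hypothesis $|h|\ge\wdt_{\ZZ^k}(P)$ is needed. For directions $(u,v,0)$ with vanishing last coordinate the bound $\wdt(P\star Q;(u,v,0))\ge w$ follows as in the free-sum case. For a direction $(u,v,s)$ with $s\neq0$ one may assume $s\ge1$ and $h>0$ after replacing the direction by its negative; then, using that all support functions at $0\in\inter P,\inter Q$ are non-negative, $\max(h(P;u),h(Q;v)+sh)\ge h(Q;v)+sh\ge h$ while $\max(h(P;-u),h(Q;-v)-sh)\ge h(P;-u)\ge0$, so $\wdt(P\star Q;(u,v,s))\ge h\ge w$. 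This shows no skew direction undercuts $w$, which both pins down the global width as $w$ and confirms that the inherited directions from $P$ and $Q$ are genuine width directions. The remaining arguments are routine once the splitting of the support functions and the vertex description are in place.
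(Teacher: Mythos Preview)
Your proposal is correct and follows the same overall strategy as the paper: verify the diameter/width of the construction and then apply Proposition \ref{prop:endpoints} (for the product) and Proposition \ref{prop:reduced_exposed} (for the free sum and the join). The only noteworthy differences are in the auxiliary computations: for (1) you obtain $\diam(P\times Q)=\max(\diam P,\diam Q)$ via the product formula for $\lambda_1$, whereas the paper argues by projecting an arbitrary lattice segment onto one factor; for (2) you compute $\wdt(P\oplus Q)$ directly from the support-function identity $h(P\oplus Q;(u,v))=\max(h(P;u),h(Q;v))$, whereas the paper simply cites \cite[Theorem~2.2(3)]{codenottisantos}. For (3) the paper's bound in the case $z_{k+l+1}\neq 0$ is slightly slicker than yours: since $0\in P$ and $0\in Q$, both $(0,0,0)$ and $(0,0,h)$ lie in $P\star Q$, so $\wdt(P\star Q;z)\ge |z_{k+l+1}|\cdot|h|\ge|h|\ge w$ in one line, avoiding the case split on the sign of $h$.
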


\begin{proof}
In the following, let $\diam = \diam_{\ZZ^n}$ and $\wdt = \wdt_{\ZZ^n}$, where the ambient dimension $n$ will be clear from the context.

For (1), we start by proving $\diam(P\times Q)=\diam(P)$. Clearly, any rational segment in $P$ is also a rational segment in $P\times Q$ with the same length and so we have $\diam(P\times Q) \geq \diam(P)$. Conversely, let $I = [(a,a^\prime),(b,b^\prime)]\subset P\times Q$ be a rational segment, where $a,b\in P$ and $a^\prime, b^\prime \in Q$. If $I$ is orthogonal to $\{0\}^k\times \RR^l$, i.e., if $a^\prime = b^\prime$, then $\Vol_1(I) = \Vol_1([a,b])\leq \diam(P)$. If $I$ is not orthogonal to $\{0\}^k\times \RR^l$, then the projection
\[
\Pi: P\times Q \to Q,~(x,y)\mapsto y
\]
restricted to the segment $I$ is injective and maps lattice points to lattice points. It follows that \[\Vol_1(I) \leq \Vol_1(\Pi (I))\leq \diam(Q) = \diam(P).\]
This proves $\diam(P\times Q) = \diam(P)$. In order to see that $P\times Q$ is complete, we note that any facet of $P\times Q$ is of the form $F\times Q$ or $P\times G$, where $F\subset P$ and $G\subset Q$ are facets of $P$ and $Q$, respectively. For a facet of the form $F\times Q$, there exists a point $x\in\relint F$ and a diameter segment $I_F = [x,x^\prime]\subset P$, for some $x^\prime \in P$, since $P$ is complete. Let $y\in\inter Q$ be any interior point of $Q$. The segment $[(x,y), (x^\prime,y)]\subset P\times Q$ is diameter realizing for $P\times Q$ and the point $(x,y)$ is in the relative interior of $F\times Q$. A facet of the type $P\times G$ is treated analogously.

For (2), we use \cite[Theorem 2.2, (3)]{codenottisantos}, which shows that $\wdt(P\oplus Q) = \wdt(P)$. In order to see that $P\oplus Q$ is reduced, we note that the vertices of $P\oplus Q$ are of the form $(v,0)$, or $(0,w)$, where $v\in P$ and $w\in Q$ are vertices of $P$ and $Q$, respectively. A width-realizing lattice direction $y\in \ZZ^k$, which is uniquely maximized at $v$ leads to a width-realizing direction $(y,0)\in\ZZ^{k+l}$, which is uniquely maximized by $(v,0)$. Thus, $P\oplus Q$ is reduced.

For (3), we first consider a vector $z\in\ZZ^{k+l}\times\{0\}$. By projecting onto $\RR^{k+l}\times\{0\}$, we see that $\wdt(P\star Q; z) = \wdt(P\oplus Q; z)\geq \wdt(P) $, with equality, if $z$ is the canonical embedding of one of the width directions of $P$ or $Q$. If $z\in\ZZ^{k+l+1}$ with $z_{k+l+1}\neq 0$, we have
\begin{equation}
\label{eq:join}
\wdt(P\star Q;z) \geq |z\cdot((0,h)-(0,0))|\geq h \geq \wdt(P).
\end{equation}
Thus, $\wdt(P\star Q) = \wdt(P)$, which is attained by the canonical embedding of the width directions of $P$ and $Q$. These vectors also testify that $P\star Q$ is reduced.
\end{proof}

Lemma \ref{lemma:constr} and its proof lead us to a first example of a reduced 3-polytope whose width directions do not span its ambient space.

\begin{example}
    \label{ex:join}
    The simplex $S = \conv\{ (-1,0,0)^T,(1,0,0)^T,(0,1,3)^T,(0,-1,3)^T\}$ is reduced since it arises as a join as in Lemma \ref{lemma:constr}, where $P = Q = [-1,1]$ and $h=3$. As we saw in the proof, the width directions of $S$, $\pm e_1$ and $\pm e_2$, are given by the canonical embedding of the width directions of $P$. Since $h>2$, the inequality in \eqref{eq:join} is strict, so we have no width direction outside $\spann\{e_1,e_2\}$.
\end{example}

The remainder of this subsection is devoted to a lifting construction which allows us to construct origin-symmetric complete (reduced) polytopes whose diameter (width) directions are not full-dimensional.
The basic idea of the construction is to lift the normal vectors of a polytope $P\subset\RR^d$ into $\RR^{d+1}$ to obtain a new polytope $\overline P \subset\RR^{d+1}$ such that $\overline P \cap e_{d+1}^\perp = P$ and then adjust the lattice $\Lambda\subset\RR^{d+1}$ afterwards such that the lifted polytope is indeed complete.

The following lemma makes this geometric idea precise.

\begin{lemma}
    \label{lemma:lifting}
    Let $P=\{x\in\RR^d \colon |x\cdot a_i|  \leq 1,~1\leq i \leq m\}$ be an origin-symmetric $d$-polytope. Suppose that $m>d$ and each $a_i$ contributes a pair of facets to $P$. Then there exist numbers $\eta_1,\dots,\eta_m\in\RR$ such that the polyhedron
    \[
       \overline{P}= \{y\in \RR^{d+1} \colon  |y\cdot  \overline{a_i}|\leq 1,~1\leq i \leq m\}, 
    \]
    where $\overline{a_i}=(a_i,\eta_i)^T\in\RR^{d+1}$, is a bounded origin-symmetric $(d+1)$-polytope with $2m$ facets and $\overline{P}\cap e_{d+1}^
    \perp =P$. 
\end{lemma}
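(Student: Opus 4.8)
The plan is to reduce everything to a statement about the polar polytope, where the claim becomes an elementary fact about lifting points that are in convex position. First I observe that several of the required properties hold for \emph{every} choice of $\eta=(\eta_1,\dots,\eta_m)$. Indeed, the defining inequalities $|y\cdot\overline{a_i}|\le 1$ are invariant under $y\mapsto -y$, so $\overline P$ is origin-symmetric; and a point $(x,0)$ lies in $\overline P$ if and only if $|(x,0)\cdot(a_i,\eta_i)|=|x\cdot a_i|\le 1$ for all $i$, so $\overline P\cap e_{d+1}^\perp=P$ irrespective of $\eta$. Moreover $0$ satisfies every constraint strictly, hence $0\in\inter\overline P$ and $\overline P$ is full-dimensional. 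Thus only boundedness and the exact facet count $2m$ remain to be arranged.

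For boundedness, recall that a symmetric polyhedron $\{y:|y\cdot\overline{a_i}|\le1\}$ is bounded exactly when the normals $\overline{a_i}$ span $\RR^{d+1}$. Since $P$ is a $d$-polytope, the $a_i$ span $\RR^d$, and the vectors $\overline{a_i}=(a_i,\eta_i)$ fail to span $\RR^{d+1}$ only if some $(c,s)\neq 0$ is orthogonal to all of them. The case $s=0$ forces $c=0$ because the $a_i$ span, while $s\neq0$ forces $\eta$ to lie in the subspace $L=\{(c\cdot a_i)_{i=1}^m : c\in\RR^d\}\subseteq\RR^m$. As $\dim L\le d<m$, this $L$ is a proper subspace, so I may simply pick any $\eta\in\RR^m\setminus L$; then the $\overline{a_i}$ span and $\overline P$ is bounded.

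It remains to count facets, and here I pass to polars. Writing $Q=P^\star=\conv\{\pm a_i\}$ and $\overline Q=\overline P^\star=\conv\{\pm\overline{a_i}\}$, the hypothesis that each $a_i$ contributes a pair of facets to $P$ says precisely that all $2m$ points $\pm a_i$ are vertices of $Q$. Because $\overline P$ is bounded with $0\in\inter\overline P$, polarity gives a bijection between facets of $\overline P$ and vertices of $\overline Q$, so it suffices to show that all $2m$ points $\pm\overline{a_i}$ are vertices of $\overline Q$. This is the crux, and it follows from a projection argument that is independent of $\eta$: if $\overline{a_j}$ were a convex combination of the remaining points $\pm\overline{a_i}$, then projecting onto the first $d$ coordinates would express $a_j$ as a convex combination of the points $\pm a_i$ carrying zero weight on $a_j$ itself, contradicting the fact that the vertex $a_j$ of $Q$ admits only the trivial representation as a convex combination of the vertices. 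Hence every $\pm\overline{a_i}$ is a vertex of $\overline Q$; these $2m$ points are pairwise distinct since the $\pm a_i$ are, so $\overline Q$ has exactly $2m$ vertices and $\overline P$ exactly $2m$ facets.

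The only genuine freedom in the argument is the choice made for boundedness; the preservation of the vertex/facet structure is automatic, which is what makes the lifting succeed for essentially any admissible height vector. I expect the main point to articulate carefully to be this last step — that lifting cannot destroy extremality — together with the bookkeeping that the $2m$ lifted points remain distinct and that polar duality transports the vertex count back to a facet count of $\overline P$.
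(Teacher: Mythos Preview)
Your proof is correct and, in fact, proves a bit more than the paper does: you show that \emph{any} choice of $\eta\notin L$ works, whereas the paper exhibits one particular lift.

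Both arguments pass to the polar $\overline P^\star=\conv\{\pm\overline{a_i}\}$ and reduce the facet count of $\overline P$ to a vertex count there. The paper then invokes Carath\'eodory/Steinitz to select $d$ of the $a_i$ whose symmetric hull already contains the origin in its interior, and sets $\eta_m=1$, $\eta_i=0$ otherwise, so that $\overline P^\star$ becomes (essentially) a bipyramid over $\conv\{\pm a_1,\dots,\pm a_{m-1}\}$; the required properties are then read off from this explicit shape. Your route is different: you observe that extremality of $\overline{a_j}$ follows for \emph{every} $\eta$ by projecting a putative nontrivial convex representation back to $\RR^d$ and contradicting that $a_j$ is a vertex of $P^\star$; boundedness is then the only thing that constrains $\eta$, and you handle it by the linear-algebra count $\dim L\le d<m$. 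This avoids the Steinitz step entirely and makes transparent why the construction is robust (the admissible $\eta$'s form a dense open set), at the cost of being non-constructive about which $\eta$ to take. The paper's version, conversely, hands you a concrete lift but obscures the fact that vertex preservation is automatic.
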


\begin{proof}
    It is clear that $\overline{P}\cap e_{d+1}^\perp =P$ holds for any choice of $\eta_1,\dots,\eta_m\in\RR$.
    By polarity, it suffices to prove that we can choose $\eta_1,\dots,\eta_m$ such that $\overline{P}^\star = \conv \{\pm \overline{a_1},\dots,\pm\overline{a_m}\}$ contains the origin in its interior and has all of the vectors $\overline{a}_i$ as vertices. Again by polarity, we have that $0\in \inter P^\star = \conv \{\pm a_1,\dots,\pm a_m\}$. Similarly to the proof of Steinitz' theorem \cite[Theorem 1.3.10]{schneider}, we may consider an arbitrary line $\ell$ passing through the origin and apply Caratheodory's theorem in the two opposite facets of $P^\star$ intersecting $\ell$ in order to choose $d$ linearly independent vectors $a_{i_1},\dots,a_{i_d}$ of the vectors $a_i$ such that the origin is contained in the interior of $\conv\{\pm a_{i_1},\dots,\pm a_{i_d}\}$.
    Since $m>d$, we may assume that $a_m$ is not among the vectors $a_{i_1},\dots, a_{i_d}$. We set $\eta_1 =\cdots = \eta_{m-1} = 0$ and $\eta_m = 1$. This way, $\overline{P}^\star$ is a double pyramid over $\conv\{\pm a_1,\dots,\pm a_{m-1}\}$, which contains the origin in its interior and the vectors $\overline{a_i}$, $1\leq i\leq m$, as vertices.
\end{proof}

If we perform a lifting of the normal vectors of $P$ as in the above Lemma and extend a lattice $\Lambda\subset\RR^d$ to a $(d+1)$-dimensional lattice $\overline{\Lambda}\subset\RR^{d+1}$ by adding a vector ``sufficiently far away from $\spann\Lambda$'' we can lift a complete polytope $P\subset\RR^d$ to a complete polytope $\overline{P}\subset\RR^{d+1}$ (cf.\ Figure \ref{fig:lifting}). This is formalized in the following proposition. 

\begin{figure}
    \centering
    \includegraphics[width = .7\textwidth]{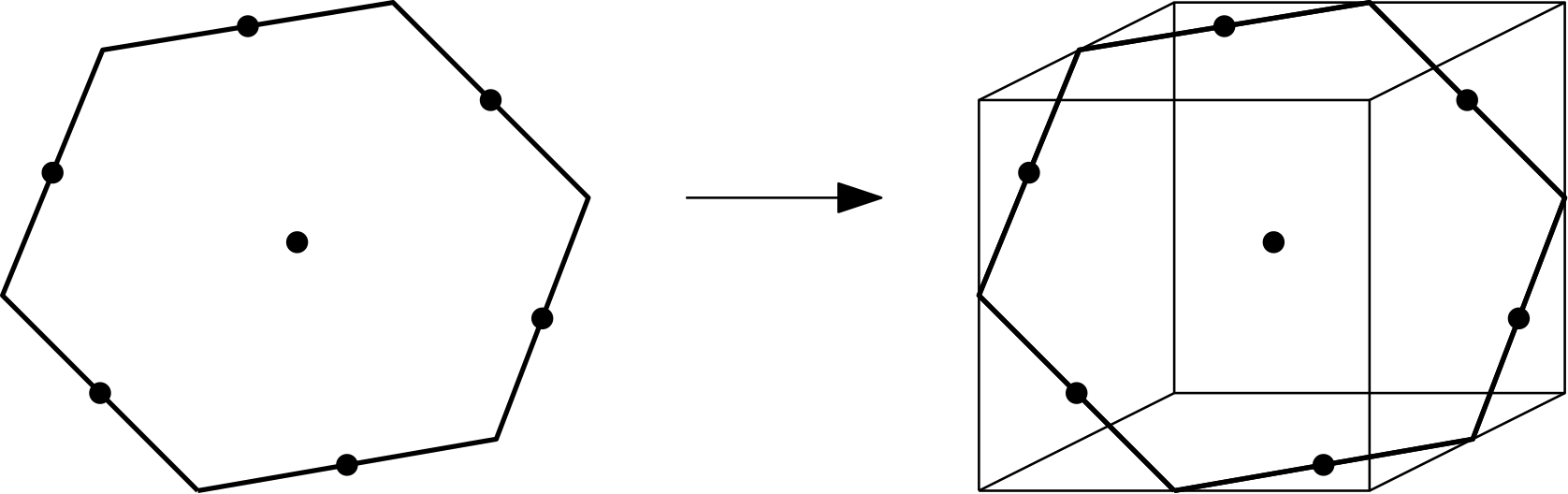}
    \caption{Lifting the normal vectors of the complete hexagon $P$ into $\RR^3$ yields a parallelotope $\overline{P}$ that is complete with respect to the 2-lattice $\Lambda$. The lattice $\Lambda$ can easily be extended to a 3-lattice $\overline{\Lambda}$ to which $\overline{P}$ is complete by choosing a plane parallel to $\Lambda$ which does not intersect $\overline{P}$.}
    \label{fig:lifting}
\end{figure}

\begin{proposition}
\label{prop:lifting}
Let $P\subset\RR^d$ be a complete origin-symmetric $d$-polytope with respect to some $d$-dimensional lattice $\Lambda$. If $P$ has more than $2d$ facets, there exists an origin-symmetric polytope $\overline{P}\subset\RR^{d+1}$ and a $(d+1)$-dimensional lattice $\overline{\Lambda}\subset\RR^{d+1}$ such that the following holds:
\begin{enumerate}
    \item $\overline{P}$ arises from $P$ via a lifting of the normals as in Lemma \ref{lemma:lifting}.
    \item $\overline{\Lambda}$ contains $\Lambda$ as a primitive sub-lattice.
    \item $\overline{P}$ is a complete polytope w.r.t.\ $\overline{\Lambda}$.
    \item The diameter directions of $\overline{P}$ are the same than the ones of $P$ (when the latter is embedded in $\RR^{d+1}$).
\end{enumerate}
\end{proposition}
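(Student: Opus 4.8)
The plan is to carry out the lifting of the normals from Lemma \ref{lemma:lifting} and then choose $\overline{\Lambda}$ concretely so that the completeness of $\overline{P}$ follows from the completeness of $P$ via Proposition \ref{prop:endpoints}. Since $P$ has more than $2d$ facets, Lemma \ref{lemma:lifting} applies and produces an origin-symmetric $\overline{P}\subset\RR^{d+1}$ with $\overline{P}\cap e_{d+1}^\perp = P$ whose facets are the lifts of the facets of $P$. I would fix the lattice as $\overline{\Lambda} = \Lambda + \ZZ\cdot w$, where $w = (0,\dots,0,N)^T$ for a large $N$ (more precisely, a vector whose $e_{d+1}$-component is large compared to the extent of $\overline{P}$ in the $e_{d+1}$-direction, and whose projection onto $\spann\Lambda$ is chosen to keep $\Lambda$ primitive in $\overline{\Lambda}$). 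With this choice $\Lambda = \overline{\Lambda}\cap\spann\Lambda = \overline{\Lambda}\cap e_{d+1}^\perp$, which is exactly the primitivity statement (2), and assertion (1) is Lemma \ref{lemma:lifting} itself.

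The heart of the argument is showing (3) and (4) together. First I would verify that $\diam_{\overline{\Lambda}}(\overline{P}) = \diam_\Lambda(P)$ and that the diameter directions of $\overline{P}$ coincide with those of $P$ (viewed in $\RR^{d+1}$ via the embedding $\RR^d\hookrightarrow\RR^d\times\{0\}$). The inequality $\diam_{\overline{\Lambda}}(\overline{P})\geq\diam_\Lambda(P)$ is immediate since every diameter segment of $P$ lives in the slice $\overline{P}\cap e_{d+1}^\perp = P$ and has primitive direction in $\Lambda\subset\overline{\Lambda}$. For the reverse inequality, I would use the structure of $\overline{\Lambda}$: any $v\in\overline{\Lambda}\setminus\Lambda$ has $e_{d+1}$-component a nonzero multiple of $N$, so a lattice segment of $\overline{P}$ parallel to such a $v$ must span at least $N$ units in the $e_{d+1}$-direction to contain two lattice points; but $\overline{P}$ is bounded, so for $N$ large enough no such segment can have lattice length reaching $\diam_\Lambda(P)$ unless it is shorter. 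Hence the diameter is attained only by directions $v\in\Lambda$, i.e.\ by segments parallel to $e_{d+1}^\perp$, and among those the longest ones lie in the central slice $P$; this gives $\diam_{\overline{\Lambda}}(\overline{P}) = \diam_\Lambda(P)$ and simultaneously establishes (4).

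With the diameter directions identified, completeness of $\overline{P}$ follows from Proposition \ref{prop:endpoints}: I must exhibit, for each facet $\overline{F}$ of $\overline{P}$, a diameter segment with an endpoint in $\relint\overline{F}$. Each facet $\overline{F}$ is the lift of a facet $F$ of $P$, and by completeness of $P$ (again via Proposition \ref{prop:endpoints}) there is a diameter segment $I_F\subset P$ with an endpoint $x_F\in\relint F$. The plan is to show that $x_F$, viewed in the slice, lies in $\relint\overline{F}$ and that $I_F$ remains a diameter segment of $\overline{P}$; since origin-symmetry gives the opposite facet $-\overline{F}$ the diameter segment $-I_F$ for free, this covers all $2m$ facets. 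The point $x_F$ lies in the relative interior of $\overline{F}$ because $\overline{F}$ projects onto $F$ and $x_F$ is interior to $F$ within the slice; here I would use that the lift does not create new boundary structure transverse to $e_{d+1}^\perp$ along $\overline{F}$, which is where the explicit form of $\overline{P}$ from Lemma \ref{lemma:lifting} is needed.

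The main obstacle I anticipate is making the relative-interior claim rigorous: a priori $x_F\in\relint F$ in the hyperplane $e_{d+1}^\perp$ need not force $x_F\in\relint\overline{F}$ inside the $d$-dimensional facet $\overline{F}$ of $\overline{P}$, because $\overline{F}$ is tilted out of the slice. I would handle this by arguing that the supporting hyperplane of $\overline{P}$ along $\overline{F}$ meets $e_{d+1}^\perp$ exactly in the supporting hyperplane of $P$ along $F$, so that $\overline{F}\cap e_{d+1}^\perp = F$ and the relative interiors match under the slice; this in turn rests on the fact that the normal $\overline{a_i}$ of $\overline{F}$ restricts to the normal $a_i$ of $F$. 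Once that identification is clean, combining it with the diameter computation of the second paragraph and Proposition \ref{prop:endpoints} completes the proof.
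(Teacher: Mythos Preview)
Your proposal is correct and is essentially the paper's proof. The paper likewise sets $\overline{\Lambda}=(\Lambda\times\{0\})\oplus\ZZ t$ with $t$ chosen so that the translate $t+\spann\Lambda$ misses $\overline{P}$ (your large-$N$ vector is a concrete instance), uses origin-symmetry together with Lemma~\ref{lemma:successive_minima} to identify the diameter directions with those of $P$, and disposes of the relative-interior concern exactly as you outline, via the observation that $(x,0)\cdot\overline{a_i}=x\cdot a_i$ for every boundary point $x$ of $P$.
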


\begin{proof}
    We apply Lemma \ref{lemma:lifting} to $P$ and obtain an origin-symmetric polytope $\overline{P}$ with the same number of facets and $\overline{P}\cap e_{d+1}^\perp = P$. We choose a vector $t\in\RR^{d+1}$ such that $\overline{P}\cap (t+\spann\Lambda) = \emptyset$. Then, $\overline{P}$ contains no lattice points of $\overline{\Lambda} = (\Lambda\times\{0\}) \oplus \ZZ t$ other than those in $P\cap \Lambda$. Since $\overline{P}$ is symmetric, its diameter is then attained by the same directions as for $P$ (cf.\ Lemma \ref{lemma:successive_minima}). It remains to check that $\overline{P}$ is complete. To this end, it is enough to note that for any boundary point $x\in\bd(P)$, we have $x\cdot (\pm a_i) = 1$ if and only if $(x,0)^T\cdot (\pm \overline{a_i})=1$. Hence, the segments that realized the diameter of $P$ intersect the facets of $\overline{P}$ in their relative interior and so $\overline{P}$ is complete.
\end{proof}

\subsection{Proofs of Theorems \ref{thm:polytopes} and \ref{thm:dimension}}

We want to use our construction methods in order to prove the main theorems \ref{thm:polytopes} and \ref{thm:dimension}.

\begin{proof}[Proof of Theorem \ref{thm:polytopes}]
        (1) is a direct consequence of Theorem \ref{thm:reduced_meta}, since $k\leq d$. (2) follows from Theorem \ref{thm:complete_meta} in the same way. In order to see that the number $2^{d+1}-2$ is best possible, we recall that the Voronoi cell of $A_d^\star$ is a permutohedron \cite[Ch.\ 4, Sec.\ 6.6]{conwaysloanebook}, so it has $2^{d+1}-2$ facets. Since $V_{A_d^\star}$ is complete with respect to $A_d^\star$ (cf.\ Proposition \ref{prop:voronoi}), the number $2^{d+1}-2$ is optimal with respect to $A_d^\star$. But in view of Lemma \ref{lemma:properties} (2), it is also optimal with respect to any other lattice. For reduced polytopes, the optimality is proven in the same way, since $(V_{A_d^\star})^\star$ is reduced with respect to $A_d$.
\end{proof}

\begin{proof}[Proof of Theorem \ref{thm:dimension}]
    (1) can be derived from Theorem \ref{thm:reduced_meta}, since a $d$-polytope has at least $d+1$ vertices. So it follows that $d+1 \leq 2^{k+1}-2$, where $k$ is the dimension of the linear space spanned by the diameter realizing segments. The claim follows by rearranging this inequality.

    (2) follows in the same fashion from Theorem \ref{thm:complete_meta}, since a $d$-polytope also has at least $d+1$ facets.

    In order to see that the logarithmic dependence on $d$ is optimal, we apply Proposition \ref{prop:lifting} to the permutohedron $P = V_{A_k^\star}$, where $k\geq 2$ is arbitrary, which is complete with respect to $\Lambda=A_k^\star$ (Here we think of $\spann(A_k^\star)$ as $\RR^k$). Since $P$ has $2^k-1>k$ pairs of opposite facets, we can apply Proposition \ref{prop:lifting} multiple times until we obtain an origin-symmetric polytope $Q$ of dimension $2^k-1$ which is complete with respect to some $(2^k-1)$-dimensional lattice $\Lambda\supset A_k^\star$ and whose diameter directions are exactly the ones of $P$. Thus, the diameter directions of $Q\subset\RR^{2^k-1}$ form a $k$-dimensional subspace, which proves the optimality of the logarithmic order in the complete case. For the reduced case it is again enough to use the symmetry of $Q$ in conjunction with Proposition \ref{prop:duality}.
\end{proof}

\subsection{Reductions and Completions}\label{ssec:reductions}

In the Euclidean case, it follows from Zorn's lemma that any convex body $C\subset \RR^d$ has a completion and a reduction. These are defined as complete (reduced) convex bodies that are supersets (subsets) of $C$ with the same Euclidean diameter (width) as $C$. 

In the discrete setting, the terms ``completion'' and ``reduction'' are defined in the same way with the appropriate definitions of diameter and width. We want to argue the existence of completions and reductions in a similar way to the Euclidean setting. However, the argument is more involved since the Euclidean diameter cannot be bounded by the lattice diameter. Therefore, it is not clear that the completion is indeed a bounded set. Likewise, when using Zorn's lemma to find a reduction, we have to make sure that it has non-empty interior to be a convex body in the sense of our definition. To this end we use the following two Lemmas.
\begin{lemma}
\label{lemma:vdc}
Let $K\subset\RR^d$ be an origin-symmetric $d$-dimensional convex set and let $\Lambda\subset\RR^d$ be a $d$-dimensional lattice such that $\inter K \cap \Lambda = \{ 0 \}$. Then, $K$ is bounded.
\end{lemma}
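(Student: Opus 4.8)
The plan is to argue by contraposition: assuming $K$ is unbounded, I will exhibit a nonzero lattice point of $\Lambda$ in $\inter K$, contradicting the hypothesis. First I would reduce to the case that $K$ is closed: since $\inter(\cl K) = \inter K$ for a convex set with nonempty interior, the hypothesis $\inter K\cap\Lambda = \{0\}$ is unaffected by passing to $\cl K$, and $K$ is bounded if and only if $\cl K$ is. As $K$ is origin-symmetric and full-dimensional, we have $0\in\inter K$ (averaging $x$ and $-x$ for $x\in\inter K$). If $K$ is unbounded, its recession cone $R = \{v\colon x+tv\in K \text{ for all } x\in K,\, t\geq 0\}$ is a nontrivial closed convex cone; since $K = -K$ forces $R = -R$, the cone $R$ is in fact a linear subspace $L$ of dimension $k\geq 1$.

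Next I would show that a whole open slab around $L$ lies in $\inter K$. Choosing $\varepsilon>0$ with the Euclidean ball $\varepsilon B\subseteq K$, every $v\in L$ and $b\in\varepsilon B$ satisfy $b+v\in K$ because $v$ is a recession direction and $b\in K$. Hence the open set $S = L + \varepsilon B$ is contained in $K$, and being open it lies in $\inter K$. Consequently $S\cap\Lambda = \{0\}$, i.e.\ the only lattice point within Euclidean distance $\varepsilon$ of $L$ is the origin.

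The heart of the argument is to contradict this last statement by producing a nonzero lattice point close to $L$. Writing $\pi$ for the orthogonal projection onto $L^\perp$, a point $\lambda\in\Lambda$ lies in $S$ precisely when $|\pi(\lambda)| < \varepsilon$. If $\Lambda\cap L\neq\{0\}$, any nonzero lattice point of $L$ already lies in $S$ and we are done. Otherwise $\pi$ is injective on $\Lambda$, so $\pi(\Lambda)$ is a subgroup of $L^\perp$ that is abstractly isomorphic to $\ZZ^d$, hence of rank $d$. Since $\dim L^\perp = d-k < d$ and every discrete subgroup of $\RR^{d-k}$ has rank at most $d-k$, the subgroup $\pi(\Lambda)$ cannot be discrete; therefore $0$ is an accumulation point of $\pi(\Lambda)$, and there is a nonzero $\lambda\in\Lambda$ with $|\pi(\lambda)| < \varepsilon$. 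In either case $S$ contains a nonzero lattice point, the desired contradiction.

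The main obstacle is the final lattice-theoretic step: one must rule out that the slab $S$, although unbounded, avoids all nonzero lattice points. The key is the non-discreteness of the projected lattice $\pi(\Lambda)$, which follows from the rank count $d > \dim L^\perp$ once $\Lambda\cap L$ is trivial; the complementary case $\Lambda\cap L\neq\{0\}$ is immediate. I expect no serious difficulty beyond carefully justifying that a subgroup of $\RR^m$ of rank exceeding $m$ fails to be discrete and hence accumulates at the origin.
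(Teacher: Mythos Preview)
Your argument is correct and takes a genuinely different route from the paper. The paper dispatches the lemma in two lines via Minkowski's first theorem: if $K$ were unbounded it would have infinite volume, so for large $R$ the origin-symmetric convex body $K\cap[-R,R]^d$ has volume exceeding $2^d\det\Lambda$ and hence contains a nonzero interior lattice point. Your proof avoids Minkowski entirely, arguing instead through the recession cone: unboundedness together with symmetry forces the recession cone to be a nontrivial linear subspace $L$, the slab $L+\varepsilon B$ sits inside $\inter K$, and a rank-versus-dimension count (a discrete subgroup of $\RR^{d-k}$ has rank at most $d-k$) forces the projection $\pi(\Lambda)$ to accumulate at the origin, producing the forbidden lattice point. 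The paper's approach is much shorter and leans on a tool already central to the geometry of numbers; yours is more self-contained and isolates exactly the structural obstruction in the lattice, at the cost of more setup. One cosmetic point: with $B$ the closed ball the set $L+\varepsilon B$ is not open, but shrinking $\varepsilon$ slightly (or using the open ball) fixes this immediately.
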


\begin{proof}
Suppose $K$ is unbounded. Then there exists a sufficiently large $R>0$ such that $\vol(K\cap [-R,R]^d) > 2^d\det\Lambda$. It follows from Minkowski's first theorem \cite[Theorem 2.5.1]{geometryofnumbers} that $K\cap [-R,R]^d$ contains a non-trivial interior lattice point of $\Lambda$, a contradiction.
\end{proof}

On the polar side, we have the following statement, where for an arbitrary bounded set $A$, the lattice width $\wdt_\Lambda(A)$ is defined similarly to
\eqref{eq:lattice_width}
as
\[
\wdt_\Lambda(A) =\inf_{y\in\Lambda^\star\setminus\{0\}}\sup_{a,b\in A} y\cdot(a-b).
\]

\begin{lemma}
    \label{lemma:vdc_polar}
    Let $K\subset\RR^d$ be a compact convex set and let $\Lambda\subset\RR^d$ be a $d$-dimensional lattice. If $\wdt_\Lambda(K) > 0$, then $K$ has non-empty interior.
\end{lemma}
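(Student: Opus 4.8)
The plan is to prove the contrapositive: if $K$ has empty interior, then $\wdt_\Lambda(K)=0$, which contradicts $\wdt_\Lambda(K)>0$. Since $K$ is compact and convex with empty interior, its affine hull has dimension at most $d-1$, so the linear subspace $L=\spann(K-K)$ has dimension $m<d$ and satisfies $K-K\subseteq L$. The whole argument then reduces to the behaviour of the orthogonal projection $\pi_L\colon\RR^d\to L$ onto this subspace, which is where the essential point lies.

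First I would carry out the reduction to $\pi_L$. For every $y\in\RR^d$ and every $w\in L$ one has $y\cdot w=(\pi_L y)\cdot w$, because the complementary component of $y$ lies in $L^\perp$ and is orthogonal to $w$. Writing $R=\max_{w\in K-K}|w|$, this gives
\[
h(K-K;y)=\max_{w\in K-K}(\pi_L y)\cdot w\leq R\,|\pi_L y|.
\]
Since $h(K-K;y)\geq 0$ always, it therefore suffices to produce, for every $\varepsilon>0$, a nonzero $y\in\Lambda^\star$ with $|\pi_L y|<\varepsilon$; this forces the infimum defining $\wdt_\Lambda(K)$ to be $0$.

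Next I would split into two cases according to whether $\Lambda^\star\cap L^\perp=\{0\}$. If $\Lambda^\star\cap L^\perp\neq\{0\}$, any nonzero $y$ in this intersection has $\pi_L y=0$, so $h(K-K;y)=0$ and we are done. The remaining (and main) case is $\Lambda^\star\cap L^\perp=\{0\}$, i.e.\ $\pi_L$ is injective on $\Lambda^\star$. Then $\pi_L(\Lambda^\star)$ is a subgroup of $L\cong\RR^m$ of rank $d$, yet $d>m$. Since every \emph{discrete} subgroup of $\RR^m$ has rank at most $m$, the group $\pi_L(\Lambda^\star)$ cannot be discrete, so $0$ is a non-isolated point of it and there are nonzero elements $\pi_L y$ arbitrarily close to $0$. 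In both cases we obtain $\wdt_\Lambda(K)=0$, completing the contrapositive.

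The step I expect to be the main obstacle is precisely this last case, which is the ``irrational'' situation: when $K$ lies in an affine subspace whose direction space $L$ contains no nonzero dual lattice vector in its orthogonal complement, there is no single dual vector witnessing width $0$, and one must instead use that a full-rank lattice cannot project injectively onto a lower-dimensional space as a discrete set (a Kronecker-type density phenomenon). The bookkeeping here — identifying $\pi_L(\Lambda^\star)$ as a rank-$d$ subgroup and invoking that non-discreteness of a subgroup forces accumulation at the origin — is the crux; the projection inequality and the case $\Lambda^\star\cap L^\perp\neq\{0\}$ are routine by comparison.
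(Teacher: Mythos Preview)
Your argument is correct, but it takes a different route from the paper. The paper argues directly: setting $w=\wdt_\Lambda(K)>0$, it considers the origin-symmetric set $M=w(K-K)^\star$, observes that $\inter M\cap\Lambda^\star=\{0\}$ (essentially the definition of $w$) and that $M$ is $d$-dimensional because $K$ is bounded, and then invokes the preceding Lemma (a Minkowski first-theorem argument) to conclude that $M$ is bounded; boundedness of $(K-K)^\star$ forces $K-K$, and hence $K$, to have non-empty interior. Your contrapositive instead stays on the primal side: you project $\Lambda^\star$ orthogonally onto $L=\spann(K-K)$ and use that a rank-$d$ subgroup of an $m$-dimensional vector space with $m<d$ cannot be discrete, producing dual vectors of arbitrarily small width. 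The paper's proof is shorter because it recycles the previous lemma and packages the ``too much lattice in too little space'' phenomenon as a volume obstruction via Minkowski; your version is more self-contained and makes the Kronecker-type density explicit, at the cost of invoking the structure theorem for discrete subgroups of $\RR^m$. Both hinge on the same underlying principle, just dualised.
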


\begin{proof}
    Let $w=\wdt_\Lambda(K)$ and consider the origin-symmetric convex set $M = w(K-K)^\star$. Then $M$ contains no non-trivial interior lattice points of $\Lambda^\star$: Suppose to the contrary that there exists $y\in \inter M \cap \Lambda^\star\setminus\{0\}$. Then we have $y\cdot (a-b) < w$ for all $a,b\in K$. But as $K$ is compact, there exist $a_0,b_0\in K$ such that
    \(
    \wdt(K;y) = y\cdot (a_0-b_0) < w,
    \)
    a contradiction.
    Moreover, since $K$ is bounded, $M$ is $d$-dimensional. Thus, Lemma \ref{lemma:vdc} applies and we obtain that $M$ is bounded. Consequently $K-K$, and thus also $K$, have non-empty interior.
\end{proof}

\begin{theorem}
\label{thm:zorn}
Let $C\subset\RR^d$ be a convex body and let $\Lambda\subset\RR^d$ be a $d$-lattice.
\begin{enumerate}
\item There exists a complete convex body $K\supseteq C$ with $\diam_\Lambda(K) = \diam_\Lambda(C)$.
\item There exists a reduced convex body $L\subseteq C$ with $\wdt_\Lambda(L)=\wdt_\Lambda(C)$.
\end{enumerate}
\end{theorem}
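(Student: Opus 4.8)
The plan is to obtain both bodies as extremal elements of suitable posets and invoke Zorn's lemma, exactly as in the Euclidean setting; the two preparatory Lemmas \ref{lemma:vdc} and \ref{lemma:vdc_polar} are tailored to overcome the two places where the discrete setting misbehaves. For (1) I would consider the poset $\mathcal P$ of all compact convex sets $K'$ with $C\subseteq K'$ and $\diam_\Lambda(K')=D:=\diam_\Lambda(C)$, ordered by inclusion; every such $K'$ automatically has non-empty interior since it contains $C$, so it is a convex body. A maximal element of $\mathcal P$ is complete, for otherwise there would be a convex body $K'\supsetneq K$ of the same diameter, which would again lie in $\mathcal P$ and contradict maximality. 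For (2) I would dually consider the poset $\mathcal Q$ of compact convex sets $L'\subseteq C$ with $\wdt_\Lambda(L')=w:=\wdt_\Lambda(C)$, ordered by reverse inclusion, and argue that a minimal element is reduced.

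The heart of (1) is producing an upper bound for a chain $\{K_\alpha\}$ in $\mathcal P$. I would take $U=\bigcup_\alpha K_\alpha$ and set $K=\cl U$. By Lemma \ref{lemma:successive_minima} the constraint $\diam_\Lambda(K_\alpha)=D$ is equivalent to $\inter(\tfrac1D(K_\alpha-K_\alpha))\cap\Lambda=\{0\}$, i.e.\ $\sukz_1(K_\alpha-K_\alpha;\Lambda)\ge\tfrac1D$. Using that $\{K_\alpha\}$ is a chain one checks that $U-U=\bigcup_\gamma (K_\gamma-K_\gamma)$ is a directed (increasing) union, and that any point of $\inter(\tfrac1D(U-U))$ already lies in the interior of a single $\tfrac1D(K_\gamma-K_\gamma)$ (cover it by a small simplex whose finitely many vertices all sit in one member of the chain). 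Hence $\inter(\tfrac1D(U-U))\cap\Lambda=\{0\}$, and since $\tfrac1D(U-U)$ is origin-symmetric and full-dimensional (it contains $\tfrac1D(C-C)$), Lemma \ref{lemma:vdc} forces it to be \emph{bounded}. Thus $U$, and then $K=\cl U$, is compact; as $K-K=\cl(U-U)$ has the same interior, $\diam_\Lambda(K)\le D$, while $K\supseteq C$ gives $\diam_\Lambda(K)\ge D$. So $K\in\mathcal P$ bounds the chain, and Zorn yields the desired complete body. The main obstacle here is precisely this boundedness: the Euclidean diameter of the $K_\alpha$ need not be controlled by the lattice diameter, and Lemma \ref{lemma:vdc} is exactly the tool that turns the lattice-diameter constraint into compactness.

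For (2) the delicate point is lower bounds for chains. Given a chain $\{L_\alpha\}$ decreasing under inclusion, its intersection $L=\bigcap_\alpha L_\alpha$ is compact and convex and satisfies $L_\alpha\to L$ in the Hausdorff metric. Monotonicity immediately gives $\wdt_\Lambda(L)\le w$. For the reverse inequality I would use that $\wdt_\Lambda=\inf_{y\in\Lambda^\star\setminus\{0\}}\wdt(\,\cdot\,;y)$ is an infimum of Hausdorff-continuous functions, hence \emph{upper semicontinuous}; applied at $L$ this yields $w=\limsup_\alpha \wdt_\Lambda(L_\alpha)\le \wdt_\Lambda(L)$, so $\wdt_\Lambda(L)=w>0$. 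The subtlety worth flagging is that the width in a \emph{fixed} direction may well drop in the limit; it is only the minimum over all directions that is controlled from below, and upper semicontinuity is exactly what captures this. Since $\wdt_\Lambda(L)>0$, Lemma \ref{lemma:vdc_polar} guarantees that $L$ has non-empty interior and is therefore a genuine convex body, i.e.\ $L\in\mathcal Q$ and bounds the chain. Zorn then produces a minimal $L\in\mathcal Q$, which must be reduced: any convex body $L'\subsetneq L$ with $\wdt_\Lambda(L')=w$ would lie in $\mathcal Q$ and contradict minimality.
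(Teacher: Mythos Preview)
Your argument is correct and follows the same Zorn's-lemma strategy as the paper; the differences lie only in how the chain conditions are verified. For (1) you work directly with the difference body, showing via a finite-simplex argument that every interior point of $\tfrac1D(U-U)$ already lies in some $\tfrac1D(K_\gamma-K_\gamma)$, and then invoke Lemma~\ref{lemma:vdc} once; the paper instead first shows $\inter M\subset M'$ and bounds the longest segment of $M$ in each primitive direction separately before applying Lemma~\ref{lemma:vdc}. Your route is slightly slicker. For (2) you package the inequality $\wdt_\Lambda(L)\ge w$ as upper semicontinuity of an infimum of Hausdorff-continuous functionals, where the paper carries out a direct finite-intersection-property argument in each dual direction. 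Two small remarks: the net convergence $L_\alpha\to L$ you invoke is true but deserves one line (for each $\varepsilon>0$ the nested compacta $L_\alpha\setminus\inter(L+\varepsilon B)$ have empty intersection, so one of them is already empty); and your aside that ``the width in a fixed direction may well drop in the limit'' is misleading, since $\wdt(\,\cdot\,;y)$ is continuous and $\wdt(L_\alpha;y)\ge w$ for every $\alpha$ already forces $\wdt(L;y)\ge w$, so upper semicontinuity, while correct, is not where the subtlety lies.
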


\begin{proof}
Without loss of generality, we assume that $0\in \inter C$. For (1), we consider the set \[
\mathcal{P} = \{K\supseteq C\text{ convex body}\colon \diam_\Lambda(K) = \diam_\Lambda(C)\},\]
partially ordered by inclusion. Since a completion of $C$ is a maximal element of $\mathcal{P}$, we aim to use Zorn's lemma in order to find such a maximal element. To this end, we consider a chain $\mathcal{C}\subseteq\mathcal{P}$ and we define $M^\prime = \bigcup \mathcal{C}$ as the union of all convex bodies in $\mathcal{C}$, as well as $M = \cl M^\prime$. If $M\in\mathcal{P}$, then Zorn's lemma applies and the proof is finished. It, thus, remains to show that $M$ is a convex body with $\diam_\Lambda(M) = \diam_\Lambda (C)$.

Clearly, $M$ has non-empty interior, since $C\subseteq M$. Since $\mathcal{C}$ is a chain, $M^\prime$ is convex, which implies that $M$ is convex as well. Moreover, the convexity of $M^\prime$ yields $\inter M \subseteq M^\prime$; Otherwise, there was a point $x\in \inter(M)\setminus M^\prime$, which could be separated from $M^\prime$ by a hyperplane and, since $x\in\inter(M)$, we may assume that the separation is strict. In this case, however, $x\not\in \cl M^\prime = M$, a contradiction. 

Now consider a primitive vector $v\in\Lambda\setminus\{0\}$ and let $I=[a,b]\subset M$ be a segment of maximal length in $M$ in direction $v$. Towards a contradiction, assume that $M$ is unbounded in direction $v$, i.e., $\diam(M;v)=\infty$. Since $0\in\inter C\subseteq\inter M$, it follows that $M\cap \spann\{v\}$ is unbounded. Since $\inter M \subset M^\prime$, we find that $M^\prime \cap\spann\{v\}$ is also unbounded. Hence, there exists a sequence of points $(a_n)_{n\in\NN}\subset M^\prime\cap\spann\{v\}$ with $\lim_{n\to\infty} |a_n|=\infty$. In particular, there exists $n\in\NN$ with $|a_n| > \diam_\Lambda(C)\cdot |v|$. Consider $A\in\mathcal{C}$ with $a_n\in A$.
We have
\[
\diam_\Lambda(A) \geq \Vol_1([0,a_n]) = \frac{|a_n|}{|v|} > \diam_\Lambda(C),
\]
a contradiction to $A\in\mathcal{P}$. Hence, the longest segment $I\subset M$ parallel to a given primitive vector $v\in\Lambda$ is of finite length. Moreover, if $I=[a,b]$, we have, since $0\in\inter M$, that
\[
I_n = \left(1-\frac 1n\right) I \subset \inter M\subset M^\prime.
\] 
It follows from the definition of $M^\prime$ that there exist convex bodies $A,B\in\mathcal{C}$ with $(1-1/n)a\in A$ and $(1-1/n)b\in B$. Since $\mathcal{C}$ is a chain, we can assume that $A\subseteq B$ and, thus, $I_n\subset B$. Hence, $\Vol_1(I_n) \leq \diam_\Lambda(B) = \diam_\Lambda (C)$ and, taking the limit $n\to\infty$, $\Vol_1(I) \leq \diam_\Lambda(C)$. Since $C\subseteq M$, we have $\diam_\Lambda(M)=\diam_\Lambda(C)$. It follows that the convex set 
\[
D = \frac{1}{\diam_\Lambda(C)}(M-M)
\]
contains no non-trivial interior lattice points of $\Lambda$. From Lemma \ref{lemma:vdc} we have that $D$ is bounded. This implies that $M$ is bounded as well.

In summary, we saw that $M\in\mathcal{P}$ is a valid upper bound of $\mathcal{C}$ and thus, by Zorn's lemma, the completion of $C$ exists.

For (2), we consider the set
\begin{equation}
    \label{eq:Q}
    \mathcal{Q} = \{ K\subseteq C\text{ convex body}\colon \wdt_\Lambda(K) = \wdt_\Lambda(C) \},
\end{equation}
partially ordered by inclusion. Again let $\mathcal{C}\subseteq \mathcal{Q}$ be a chain in $\mathcal{Q}$. As in the proof of (1) it suffices to show that the intersection $M = \bigcap \mathcal{C}$ of all convex bodies in $\mathcal{C}$ is an element of $\mathcal{Q}$. 

It is clear, that $M$ is a compact convex set. We have to show that $\inter M \neq \emptyset$ and $\wdt_\Lambda(M) = \wdt_\Lambda(C)$. To this end, we consider a primitive direction $y\in\Lambda^\star$ and, towards a contradiction, we assume that $\wdt(M;y) < \wdt_\Lambda(C)$. For a scalar $\alpha\in\RR$, let $H_\alpha = \{ x\in\RR^d \colon x\cdot y = \alpha\}$ and let $H_\alpha^-$ and $H_\alpha^+$ be the closed half-spaces defined by $H_\alpha$ depending on whether $x\cdot y$ is less than or greater than $\alpha$ respectively. Due to our assumption on $\wdt(M;y)$, there exist $\alpha<\beta$ with $M\subset H_\alpha^+ \cap H_\beta^-$ and $\beta-\alpha < \wdt_\Lambda(C)$.

Let $\alpha^\prime = \alpha-\varepsilon$ and $\beta^\prime = \beta+\varepsilon$, where $\varepsilon>0$ is such that $\beta^\prime - \alpha^\prime <\wdt_\Lambda(C)$. Suppose that each $A\in \mathcal{C}$ intersects $H_{\alpha^\prime}^-$. Then we consider the family of non-empty convex bodies 
\[
\mathcal{C}^\prime = \{A\cap H_{\alpha^\prime}^- \colon A\in\mathcal{C}\}.
\] 
Since $\mathcal{C}$ is a chain, so is $\mathcal{C}^\prime$. In particular, since all $A^\prime \in \mathcal{C^\prime}$ are non-empty, $\mathcal{C}^\prime$ satisfies the finite intersection property, i.e., for any finite collection $\{A_1^\prime,\dots, A_n^\prime\}\subseteq\mathcal{C}^\prime$, we have $\bigcap_{i=1}^n A_i^\prime\neq\emptyset$. Since any $A^\prime\in \mathcal{C}^\prime$ is contained in the compact set $C$, it follows that $M\cap H_{\alpha^\prime}^- = \bigcap \mathcal{C}^\prime\neq\emptyset$, which contradicts $M\subset H_\alpha^+$. Hence, there is a convex body $A\in \mathcal{C}$ with $A\subset H_{\alpha^\prime}^+$ and, likewise, a convex body $B\in\mathcal{C}$ with $B\subset H_{\beta^\prime}^-$. We may assume that $A\subset B$, because $\mathcal{C}$ is a chain. It follows that 
\[
\wdt(A;y) \leq \beta^\prime - \alpha^\prime < \wdt_\Lambda(C),
\]
a contradiction to $A\in\mathcal Q$. Thus, we have $\wdt(M;y)\geq \wdt_\Lambda(C)>0$, for all $y\in\Lambda^\star\setminus\{0\}$. Lemma \ref{lemma:vdc_polar} gives that $M$ has non-empty interior. Also, since $M\subseteq C$, it follows that $\wdt_\Lambda(M)=\wdt_\Lambda(C)$ as desired. So $M\in\mathcal{Q}$ is a minimal element of $\mathcal C$ and Zorn's lemma gives the existence of a minimal element $L\in\mathcal Q$, which is a reduction of $C$.
\end{proof}

\begin{remark}
\label{rem:zorn}
    \begin{enumerate}
        \item If the convex body $C$ in Theorem \ref{thm:zorn} is symmetric, then its reduction and completion may also be chosen to be symmetric. To see this, we start with the reduction of $C$. Replacing the partially ordered set $\mathcal Q$ in \eqref{eq:Q} by
        \[
            \mathcal{Q}_s = \{ K\subseteq C\text{ convex body}\colon \wdt_\Lambda(K) = \wdt_\Lambda(C),~K\text{ is origin-symmetric}\},
        \]
        Zorn's Lemma yields the existence of a minimal element $L\in\mathcal{Q}_s$. Indeed, $L$ is reduced. Otherwise, there would be an exposed point $p\in L$ at which the width is not uniquely attained (cf.\ Proposition \ref{prop:reduced_exposed}). The same is then true for $-p\in L$. Applying the truncation described in the proof of Proposition \ref{prop:reduced_exposed} to $p$ and $-p$ simultaneously yields an origin-symmetric convex body $L^\prime \subsetneq L$ with $\wdt_\Lambda(L^\prime)=\wdt_\Lambda(L)$, a contradiction to the minimality of $L$ in $\mathcal{Q}_s$. The existence of an origin-symmetric completion can now be deduced immediately from Theorem \ref{prop:duality}.
        
        \item The reduction and completion of $C$ are in general not unique. For instance, each $Q_x$, $x\in (0,1)$ in Example \ref{ex:dim2} constitutes a symmetric reduction of $[-1,1]^2$. Moreover, the right triangle $\conv\{\tbinom{1}{1}, \tbinom{1}{-1},\tbinom{-1}{-1}\}$ is a non-symmetric reduction.
        \item In \cite[Lemma 1.2]{coveringminima} it is shown that the product of the successive minima of $C$ and its polar is bounded from above by $c\cdot d$, where $c>0$ is an absolute constant, i.e., we have
        \begin{equation}
        \label{eq:wd_ineq}
        \frac{\wdt_\Lambda(C)}{\diam_\Lambda(C)} = \sukz_1(C-C;\Lambda)\,\sukz_1((C-C)^\star; \Lambda^\star)\leq c\cdot d,
        \end{equation}
        for any convex body $C\subset\RR^d$. Conway and Thompson showed that the linear order of this upper bound is achieved by certain ellipsoids \cite[Theorem 9.5]{bilinear}, i.e., there exists another universal constant $c^\prime$ such that for each $d\in\NN$ there exists an ellipsoid $\mathcal E\subset\RR^d$ such that
        \[
        \frac{\wdt_\Lambda(\mathcal E)}{\diam_\Lambda(\mathcal E)} \geq c^\prime\, d.
        \]
        Considering a reduction (completion) of $\mathcal{E}$ shows that the linear order in the inequality is also achieved by polytopes with at most $2^{d+1}-2$ vertices (facets).
    \end{enumerate}
\end{remark}

For polytopes we can construct a reduction explicitly without the help of Zorn's lemma. Moreover, this reduction does not increase the number of vertices of the polytope.

\begin{proposition}
    \label{prop:reduction}
    Let $P\subset\RR^d$ be a $d$-polytope and let $\Lambda\subset\RR^d$ be a $d$-lattice. Then there exists a reduced $d$-polytope $Q\subseteq P$ such that $\wdt_\Lambda(P)=\wdt_{\Lambda}(Q)$ and $|\ext(Q)|\leq |\ext(P)|$.
\end{proposition}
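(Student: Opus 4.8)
The plan is to produce the reduction by an iterative truncation procedure, cutting away vertices one at a time exactly as in the proof of Proposition \ref{prop:reduced_exposed}, and to argue that this process terminates at a reduced polytope whose vertex count never increases. First I would recall that by Lemma \ref{lemma:successive_minima} the set of width directions of any polytope is finite, and by Proposition \ref{prop:reduced_exposed} a polytope fails to be reduced precisely when some vertex $p$ is not uniquely selected by any width direction. If $P$ is already reduced we are done; otherwise there is such a ``bad'' vertex $p$, and the construction from the proof of Proposition \ref{prop:reduced_exposed} gives a truncation $P_\varepsilon = P \cap H_\varepsilon^-$ (slicing off $p$ with a hyperplane parallel to a supporting hyperplane at $p$) satisfying $\wdt_\Lambda(P_\varepsilon) = \wdt_\Lambda(P)$ for all sufficiently small $\varepsilon > 0$.

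The key observation for the vertex bound is that such a truncation removes the vertex $p$ while creating only new vertices that lie on the cutting hyperplane $H_\varepsilon$, namely intersection points of $H_\varepsilon$ with edges of $P$ emanating from $p$. To avoid increasing the vertex count, I would instead cut with a hyperplane passing through suitable existing vertices: more precisely, I would push the cutting hyperplane $H_\varepsilon$ until it first meets another vertex of $P$, i.e. take the largest $\varepsilon$ for which $\wdt_\Lambda(P_\varepsilon)=\wdt_\Lambda(P)$ still holds, or for which $H_\varepsilon$ sweeps onto a neighbouring vertex. At that threshold the slice $P \cap H_\varepsilon^-$ has lost $p$ (and possibly other vertices swallowed by the moving hyperplane) while gaining at most the vertices of the new facet $P \cap H_\varepsilon$; a careful accounting — or alternatively choosing the cut to pass through a facet of $P$ so that the new face is itself a face of the old polytope — should guarantee $|\ext(P_\varepsilon)| \le |\ext(P)|$. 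The cleanest route is probably to note that each truncation strictly decreases the number of vertices that are \emph{not} uniquely selected by a width direction, or failing a direct monotonicity, to cut with a hyperplane through $d$ appropriately chosen vertices so the operation is a genuine ``vertex deletion.''

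I would then iterate: as long as the current polytope is not reduced, apply the truncation to a bad vertex. Since each step keeps the width equal to $\wdt_\Lambda(P)$, stays inside $P$, and does not increase the number of vertices, the sequence of vertex counts is non-increasing; I must argue the process terminates and that the limit polytope $Q$ is full-dimensional (so still a $d$-polytope) with $\wdt_\Lambda(Q)=\wdt_\Lambda(P)$. Termination is the main obstacle: removing one bad vertex might in principle create a new vertex that is itself not uniquely selected, so a naive ``count of bad vertices'' need not decrease. I expect the resolution to come from choosing the cut carefully — cutting along a hyperplane spanned by width-selected vertices, or invoking Lemma \ref{lemma:continuity} to control how width directions of the slice relate to those of $P$ — so that each truncation makes measurable progress (e.g. strictly decreasing total vertex count, or fixing a previously bad vertex without spoiling a good one). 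Once termination is secured, the final polytope $Q$ satisfies condition \eqref{c:condition} of Proposition \ref{prop:reduced_exposed} at every vertex and is therefore reduced, with $Q \subset P$, $\wdt_\Lambda(Q) = \wdt_\Lambda(P)$, and $|\ext(Q)| \le |\ext(P)|$ as required; full-dimensionality follows since the width, and hence the interior, is preserved throughout.
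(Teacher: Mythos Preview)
Your plan has a genuine structural gap: the hyperplane truncation $P\cap H_\varepsilon^-$ from Proposition~\ref{prop:reduced_exposed} does \emph{not} control the vertex count. If the bad vertex $p$ has $k$ edges incident to it, slicing near $p$ removes one vertex and creates $k$ new ones on $H_\varepsilon$, so the number of vertices goes up by $k-1\ge d-1$ in dimension $d\ge 3$. Your proposed fixes---pushing the hyperplane until it hits another vertex, or choosing a cut through $d$ existing vertices or along a facet---do not repair this: hitting one neighbouring vertex still leaves $k-1$ new intersection vertices from the other edges, and there is no reason a hyperplane through $d$ old vertices should simultaneously separate $p$ and preserve the width. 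You also correctly identify termination as an obstacle but do not resolve it; a truncation may well produce new vertices that are themselves bad, and nothing in your argument gives a strictly decreasing quantity.

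The paper avoids both problems by \emph{not} truncating with a hyperplane at all. Instead it moves the single bad vertex $v$ inward along a segment toward an interior point $x_0$, replacing $P$ by $P_\lambda=\conv\big((\ext(P)\setminus\{v\})\cup\{\lambda v+(1-\lambda)x_0\}\big)$ (Lemma~\ref{lemma:pull}). This has at most as many vertices as $P$ by construction. One then takes the minimal $\lambda_0$ for which $\wdt_\Lambda(P_{\lambda_0})=\wdt_\Lambda(P)$. Either the moved vertex becomes redundant (Case~1: drop it, vertex count strictly decreases), or at the threshold $\lambda_0$ the new vertex $v'$ is forced to be a reduced vertex---because if no width direction were uniquely maximised at $v'$, one could push slightly past $\lambda_0$ without losing width (using Lemma~\ref{lemma:continuity}), contradicting minimality of $\lambda_0$. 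In either case the number of \emph{non-reduced} vertices strictly decreases, so the iteration terminates. This vertex-pulling idea and the minimality argument for termination are the missing ingredients in your proposal.
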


In particular, any simplex has a reduction which is also a simplex. The idea of the construction is to push the vertices at which the width is not uniquely achieved inside the polytope. This is formalized in the following lemma.

\begin{lemma}
    \label{lemma:pull}
    Let $P\subset\RR^d$ be a $d$-polytope and let $\Lambda\subset\RR^d$ be a $d$-lattice. Let $W\subset\Lambda^\star$ be the set of width directions of $P$ and let $v\in\ext(P)$ be a vertex at which $\wdt_\Lambda(P)$ is not uniquely attained, i.e., $\inter(\ncone(v;P))\cap W=\emptyset$. Then, for any $x_0\in\inter P$, there exists a number $\lambda_0\in [0,1)$ such that
    \begin{equation}
        \label{eq:plambda}
        P_\lambda = \conv\big((\ext(P)\setminus\{v\})\cup\{\lambda v + (1-\lambda)x_0\}\big)\subseteq P
    \end{equation}
    satisfies $\wdt_\Lambda(P_\lambda) = \wdt_\Lambda(P)$, for all $\lambda\in [\lambda_0,1]$. 
\end{lemma}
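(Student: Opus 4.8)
The plan is to establish the conclusion in three moves: the easy inclusion $P_\lambda\subseteq P$, a core claim that the width in every direction of $W$ is preserved, and a continuity argument (Lemma \ref{lemma:continuity}) to control the remaining directions. First I would record the inclusion. Since $v_\lambda := \lambda v + (1-\lambda)x_0$ lies on the segment $[v,x_0]\subseteq P$, we have $P_\lambda = \conv\big((\ext(P)\setminus\{v\})\cup\{v_\lambda\}\big)\subseteq P$ for every $\lambda\in[0,1]$, with $P_1=P$; moreover, for $\lambda$ near $1$ the point $v_\lambda$ is close to $v$, so $P_\lambda$ stays a full-dimensional convex body and $\hausdorff(P,P_\lambda)\to 0$ as $\lambda\to 1$. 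The inclusion already gives $\wdt(P_\lambda;y)\le\wdt(P;y)$ for every $y$, and hence $\wdt_\Lambda(P_\lambda)\le\wdt_\Lambda(P)$.

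The heart of the argument is the claim that $\wdt(P_\lambda;y)=\wdt_\Lambda(P)$ for every width direction $y\in W$ and every $\lambda\in[0,1]$. Writing $\wdt(P_\lambda;y)=h(P_\lambda;y)+h(-P_\lambda;y)=\max_{x\in P_\lambda}y\cdot x-\min_{x\in P_\lambda}y\cdot x$, it suffices to show that neither $\max_{x\in P}y\cdot x$ nor $\min_{x\in P}y\cdot x$ changes when we replace $P$ by $P_\lambda$. This is exactly where the hypothesis is used: because $y\in W$ and $\inter(\ncone(v;P))\cap W=\emptyset$, the vertex $v$ is not the unique maximizer of $y\cdot x$ on $P$, so the maximum is attained at some vertex $w\neq v$. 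Recalling that $W$ is symmetric, $W=-W$, since $(P-P)^\star$ is origin-symmetric (cf.\ Lemma \ref{lemma:successive_minima}), the same reasoning applied to $-y\in W$ shows $v$ is not the unique minimizer either, so the minimum is attained at some vertex $w'\neq v$. Both $w$ and $w'$ survive in $P_\lambda$, and since $v_\lambda\in P$ cannot take a value outside $[\min_P y\cdot x,\max_P y\cdot x]$, the two extrema are unchanged. This proves the claim.

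Finally I would combine the claim with continuity. Applying Lemma \ref{lemma:continuity} to $C=P$ yields $\varepsilon>0$ such that any convex body within Hausdorff distance $\varepsilon$ of $P$ has all of its width directions contained in $W$. Choosing $\lambda_0\in[0,1)$ so that $\hausdorff(P,P_\lambda)<\varepsilon$ for all $\lambda\in[\lambda_0,1]$, the minimum defining $\wdt_\Lambda(P_\lambda)$ is attained at some width direction $y^\star$ of $P_\lambda$, which now lies in $W$. By the core claim $\wdt(P_\lambda;y^\star)=\wdt_\Lambda(P)$, and together with $\wdt_\Lambda(P_\lambda)\le\wdt_\Lambda(P)$ this gives $\wdt_\Lambda(P_\lambda)=\wdt_\Lambda(P)$ for all $\lambda\in[\lambda_0,1]$, as desired.

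The main obstacle is precisely this last step. The inclusion $P_\lambda\subseteq P$ only delivers the inequality $\wdt_\Lambda(P_\lambda)\le\wdt_\Lambda(P)$, and a priori shrinking $P$ toward the interior could create a new direction $y\notin W$ whose width has dropped below $\wdt_\Lambda(P)$; ruling this out is exactly what the continuity lemma achieves, and it is the reason one can only guarantee the equality for $\lambda$ close to $1$ (hence the threshold $\lambda_0$) rather than for all $\lambda\in[0,1]$. By contrast, the core claim is an elementary consequence of the normal-cone hypothesis and the symmetry of $W$, and it holds for every $\lambda\in[0,1]$.
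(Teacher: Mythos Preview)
Your proof is correct and follows essentially the same approach as the paper's: both arguments rest on (i) the observation that for every $y\in W$ the vertex $v$ is neither the unique maximizer nor the unique minimizer of $x\mapsto y\cdot x$ on $P$, so $\wdt(P_\lambda;y)=\wdt(P;y)$ for all $\lambda$, and (ii) Lemma~\ref{lemma:continuity} to ensure that near $\lambda=1$ no new width direction outside $W$ appears. The only cosmetic difference is that the paper defines $\lambda_0$ as $\inf\{\lambda:\wdt_\Lambda(P_\lambda)=\wdt_\Lambda(P)\}$ and then shows it is strictly less than $1$, whereas you directly pick $\lambda_0$ via the $\varepsilon$ from the continuity lemma; your version has the mild advantage of making the conclusion ``for all $\lambda\in[\lambda_0,1]$'' completely explicit.
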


\begin{proof}
    We consider $\lambda_0 = \inf\{\lambda \in [0,1] \colon \wdt_\Lambda( P_\lambda) = \wdt_\Lambda(P)\}$. Since $\wdt_\Lambda$ is continuous, the infimum $\lambda_0$ is indeed attained. It remains to show that $\lambda_0 < 1$. To this end, we recall that for $\varepsilon>0$ sufficiently small, the width directions of $P_{1-\varepsilon}$ are given by $W$, the set of width directions of $P$ (cf.\ Lemma \ref{lemma:continuity}).
    Since $\inter(\ncone(v;P))\cap W=\emptyset$, it follows for any $y\in W$ that $\wdt(P;y)$ is attained by two vertices $a,b\in\ext(P)\setminus\{v\}$, which are still present in $P_{1-\varepsilon}$. We obtain that $\wdt(P_{1-\varepsilon}; y)=\wdt(P;y)$ and, thus, $\wdt_\Lambda(P_{1-\varepsilon}) = \wdt_\Lambda(P)$. This shows $\lambda_0<1$ and finishes the proof.
\end{proof}

\begin{proof}[Proof of Proposition \ref{prop:reduction}]
    Let $V=\ext (P)$ be the set of vertices of $P$. We call a vertex $v\in V$ reduced if there exists a width realizing direction $y\in\Lambda^\star\setminus\{0\}$ with $v\cdot y > x\cdot y$, for all $x\in P\setminus\{v\}$. We can thus rephrase Proposition \ref{prop:reduced_exposed} as follows: $P$ is reduced if and only if all of its vertices are reduced. 

    So assume that there exists a vertex $v\in V$ which is not reduced. We distinguish two cases.\\
    \\
    \textbf{Case 1:} $\wdt_\Lambda(\conv(V\setminus\{v\})) = \wdt_\Lambda(P)$.\\
    We set $P^\prime = \conv( V\setminus \{v\})\subset P$  and obtain a polytope inside $P$ that has the same width as $P$ and one non-reduced vertex less than $P$. The polytope $P^\prime$ is $d$-dimensional in view of Lemma \ref{lemma:vdc_polar}.\\
    \\
    \textbf{Case 2:} $\wdt_\Lambda(\conv(V\setminus\{v\})) < \wdt_\Lambda(P)$.\\
     We fix some $x_0\in\relint\conv(V\setminus\{v\})$ and let $x_\lambda = \lambda v + (1-\lambda) x_0\in \inter P$, for $\lambda \in [0,1]$. In view of Lemma \ref{lemma:pull}, there exists a minimal $\lambda_0$ such that $\wdt_\Lambda(\conv(V\setminus\{v\}\cup \{x_{\lambda_0}\}) = \wdt_\Lambda(P)$. We let $v^\prime = x_{\lambda_0}$ and $P^\prime = \conv (V\setminus\{v\}\cup\{v^\prime\})\subset P$. By construction, we have $\wdt_\Lambda(P^\prime) = \wdt_\Lambda(P)$. We claim that $v^\prime$ is a reduced vertex of $P^\prime$.

     Clearly, $v^\prime$ is a vertex of $P^\prime$ to begin with. Otherwise, it would not be necessary for the convex hull description of $P^\prime$ and we find ourselves in Case 1. For the same reason (and by the choice of $x_0$), we have $\lambda_0 > 0$. Let $W\subset\Lambda^\star$ be the set of width directions of $P^\prime$, i.e., $W = (\Lambda^\star\setminus\{0\})\cap \wdt_\Lambda(P)(P^\prime-P^\prime)^\star$. If $v^\prime$ is not a reduced vertex of $P^\prime$, then for each $y\in W$, there exist $u,w\in V\setminus\{v\}$ with $\wdt_\Lambda(P) = (u-w)\cdot y$. If we replace $v^\prime = x_\lambda$ by $x_{\lambda-\varepsilon}$, for $\varepsilon>0$ sufficiently small, we obtain a polytope $P^{\prime\prime}$ whose width directions are still among the vectors $W$ (cf.\ Lemma \ref{lemma:continuity}). But this implies $\wdt_\Lambda(P^{\prime\prime}) = \wdt_\Lambda(P)$, contradicting the minimality of $\lambda$. Hence, $v^\prime$ is indeed a reduced vertex of $P^\prime\subset P$. It is also clear by construction that $P^\prime$ is a $d$-polytope.\\
     \\
     In both cases, we constructed a $d$-polytope $P^\prime\subset P$ such that $\wdt_\Lambda(P^\prime) = \wdt_\Lambda(P)$ and $P^\prime$ has one non-reduced vertex less than $P$. Also, the total number of vertices of $P^\prime$ is not higher than the number of vertices in $P$. Thus, after finitely many steps, we obtain a reduced polytope with the desired properties.
\end{proof}

    Similar to Remark \ref{rem:zorn} i), if $P$ is origin-symmetric, a slight adaption of the proof of Proposition \ref{prop:reduction} yields an origin-symmetric reduction of $P$; we simply have to treat the opposite vertices of $P$ simultaneously. Using the polarity in the symmetric case, we can find a completion of $P$, which does not rely on Zorn's Lemma and does not possess more facets than $P$. It is obtained by dualizing the construction of the reduction of $P$, i.e., pairs of facets of $P$ are pushed outward until they either vanish or contain a lattice point in their interiors.

    However, in the non-symmetric case this process will not yield a completion. In fact, if $P$ is a simplex, it produces merely homothetic copies of $P$ that have a larger width. Therefore, the following problem remains open.
    
\begin{question}
    Is there an explicit way of constructing a completion of a (non-symmetric) polytope $P\subset\RR^d$ with respect to a given lattice $\Lambda\subset\RR^d$?
\end{question}

\section{Simplices and Triangles}
\label{sec:simplex}

\subsection{Simplices}
We begin with the proof of Theorem \ref{thm:local}.

\begin{proof}[Proof of Theorem \ref{thm:local}]
    Towards a contradiction, we assume that $S=\conv\{v_0,\dots,v_d\}\subset\RR^d$ is a local maximum of $\wdt_\Lambda$ on the class of hollow $d$-simplices, which is not reduced. Our goal is to find for any $\varepsilon>0$ a hollow simplex $S^\prime\subset\RR^d$ with $\hausdorff(S,S^\prime) <\varepsilon$ and $\wdt_\Lambda(S)<\wdt_\Lambda(S^\prime)$.

    If $S$ is not reduced, then there exists a vertex $v_0$, say, of $S$ such that $\inter(\ncone(v_0;S))\cap W=\emptyset$, where $W$ denotes the set of width directions of $S$. Using Lemma \ref{lemma:pull}, we can replace this vertex by a new vertex $v_0^\prime\in \inter S$ and obtain a simplex $T = \conv\{v_0^\prime,v_1,\dots,v_d\}\subset S$ with $\hausdorff(S,T) < \tfrac{\varepsilon}{2}$ and $\wdt_\Lambda(S)=\wdt_\Lambda(T)$. We consider an inequality description
    \begin{equation}
        \label{eq:ineqt}
        T = \{ x\in\RR^d \colon a_i\cdot x\leq b_i,~0\leq i\leq d\},
    \end{equation}
    for certain $a_i\in\RR^d\setminus\{0\}$ and $b_i\in\RR$. We assume that the new vertex $v_0^\prime$ is contained in the facets $F_i=\{x\in T\colon a_i\cdot x = b_i\}$, for $1\leq i\leq d$. Since $v_0^\prime\in\inter S$, we have $\relint F_d\subset \inter S$. Since $S$ is hollow, it follows that $T$ is hollow as well and $\relint (F_d) \cap \Lambda =\emptyset$. Therefore, there exists a number $\delta>0$ such that 
    \[
        S^\prime = \{x\in\RR^d\colon a_i\cdot x\leq b_i,~0\leq i < d,~a_d\cdot x\leq b_d+\delta\}
    \]
    is hollow and $\hausdorff(T,S^\prime)<\tfrac{\varepsilon}{2}$. So we have $\hausdorff(S,S^\prime)<\varepsilon$. Moreover, since $T$ is a simplex, $S^\prime$ is a translation of $\mu T$, for some scale factor $\mu>1$. This implies $\wdt_\Lambda(S^\prime) >\wdt_\Lambda(T)=\wdt_\Lambda(S)$. 
 \end{proof}

\begin{remark}
    \label{rem:locals}
    In the plane, Hurkens found a triangle $T_2\subset\RR^2$ that realizes $\flt(2)$ \cite{hurkens}. In higher dimensions, the realizers of $\flt(d)$ are unknown, but local maximizers of $\wdt_\Lambda$ within the class of hollow simplices have been found. In \cite{codenottisantos} a tetrahedron $T_3\subset\RR^3$ that locally maximizes the width has been obtained, and in \cite{weltgeetal} the authors construct a 4-dimensional local maximum $T_4\subset\RR^4$, as well as a 5-dimensional local maximum $T_5\subset\RR^5$. By Theorem \ref{thm:local}, these simplices are reduced. Moreover, a \texttt{sagemath} \cite{sagemath} computation shows that $T_3$ and $T_4$ are complete, while $T_5$ is not complete. The script that we used to check the reducedness and completeness of polytopes can be accessed under the following URL:
        \url{https://github.com/AnsgarFreyer/lattice_reduced_code}
\end{remark}

It remains open whether Theorem \ref{thm:local} holds true for arbitrary convex bodies. 

 \begin{question}
 \label{quest:reduced_realizers}
     Is any realizer of $\flt(d)$ reduced?
 \end{question}
 

We finish this subsection with an interesting observation concerning complete simplices.

\begin{proposition}
    \label{prop:complete_simplex}
    Let $S\subset\RR^d$ be a complete $d$-simplex with respect to some $d$-lattice $\Lambda$. Then, the diameter directions of $S$ span $\RR^d$.
\end{proposition}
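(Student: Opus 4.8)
The plan is to combine the facet characterization of completeness from Proposition \ref{prop:endpoints} with the geometry of simplices to pin down both endpoints of the diameter segments, and then to reduce the spanning claim to an irreducibility statement about an associated row-stochastic matrix.

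First I would write $S=\conv\{v_0,\dots,v_d\}$ and let $F_i$ denote the facet opposite the vertex $v_i$. By Proposition \ref{prop:endpoints}, completeness provides for each $i$ a diameter segment $I_i=[x_i,y_i]$ with $x_i\in\relint F_i$. The key geometric step is to identify the second endpoint $y_i$. Applying Lemma \ref{lemma:2} to the single segment $I_i$, its two endpoints lie in a pair of opposite exposed faces $F,G$; since $x_i$ lies in the relative interior of the facet $F_i$, any exposed face containing $x_i$ must contain $F_i$, and being a proper face it must equal $F_i$. Hence $G$ is the exposed face cut out by the supporting hyperplane parallel to $\aff F_i$ on the opposite side. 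For a simplex this opposite face is exactly the single vertex $v_i$, because $v_i$ is the unique minimizer over $S$ of the outer normal of $F_i$ (all other vertices lie on $F_i$). Consequently $y_i=v_i$, so the diameter direction attached to $F_i$ is a positive multiple of $w_i:=x_i-v_i$.

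Next I would pass to barycentric coordinates with respect to the affine basis $v_0,\dots,v_d$. Writing $x_i=\sum_{j\neq i}\lambda_{ij}v_j$ with $\lambda_{ij}>0$ and $\sum_{j\neq i}\lambda_{ij}=1$ (strict positivity because $x_i\in\relint F_i$), the direction $w_i$ has coordinate vector $\mu_i\in\RR^{d+1}$ with $\mu_{ii}=-1$, $\mu_{ij}=\lambda_{ij}$ for $j\neq i$, and vanishing coordinate sum. Since $v_0,\dots,v_d$ affinely span $\RR^d$, the vectors $w_i$ span $\RR^d$ if and only if the $\mu_i$ span the hyperplane $H=\{s\in\RR^{d+1}:\sum_i s_i=0\}$. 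Collecting the $\mu_i$ as the rows of the $(d+1)\times(d+1)$ matrix $M=P-I$, where $P=(\lambda_{ij})$ with $\lambda_{ii}=0$ is row-stochastic with strictly positive off-diagonal entries, this amounts to showing that $M$ has rank $d$, equivalently $\dim\ker(I-P)=1$.

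Finally I would establish $\dim\ker(I-P)=1$ by a maximum-principle argument: if $Pz=z$ and $z_k$ is a maximal coordinate of $z$, then $z_k=\sum_{j\neq k}\lambda_{kj}z_j\le z_k$, with equality forcing $z_j=z_k$ for every $j$ because every $\lambda_{kj}>0$; thus $z$ is a multiple of the all-ones vector, which already lies in $\ker(I-P)$ by row-stochasticity. Hence the row space of $M$ equals $H$, the $w_i$ span $\RR^d$, and since each $w_i$ is parallel to a genuine diameter direction, the diameter directions of $S$ span $\RR^d$. I expect the main obstacle to be the identification $y_i=v_i$: once one knows that the diameter segments run from each vertex into the relative interior of the opposite facet, the remaining linear algebra (strict positivity, i.e.\ irreducibility of $P$) is routine.
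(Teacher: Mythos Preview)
Your argument is correct. The identification $y_i=v_i$ via Proposition~\ref{prop:endpoints} and Lemma~\ref{lemma:2} is exactly how the paper proceeds, so the first half of your proof coincides with theirs.

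Where you diverge is in the spanning step. You pass to barycentric coordinates and reduce to showing that the row-stochastic matrix $P$ with strictly positive off-diagonal entries has $\dim\ker(I-P)=1$, which you dispatch by a maximum-principle argument. The paper instead argues geometrically by contradiction: if all the directions $x_{F}-v_{F}$ lay in a hyperplane $H$, translate $H$ to support $S$; the translate meets $S$ in some vertex $v$, and then the segment $I_F$ for the facet $F$ opposite $v$ cannot be parallel to $H$ (since $v\in H+t$ but $\relint F$ is disjoint from $H+t$), a contradiction. The paper's route is a couple of lines and avoids any computation; your route is longer but entirely self-contained and makes the role of the strict positivity of the $\lambda_{ij}$ (irreducibility of $P$) explicit, which is a nice structural observation. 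Both arguments hinge on the same fact---that $x_i$ lies in the \emph{relative interior} of $F_i$---so neither is more general than the other.
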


\begin{proof}
    According to Proposition \ref{prop:endpoints}, for every facet $F\subset S$, there exists a diameter segment $I_F\subset S$ with an endpoint $x_F\in \relint F$. By Lemma \ref{lemma:2}, the other endpoint of $I_F$ lies inside the face opposite of $F$. Since $S$ is a simplex, this means that the other endpoint is the unique vertex $v_F\in S$ opposite to $F$.

    Towards a contradiction, assume that the directions $x_F-v_F$, where $F$ ranges over all facets of $S$, are contained in a $(d-1)$-dimensional space $H\subset\RR^d$. Let $t\in\RR^d$ such that $H+t$ supports $S$. Then, $S\cap (H+t)$ contains a vertex $v$ of $S$. Let $F$ be the facet opposite to $v$. Then, $\relint F$ is disjoint from $H+t$; otherwise $F\subset H+t$ and the entire simplex would be lower-dimensional. But now the segment $I_F$ is not parallel to $H$, a contradiction.
\end{proof}

\begin{remark}
    \label{rem:realizers}
     As we saw in Example \ref{ex:join}, there exist reduced simplices $S\subset\RR^d$ whose width directions $W$ do not span $\RR^d$. Consider any point of $x_0\in\inter S$. Since the polar $(S-x_0)^\star$ is again a simplex, the above proposition shows that $(S-x_0)^\star$ cannot be complete with the same diameter directions $W$, as it is the case for origin-symmetric convex bodies when polarized with respect to the origin (cf. Theorem \ref{prop:duality}).
\end{remark}

It would be interesting to find more classes of complete (or reduced) convex bodies for which the diameter (width) directions span the entire ambient space. One such class apart from simplices are the Voronoi cells from Proposition \ref{prop:voronoi}, since any Voronoi relevant vector of $\Lambda$ is a diameter direction. By duality the width directions of $(V_\Lambda)^\star$ are full-dimensional. 

\subsection{Classification of Triangles in the Plane} \label{classification_triangles}
In this subsection we give a complete classification of reduced triangles and triangles that are simultaneously reduced and complete. Finally, we show that every complete triangle is among the reduced and complete triangles, proving Theorem \ref{thm:triangles}. 

For convenience, in this subsection we only consider the lattice $\Lambda=\ZZ^2$ and we write $\diam(C)$ for $\diam_{\ZZ^2}(C)$, as well as $\wdt(C)$ for $\wdt_{\ZZ^2}(C)$. 

\begin{definition}
\label{def:equiv}
Two polygons $P,Q\subset \RR^2$ are called \emph{equivalent}, if there exist $\lambda>0$, $t\in\RR^2$ and $U\in\GL_2(\ZZ)$ such that $P=t+\lambda UQ$.
\end{definition}

If $P\subset\RR^2$ is a reduced polygon, then the primitive vectors in $\ZZ^2$ that realize $\wdt(P)$ must be 2-dimensional. Moreover, they contain a basis of $\ZZ^2$ (cf.\ \cite[Ch.1, Theorems 3.4 and 3.7]{geometryofnumbers}). Thus, we have the following lemma.

\begin{lemma}
    \label{lemma:square}
    Let $P\subset\RR^2$ be reduced w.r.t.\ $\ZZ^2$. Then $P$ is equivalent to a polygon $Q\subset[-1,1]^2$, such that each edge of $[-1,1]^2$ contains a vertex of $Q$ and $\wdt(Q)=2$.
\end{lemma}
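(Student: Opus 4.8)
The plan is to exploit the two facts recalled just before the statement---that for a reduced planar polygon the width directions span $\RR^2$ and contain a basis of $\ZZ^2$---and then to bring $P$ into the desired normal form inside $[-1,1]^2$ using a unimodular transformation, a dilation and a translation, each of which preserves reducedness by Lemma \ref{lemma:properties}.

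First I would pin down the width directions. Applying Theorem \ref{thm:reduced_meta} with $d=2$: a polygon has at least $3$ vertices, so $3\le 2^{k+1}-2$ forces $k=2$, i.e.\ the width directions of $P$ span $\RR^2$. Combined with the cited two-dimensional geometry-of-numbers result, this yields two width directions $y_1,y_2\in\ZZ^2$ that form a basis of $\ZZ^2$. I would then transport this basis to the standard one: setting $U=[\,y_1\mid y_2\,]^T\in\GL_2(\ZZ)$ and using the transformation rule $\wdt(UP;y)=\wdt(P;U^Ty)$, one checks $U^Te_i=y_i$, so $\wdt(UP;e_i)=\wdt(P;y_i)=\wdt(P)$. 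Since $y\mapsto U^Ty$ permutes $\ZZ^2\setminus\{0\}$, we also have $\wdt(UP)=\wdt(P)$, so $e_1,e_2$ are genuine width directions of $UP$; by Lemma \ref{lemma:properties}(3), $UP$ is still reduced. Replacing $P$ by $UP$, I may assume $e_1$ and $e_2$ realize $\wdt(P)$.

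Next I would normalize size and position. A dilation by $2/\wdt(P)$ (Lemma \ref{lemma:properties}(1)) arranges $\wdt(P)=2$, whence $\wdt(P;e_1)=\wdt(P;e_2)=2$. These directional widths are exactly the extents of $P$ along the two coordinate axes, both equal to $2$, so an independent translation in each coordinate places $P$ inside $[-1,1]^2$ with $P$ still meeting all four edges of the square. Finally, for the vertex condition I would argue edge by edge: the set $P\cap\{x_1=1\}$ is a nonempty proper face of the polygon $P$, hence either a vertex or an edge of $P$; in both cases it contains a vertex of $P$, and since $P\subset[-1,1]^2$ this vertex lies on the square's edge $\{1\}\times[-1,1]$. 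The same reasoning applies to the edges $\{-1\}\times[-1,1]$, $[-1,1]\times\{1\}$ and $[-1,1]\times\{-1\}$ via $\wdt(P;e_1)=\wdt(P;e_2)=2$. Setting $Q=P$ then gives the claim.

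I expect the only genuinely delicate point to be the bookkeeping for how width directions behave under the unimodular map---in particular verifying that $e_1,e_2$ \emph{realize} the minimal width of $UP$ rather than merely being attained in some direction---while the passage from ``$P$ touches an edge of the square'' to ``$P$ has a \emph{vertex} on that edge'' requires the small but necessary observation that an exposed face of a $2$-polytope is a vertex or an edge. Everything else is routine normalization.
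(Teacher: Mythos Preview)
Your proof is correct and follows exactly the approach the paper indicates: the paper's argument is just the sentence preceding the lemma (width directions are $2$-dimensional and contain a lattice basis), and you have filled in the routine normalization details that the paper leaves implicit. The bookkeeping for the unimodular transformation and the passage from ``touches an edge'' to ``has a vertex on that edge'' are handled correctly.
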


First, consider a reduced triangle $T\subset[-1,1]^2$ with $\wdt(T)=2$ such that every edge of $[-1,1]^2$ contains a vertex of $T$. Clearly, one of the vertices of $T$ is then a vertex of $[-1,1]^2$ as well. Without loss of generality, let $(-1,-1)^T$ be a vertex of $T$. The other two vertices are then given by $(x,1)^T$ and $(1,y)^T$ for certain $x,y\in [-1,1]$. 

Since $T$ is reduced, we must have $x+y\leq 0$; otherwise the vector $(-1,1)^T$ would achieve a smaller width. After reflecting at the diagonal if necessary, we can also assume that $x\geq y$. Indeed, these conditions are also sufficient for $T$ to be reduced:

\begin{proposition}
\label{prop:reduced_triangles}
A triangle $T\subset\RR^2$ is reduced with respect to $\ZZ^2$ if and only if it is equivalent to a triangle of the form
\begin{equation}
    \label{eq:txy}
    T_{xy} = \conv\left\{ \binom{-1}{-1}, \binom{x}{1}, \binom{1}{y}\right\},
\end{equation}
where $x+y\leq 0$ and $-1\leq y\leq x\leq 1$.
\end{proposition}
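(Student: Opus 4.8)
The \emph{only if} direction is essentially contained in the discussion preceding the statement: by Lemma \ref{lemma:square} a reduced triangle is equivalent to one inscribed in $[-1,1]^2$ with $\wdt=2$ and a vertex on each edge of the square, hence, after placing a corner at $(-1,-1)^T$, to some $T_{xy}$ with $-1\le x,y\le 1$; and $x+y\le 0$ is forced because otherwise the direction $(-1,1)^T$ witnesses $\wdt(T_{xy})=2-x-y<2$. So the substance of the proposition is the \emph{if} direction, which I would split into two tasks: (i) show $\wdt(T_{xy})=2$, and (ii) verify the criterion of Proposition \ref{prop:reduced_exposed}.

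For (i), abbreviate $p_1=(-1,-1)^T$, $p_2=(x,1)^T$, $p_3=(1,y)^T$. For $v=(a,b)\in\ZZ^2\setminus\{0\}$ one has $\wdt(T_{xy};v)=\max_i v\cdot p_i-\min_i v\cdot p_i$, which equals the largest of the three quantities $|a(x+1)+2b|$, $|2a+b(y+1)|$, $|a(1-x)+b(y-1)|$. The directions $\pm e_1,\pm e_2$ give width exactly $2$ (e.g. for $e_1$ the three quantities are $x+1$, $2$, $1-x$, whose maximum is $2$ since $|x|\le 1$), so it remains to prove $\wdt(T_{xy};v)\ge 2$ for every nonzero integer $v$. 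I would argue by the signs of $a,b$. If $a$ or $b$ vanishes, the middle (resp.\ first) quantity already equals $2|a|$ (resp.\ $2|b|$). If $a,b$ have the same sign, then $|a(x+1)+2b|\ge 2|b|\ge 2$ because $x+1\ge 0$. If $a,b$ have opposite signs, then (after negating $v$, using that width is even) the third quantity equals $|a|(1-x)+|b|(1-y)\ge (1-x)+(1-y)=2-(x+y)\ge 2$; this is the one place where the hypothesis $x+y\le 0$ enters, and it is the crux of sufficiency. Hence $\wdt(T_{xy})=2$, attained in particular by $\pm e_1,\pm e_2$.

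For (ii), by Proposition \ref{prop:reduced_exposed} it suffices to exhibit for each vertex a width direction selecting it uniquely. In the generic case $-1<x<1$, $-1<y<1$ the directions $\pm e_1,\pm e_2$ suffice: $e_1$ isolates $p_3$, $e_2$ isolates $p_2$, and $-e_1$ (or $-e_2$) isolates $p_1$. The plan is then to treat the degenerate configurations, which occur precisely when a coordinate reaches $\pm 1$ and, via $x+y\le 0$, forces one of the corner positions $(x,y)=(1,-1)$, $(-1,1)$, $(-1,-1)$; in each of these one of $\pm e_1,\pm e_2$ only selects an edge rather than the needed vertex. I would check that in these three cases the extra direction $(1,-1)^T$, $(-1,1)^T$, or $(-1,-1)^T$ respectively also has width $2$ (using $x+y=0$ in the first two, and $\max(x+1,1+y)+2=2$ in the third) and is uniquely maximized at the missing vertex $p_3$, $p_2$, or $p_1$.

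The main obstacle is purely the bookkeeping of these boundary cases in (ii): one must confirm that whenever $\pm e_1,\pm e_2$ fail to isolate a vertex the replacement direction genuinely realizes width $2$ and picks out exactly that vertex. This, together with the opposite-sign estimate of (i), constitutes the only nontrivial content, the remaining verifications being short sign computations.
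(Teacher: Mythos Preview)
Your proposal is correct and follows essentially the same route as the paper: the sign-case analysis in your step~(i) showing $\wdt(T_{xy};v)\ge 2$ is identical to the paper's two cases, and both conclude reducedness via Proposition~\ref{prop:reduced_exposed}. You are in fact more careful than the paper in step~(ii): the paper simply asserts that $\wdt(T_{xy})=2$ together with the width directions $\pm e_1,\pm e_2$ yields reducedness by Proposition~\ref{prop:reduced_exposed}, without explicitly checking the three corner configurations $(x,y)\in\{(1,-1),(-1,1),(-1,-1)\}$ where $\pm e_1,\pm e_2$ fail to isolate one vertex and an auxiliary diagonal direction is needed. (A minor wording issue: not every boundary case ``a coordinate reaches $\pm 1$'' is degenerate in your sense---e.g.\ $x=-1$, $-1<y<1$ is still handled by $\pm e_1,\pm e_2$---but the three corners you actually list and verify are precisely the problematic ones, so the argument is sound.)
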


This result was proven independently by Cools and Lemmens in \cite{coolslemmens} for lattice triangles.

\begin{proof}
    It only remains to prove the sufficiency, i.e., we aim to prove that $T_{xy}$ is reduced for any $x,y\in [-1,1]$ with $x+y\leq 0$. Since $\wdt(T_{xy};e_i)=2$, $i=1,2$, we have $\wdt(T_{xy})\leq 2$ and if equality holds, then $T_{xy}$ is reduced by Proposition \ref{prop:reduced_exposed}.

    So we consider a vector $v=(a,b)^T\in\ZZ^2\setminus\{0\}$ with $a\neq 0$ and $b\neq 0$ and we distinguish two cases (note that the width does not depend on the orientation of $v$).\\
    \\
    \textbf{Case 1:} $a,b>0$.\\
    We consider the vertices $(x,1)^T$ and $(-1,-1)^T$. We have
    \[
    \wdt(T_{xy};v)\geq \left| \binom{a}{b}\cdot\left( \binom{x}{1} - \binom{-1}{-1} \right) \right| = a(x+1)+2b \geq 2b \geq 2,
    \]
    where we used that $x+1\geq 0$ and that $b$ is an integer.\\
    \\
    \textbf{Case 2:} $a>0>b$.\\
    We consider the vertices $(x,1)^T$ and $(1,y)^T$. We have
    \[\begin{split}
    \wdt(T_{xy};v)  \geq \left| \binom{a}{b} \cdot \left( \binom{1}{y} - \binom{x}{1}\right)\right|& = a(1-x) + b(y-1) \geq 1-x - (y-1)\\
    &=2-(x+y)\geq 2
    \end{split}\]
    where we used that $1-x\geq 0$, $y-1\leq 0$ and $a,b\in \ZZ$ in the first line, as well as $x+y\leq 0$ in the second line. 
\end{proof}

\begin{remark}
\label{rem:hurkens}

    Proposition \ref{prop:reduced_triangles} can be used to reprove Hurkens' result from \cite{hurkens} stating that $\flt(2)=1+\tfrac{2}{\sqrt 3}$. We sketch the argument here: It is enough to study the width of inclusion-maximal hollow convex bodies. Recall that $C$ is ``inclusion-maximal hollow'' if it contains no interior lattice points, but any convex body $C^\prime\supsetneq C$ contains an interior lattice point. Such bodies in the plane are polygons which contain a lattice point in the relative interior of each edge and thus, by a parity argument, they are either triangles or quadrangles (see \cite{lovasz_2d} or \cite{Averkov2013} for details). A short and elementary calculation shows that the maximum lattice width of an inclusion-maximal hollow quadrangle is 2 \cite{hurkens}. So the crux of Hurkens' proof is to show that $\flt_s(2)=1+\tfrac {2} {\sqrt{3}}$, where again $\flt_s(2)$ is the maximum lattice width of a hollow triangle.
    
    In view of Theorem \ref{thm:local}, we know that $\flt_s(2)$ is attained by a triangle $T$ which is both inclusion-maximal hollow \emph{and} reduced. Hence, $T$ is equivalent to a triangle of the form $T_{xy}$ of equation \eqref{eq:txy}, where $x+y\leq 0$ and $-1\leq y\leq x \leq 1$. Of course, our notion of equivalence does not preserve hollowness, since translations and scalings do not. Therefore, we are interested in the minimal $\mu > 0$ for which there exists a vector $v\in\RR^2$ and real numbers $-1\leq x\leq y\leq 1$ such that $x+y\leq 0$ and $T_{xy}$ is hollow with respect to the affine square lattice $\mathcal A = \mu\ZZ^2+v$. Since the lattice width of $T_{xy}$ is always 2, we then have $\flt_s(2) = 2/\mu$.

    Let $(x,y)$ be in the domain $D=\{(x,y)\colon -1\leq x\leq y\leq 1,~x+y\leq 0\}$, and suppose that $T_{xy}$ is inclusion-maximal hollow with respect to an affine square lattice $\mathcal A = \alpha\ZZ^2 + v$. Then, every edge of $T_{xy}$ contains a point of $\mathcal A$ in its relative interior. The coordinates of these points are either equal or they differ by at least $\alpha$. From this, one can derive that the points form an isosceles right triangle $R$ with edge direction $e_1$, $e_2$ and $\eins$ and the lengths of the catheters is $\alpha$. Thus, there are only two choices for $\alpha$ so that $T_{xy}$ can be inclusion-maximal hollow with respect to $\mathcal A$, depending on whether the right angle of $R$ is on the edge $E_1=[(-1,-1)^T,(x,1)^T]$ or $E_2=[(-1,-1)^T, (1,y)^T]$. Since $x\geq y$, choosing $R$ so that the right angle is on $E_1$ will yield the smaller $\alpha$. We denote this number by $\alpha(x,y)$ (cf.\ Figure \ref{fig:hurkens}). By computing the coordinates of the unique point $p\in E_1$  whose vertical distance to $E_2$ is the same as its horizontal distance to $[(1,y)^T, (x,1)^T]$, one can express $\alpha(x,y)$ explicitely as a function in $x$ and $y$ with domain $D$. The minimum of this function can be computed in exact arithmetic, which then confirms $\flt_s(2)=2/\min_{x,y}\alpha = 1 + \tfrac{2}{\sqrt{3}}$. The computations were carried out using \texttt{sagemath} \cite{sagemath} and the code can be found in \url{https://github.com/AnsgarFreyer/lattice_reduced_code}.
    \hfill $\diamond$
\end{remark}

\begin{figure}[!htb]
        \centering
        \includegraphics[width= .4\textwidth]{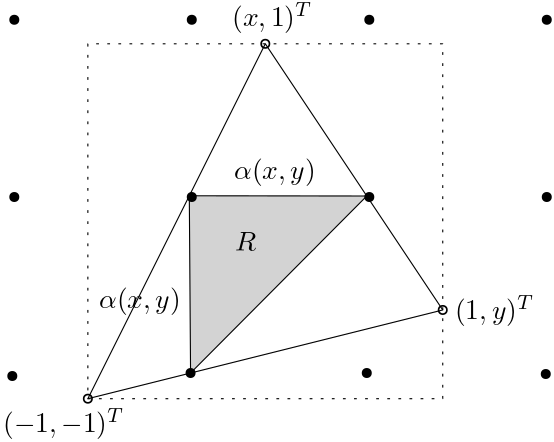}
        \caption{The situation in Remark \ref{rem:hurkens}.}
        \label{fig:hurkens}
\end{figure}

Now we turn our attention to the case when the triangle $T_{xy}$ as in equation \eqref{eq:txy} is also complete. From Proposition \ref{prop:complete_simplex}, we see that each vertex of $T_{xy}$ needs to be the endpoint of a diameter segment. The diameter directions of $T_{xy}$ are the lattice points in $\sukz_1(C;\ZZ^2)C$, where $C=T_{xy}-T_{xy}\subset[-2,2]^2$. Since the origin is an interior point of $T_{xy}$ (unless $x=y=-1$ in which case $T_{xy}$ is not complete), the diameter of $T_{xy}$ is greater than 1. Thus, it is realized by lattice points in $\inter C \cap [-2,2]^2 \cap\ZZ^2\subseteq [-1,1]^2$.

So the only possible direction of a segment $I_1$ that connects $(-1,-1)^T$ with its opposite edge and that realizes the diameter of $T_{xy}$ is $(1,1)^T$. A routine computation shows that the lattice length of $I_1$ is 
\[
\Vol(I_1)= \frac{3-xy-(x+y)}{2-(x+y)}.
\]
At the vertex $(x,1)^T$, the diameter could be obtained in the directions $(0,-1)^T$ and $(1,-1)^T$. But $(0,-1)^T$ clearly gives the longer segment $I_2$ and is therefore diameter realizing. The length of $I_2$ is given by
\[
\Vol_1(I_2)= \frac{3-y}{2}-\frac{1+y}{2}x.
\]
Note that, by exchanging the roles of $x$ and $y$ the third segment $I_3$ in direction $(-1,0)^T$ that connects the vertex $(1,y)^T$ to its opposite edge is of length 
\[
\Vol_1(I_3) = \frac{3-x}{2}-\frac{1+x}{2}y = \frac{3-y}{2}-\frac{1+y}{2}x =\Vol_1(I_2).
\]
Combining these equations we obtain, that $T_{xy}$ is complete, only if the following equation is fulfilled:
\[
 \frac{3-xy-(x+y)}{2-(x+y)} = \frac{3-y}{2}-\frac{1+y}{2}x
\]
After multiplying with $2-(x+y)$, we obtain an equation which is quadratic in $x$. Its solutions are $x=\frac{3-y}{y+1}$ and $x=-y$. The first solution is not in the interval $[-1,1]$ for any value of $y\in (-1,1)$. So we obtain that $T_{xy}$ is complete, only if $x=-y$. On the other hand, for $|x|<1$ and $x=-y$, the three segments $I_1$, $I_2$ and $I_3$ pass from their respective vertices to the interior of the opposite edge of that vertex and are all of the same length. The only direction for a longer rational segment would be $(1,-1)^T$, but we already saw that a segment in this direction has shorter lattice length than $I_2$ (and $I_3$). Since for $|x|= 1$, the triangle $T_{-xx}$ is not complete, we can now classify the triangles that are both complete and reduced as follows:

\begin{proposition}
\label{prop:triangle_classification}
A triangle $T\subset\RR^2$ is simultaneously reduced and complete with respect to $\ZZ^2$ if and only if it is equivalent to a triangle of the form
\[
T_x = \conv\left\{\binom{-1}{-1},\binom{x}{1}, \binom{1}{-x}\right\},
\]
where $0\leq  x < 1$.
\end{proposition}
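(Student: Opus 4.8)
The plan is to assemble the analysis already carried out in the preceding discussion into the two implications of the biconditional. The backbone is the reduced classification of Proposition~\ref{prop:reduced_triangles}: every reduced triangle is equivalent to some $T_{xy}$ with $-1\le x,y\le 1$ and $x+y\le 0$, so it suffices to decide, among these, which are additionally complete, and then to normalize the parameter range. Thus I would split the argument into a necessity part (a reduced and complete triangle must be an $T_x$) and a sufficiency part (each $T_x$ with $0\le x<1$ is reduced and complete), the former producing the equation $x=-y$ and the latter verifying the completeness criterion directly.

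For the forward direction, I would start from a triangle that is both reduced and complete and bring it into the form $T_{xy}$. Completeness of a simplex, by Proposition~\ref{prop:complete_simplex} together with Lemma~\ref{lemma:2}, forces every vertex to be one endpoint of a diameter segment whose other endpoint lies in the relative interior of the opposite edge. Since $0\in\inter T_{xy}$ (excluding the degenerate corner $x=y=-1$, which is not complete), the diameter exceeds $1$, so every diameter direction is a nonzero lattice point of $(T_{xy}-T_{xy})\cap[-1,1]^2$; this restricts the admissible directions at each vertex to the handful of primitive vectors singled out in the discussion. Computing the three lengths $\Vol_1(I_1),\Vol_1(I_2),\Vol_1(I_3)$ and imposing that they coincide (each must equal the common diameter) yields a single rational identity which, after clearing the denominator $2-(x+y)$, is quadratic in $x$ with roots $x=\tfrac{3-y}{y+1}$ and $x=-y$. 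The first root lies outside $[-1,1]$ for every admissible $y$, leaving $x=-y$.

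For the converse, I would fix $x=-y$ with $|x|<1$ and verify completeness directly via Proposition~\ref{prop:endpoints}: the three segments $I_1,I_2,I_3$ then share a common lattice length, each runs from a vertex into the relative interior of the opposite edge, and the only competing direction $(1,-1)^T$ produces a strictly shorter segment, so every edge carries a diameter segment ending in its relative interior. Reducedness is automatic from Proposition~\ref{prop:reduced_triangles}, since $x+(-x)=0\le 0$. The boundary cases $|x|=1$ must be discarded, as $T_{-xx}$ is then not complete. Finally, the coordinate swap $(u_1,u_2)\mapsto(u_2,u_1)$ is unimodular and sends $T_{x,-x}$ to $T_{-x,x}$, identifying $T_x$ with $T_{-x}$; this lets me restrict the parameter to $0\le x<1$ and yields the stated normal form.

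The main obstacle is the bookkeeping in the forward direction: one must be certain that the listed candidate directions exhaust all integer directions capable of realizing the diameter at each vertex, so that equating the three computed lengths genuinely \emph{characterizes} completeness rather than being merely necessary. A secondary subtlety, which is exactly where the strict inequality $|x|<1$ enters, is confirming that the diameter-segment endpoints fall in the \emph{relative interiors} of the opposite edges and not at vertices, since this is precisely the hypothesis required to invoke Proposition~\ref{prop:endpoints}.
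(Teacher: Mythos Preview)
Your proposal is correct and follows essentially the same route as the paper: reduce to the family $T_{xy}$ via the classification of reduced triangles, use the fact that in a complete simplex every vertex is the endpoint of a diameter segment to the opposite edge, restrict the admissible diameter directions to the nonzero lattice points of $[-1,1]^2$, equate the three segment lengths to obtain the quadratic with roots $x=\tfrac{3-y}{y+1}$ and $x=-y$, and then verify directly that $T_{x,-x}$ with $|x|<1$ is complete while $|x|=1$ is not. Your explicit mention of the coordinate swap $(u_1,u_2)\mapsto(u_2,u_1)$ to normalize the parameter to $0\le x<1$ is a small clarification the paper leaves implicit.
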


Next, we show that the triangles obtained in Proposition \ref{prop:triangle_classification} are indeed the only complete triangles up to equivalence. For this, we need two elementary lemmas.

\begin{lemma}
\label{lemma:diagonal_point}
    Let $C\subset\RR^2$ be an origin-symmetric convex body such that $\inter C\cap \ZZ^2=\{0\}$ and $\{\pm e_1, \pm e_2\}\subset C$. If $C$ contains a lattice point other than $0$, $\pm e_1$, or $\pm e_2$, it also contains one of the points $(1,1)^T$ and $(1,-1)^T$.
\end{lemma}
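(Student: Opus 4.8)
The plan is to let $p=(a,b)\in\ZZ^2$ be a lattice point of $C$ lying outside $\{0,\pm e_1,\pm e_2\}$ and to produce one of the four corners $(\pm1,\pm1)$ as an explicit convex combination of $p$ together with the points $\pm e_1,\pm e_2$, which are already known to lie in $C$. Since $C$ is origin-symmetric, exhibiting any one of the four corners in $C$ is equivalent to exhibiting $(1,1)$ or $(1,-1)$, so it suffices to locate a single corner. The argument rests on two observations: first, that hollowness forces $p$ to have both coordinates nonzero, and second, that once this is known the relevant corner is automatically a convex combination.

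First I would show that $ab\neq0$. Suppose to the contrary that $b=0$; the case $a=0$ is identical after exchanging the coordinates. As $p\notin\{0,\pm e_1\}$ we then have $|a|\geq2$, and replacing $p$ by $-p\in C$ we may assume $a\geq 2$. The point $e_1=(1,0)$ is then a convex combination of $(a,0)=p$, $e_2$ and $-e_2$ with strictly positive weights, namely
\[
e_1=\tfrac{1}{a}\,p+\tfrac{a-1}{2a}\,e_2+\tfrac{a-1}{2a}\,(-e_2).
\]
Since these three vertices are affinely independent and lie in $C$, this places $e_1$ in the interior of $C$, contradicting $\inter C\cap\ZZ^2=\{0\}$. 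Hence $a\neq0$ and, symmetrically, $b\neq0$.

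With both coordinates nonzero, I would replace $p$ by $-p$ if necessary so that $a\geq 1$, and then split on the sign of $b$. If $b\geq 1$, the identity
\[
(1,1)=\tfrac{b-1}{a+b-1}\,e_1+\tfrac{a-1}{a+b-1}\,e_2+\tfrac{1}{a+b-1}\,p
\]
expresses $(1,1)$ as a convex combination: the denominator $a+b-1\geq1$ is positive and all three weights are nonnegative because $a,b\geq1$, so $(1,1)\in C$. If instead $b\leq-1$, the mirror-image computation with $e_2$ replaced by $-e_2$ gives
\[
(1,-1)=\tfrac{-b-1}{a-b-1}\,e_1+\tfrac{a-1}{a-b-1}\,(-e_2)+\tfrac{1}{a-b-1}\,p\in C,
\]
where nonnegativity now uses $a\geq1$ and $-b\geq1$. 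Either way $C$ contains one of $(1,1)$, $(1,-1)$, as claimed.

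The calculations are routine; the only genuine subtlety lies in the first step and its reliance on hollowness. One must resist the temptation to ``reflect $C$ across a coordinate axis'' in order to merge the two sign cases, since $C$ is assumed only origin-symmetric and need not be invariant under such reflections — this is precisely why the $b\geq1$ and $b\leq-1$ cases require two separate (though symmetric) convex combinations. The key conceptual point is that hollowness rules out any lattice point of $C$ on the coordinate axes beyond $\pm e_1,\pm e_2$, which is exactly what forces $p$ into the ``off-axis'' position from which a corner becomes a convex combination of $p$ and two of the basis points.
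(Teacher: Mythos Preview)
Your proof is correct and follows essentially the same approach as the paper: both arguments first use hollowness to see that the extra lattice point must lie in an open quadrant, and then exhibit the desired corner as an explicit convex combination of known points of $C$. The only cosmetic difference is that the paper uses the triple $\{0,e_2,p\}$ (with the origin absorbing the slack) whereas you use $\{e_1,e_2,p\}$, and you spell out the sign-of-$b$ case split that the paper handles by a tacit ``without loss of generality''.
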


\begin{proof}
    Let $x=(a,b)\neq \pm e_i$ be a non-zero lattice point in $C$. As the standard basis vectors are on the boundary of $C$, $x$ must be in the interior of one of the four quadrants. Without loss of generality, we can assume that $x\in\ZZ_{>0}^2$. Also, we can assume that $a\geq b$. We may write
    \[
    \binom{1}{1} = \frac{1}{a} x + \left(1-\frac ba\right) e_2.
    \]
    Since $a$ and $b$ are integers with $0<b\leq a$, it follows that $1-b/a = (a-b)/a\in [0,1]$ and $1/a + (1-b/a) = (1-b+a)/a \leq 1$. Hence, $(1,1)^T\in \conv\{0,e_2,x\}\subset C$.
\end{proof}

\begin{lemma}
\label{lemma:disjoint_segments}
    Consider two disjoint segments $I_i=[v_i,w_i]$. $i=1,2$, and define the point $v_3$ as the intersection of the lines $\aff\{v_1,w_2\}$ and $\aff\{v_2,w_1\}$ (if any). Then the triangle $T=\conv\{v_1,v_2,v_3\}$ does not contain both of the segments $I_1$ and $I_2$. 
\end{lemma}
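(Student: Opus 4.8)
The plan is to prove the contrapositive: if the triangle $T=\conv\{v_1,v_2,v_3\}$ contains both segments $I_1=[v_1,w_1]$ and $I_2=[v_2,w_2]$, then $I_1$ and $I_2$ intersect. I would assume throughout that $v_1,v_2,v_3$ are affinely independent, so that $T$ is a genuine triangle, and that the point $v_3$ exists as in the hypothesis; the degenerate situations, in which $v_3$ fails to exist or the three points are collinear, are either excluded by the ``if any'' clause or dispatched by inspection. The first, entirely elementary, step is to observe that since $v_1$ is a vertex of $T$ and $T$ is convex, we have $I_1=[v_1,w_1]\subseteq T$ if and only if $w_1\in T$, and likewise $I_2\subseteq T$ if and only if $w_2\in T$.

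The key step is then to locate the feet $w_1$ and $w_2$. By the very definition of $v_3$ we have $v_3\in\aff\{v_2,w_1\}$, so (using $v_2\neq v_3$) the line $\aff\{v_2,v_3\}$ equals $\aff\{v_2,w_1\}$ and in particular contains $w_1$. Since $[v_2,v_3]$ is an edge of $T$, its affine hull is a supporting line of $T$ meeting $T$ exactly in that edge, i.e.\ $T\cap\aff\{v_2,v_3\}=[v_2,v_3]$. Combined with $w_1\in T$ this forces $w_1\in[v_2,v_3]$; the symmetric argument gives $w_2\in[v_1,v_3]$. Hence $I_1$ and $I_2$ are cevians joining $v_1$ and $v_2$, respectively, to points of their opposite edges.

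It remains to show that two such cevians must meet. The segment $I_1=[v_1,w_1]$ splits $T$ into the two subtriangles $A=\conv\{v_1,v_2,w_1\}$ and $B=\conv\{v_1,v_3,w_1\}$, which satisfy $A\cup B=T$ and $A\cap B=I_1$. Now $v_2\in A$, while $w_2\in[v_1,v_3]\subseteq B$, so the two relatively closed subsets $I_2\cap A$ and $I_2\cap B$ are both nonempty and cover the connected set $I_2\subseteq T=A\cup B$; hence they cannot be disjoint, and any common point lies in $A\cap B=I_1$. Thus $I_1\cap I_2\neq\emptyset$, contradicting the disjointness of $I_1$ and $I_2$. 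I expect the main subtlety to lie not in this final connectedness argument---which is robust and automatically absorbs the boundary cases in which a foot $w_i$ coincides with a vertex and a cevian degenerates to an edge of $T$---but rather in cleanly justifying the reduction $w_1\in[v_2,v_3]$, which is exactly where the non-degeneracy of $T$ and the defining property of $v_3$ enter.
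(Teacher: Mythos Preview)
Your proof is correct and follows essentially the same approach as the paper's. Both arguments hinge on the observation that $w_1\in[v_2,v_3]$ (and symmetrically $w_2\in[v_1,v_3]$), so that $I_1$ and $I_2$ are cevians of $T$, and then conclude that such cevians must cross. Your connectedness argument via the partition $T=A\cup B$ with $A\cap B=I_1$ is a slightly more explicit version of the paper's one-line assertion that ``any segment $I\subset T$ that connects $v_2$ to the line $\ell_1$ must intersect $[v_1,w_1]$''; the underlying geometry is identical.
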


\begin{proof}
    Let $\ell_1=\aff\{v_1,w_2\}$ and $\ell_2 = \aff\{v_2,w_1\}$ and suppose that $I_1\subset T$, i.e., $T$ contains the triangle $T^\prime=\conv\{v_1,v_2,w_1\}$. Since $v_1$ and $w_1$ are on the two edges of $T$ adjacent to $v_2$, any segment $I\subset T$ that connects $v_2$ to the line $\ell_1$ must intersect $[v_1,w_1]=I_1$. But $I_1$ and $I_2$ are disjoint, so $I_2$ cannot be contained in $T$.
\end{proof}

\begin{proof}[Proof of Theorem \ref{thm:triangles}]
    Let $T$ be a complete triangle. After applying a unimodular transformation and a dilation we may assume that $\diam(T)=1$ and that the standard basis vectors are among the diameter realizing directions of $T$. Thus, there is a horizontal segment $I_1\subset T$ and a vertical segment $I_2\subset T$ that both realize the diameter of $T$. By Proposition \ref{prop:complete_simplex}, $I_i$ connects a vertex $v_i$ to a point $w_i$ in the relative interior of the edge $E_i$ opposite to $v_i$, $i=1,2$. Again, after applying a unimodular transformation if necessary, we can assume that the right endpoint of $I_1$ is $v_1$ and that the upper endpoint of $I_2$ is $v_2$. The triangle $T$ is now uniquely described; the third vertex $v_3$ has to be the intersection of the lines $\aff\{v_1,w_2\}$ and $\aff\{v_2,w_1\}$ (in particular, these lines are not parallel).

    In view of Lemma \ref{lemma:disjoint_segments}, $I_1$ and $I_2$ intersect. After a translation, we may assume that $I_1=[(x-1,y)^T,(x,y)^T]$ and $I_2=[0,e_2]$, for certain $0<x,y< 1$ (Note that we cannot have equality in one of the constraints on $x$ and $y$, since then, one of the segments $I_i$ would be an edge of $T$). In order for the other endpoints $0$ and $(x-1,y)^T$ to be part of the triangle $\conv\{v_1,v_2,v_3\}$ it is necessary that $x> y$.

    By Proposition \ref{prop:complete_simplex}, there exists a third pair of lattice directions in which the lattice diameter of $T$ is attained. Since by assumption, $\diam(T)=1$, the difference body $C=T-T$ fulfills the assumptions of Lemma \ref{lemma:diagonal_point}. So the diameter of $T$ has to be attained in direction $(1,-1)^T$ or $(1,1)^T$ as well. The direction $(1,-1)^T$ can be ruled out; If it was diameter realizing, the corresponding segment in $T$ had to include $v_1$ as a vertex. But then the segment would have a shorter lattice length than $I_1$. So there is a diameter realizing segment $I_3$ in direction $(1,1)^T$. The only vertex that this segment can contain is $v_3$. So we have $I_3 = [v_3,w_3]$, where $w_3$ is the intersection of $v_3+\spann\{(1,1)^T\}$ with the edge $E_3=[v_1,v_2]$.

    We saw that $T$ is complete, if and only if it is equivalent to a triangle of the form $\conv\{e_2,(x,y)^T,v_3\}$, where $0<y<x<1$ and $\Vol_1(I_3)=1$ (with $v_3$ and $I_3$ as above). In the the next step we fix the value of $x\in (0,1)$ and show that there is exactly one value of $y\in (0,x) $ such that $\Vol_1(I_3)=1$. In this setting the points $v_3$ and $w_3$ depend on $y$, which is why we denote them by $v_3(y)$ and $w_3(y)$.
    Computing the point of intersection of the lines $\aff\{v_1,w_2\} = \aff\{(x,y)^T,(0,0)^T\}$ and $\aff\{v_2,w_1\} = \aff\{(0,1)^T,(x-1,y)^T\}$ shows that 
    \[
    v_3(y) = \begin{pmatrix}
        \frac{x(x-1)}{x-y}\\
        \frac{y(x-1)}{x-y}
    \end{pmatrix}.
    \]
   In particular, as $y$ increases, $v_3(y)$ moves strictly monotonously downward on the diagonal line $\ell=\{ (a,b)^T : a-b = x-1\}$. The degenerate cases $y\to 0$ and $y\to x$ correspond to $(x-1,0)$ and the ``point at infinity'' of $\ell$. Since $w_3(y)$ is by definition the intersection of $v_3(y)+\spann\{(1,1)^T\}$ with $[e_1,(x,y)^T]$, it follows from the fact that all $v_3(y)$ lie on the same diagonal $\ell$, that $w_3(y)$ is the intersection of $\ell$ and $[e_1,(x,y)^T]$. Since $\ell$ is independent of $y$, the point $w_3(y)$ moves strictly monotonously upward on $\ell$ as $y$ increases. In total, we find that $\Vol_1(I_3(y))$ is a strictly monotonous function in $y\in (0,x)$ (see also Figure \ref{fig:v3y}). By considering, e.g., a projection on a horizontal line, we see that $\lim_{y\to 0} \Vol_1(I_3(y)) < 1$. On the other hand, since $v_3(y)$ is unbounded for $y\to x$, we have $\lim_{y\to x} \Vol_1(I_3(y)) = \infty$. Consequently, for any $x\in (0,1)$, there is exactly one $y_x\in (0,x)$ such that the triangle $\Delta_x = \conv\{e_2,(x,y_x)^T,v_3(y_x)\}$ is complete. 
    
    To conclude, we recall the triangles $T_x = \conv\{(-1,-1)^T,(x,1),(1,-x)^T\}$ from Proposition \ref{prop:triangle_classification}. Let $r\in (0,1)$ be the fraction of the horizontal diameter realizing segment that is to the right of the vertical diameter segment. By considering the limits $x\to \pm 1$, we see that $r$ can attain any value between 0 and 1. Thus, the triangle $\Delta_r$ is in fact equivalent to one of the triangles $T_x$, which finishes the proof.
        \begin{figure}[!htb]
        \centering
        \includegraphics[width= .4\textwidth]{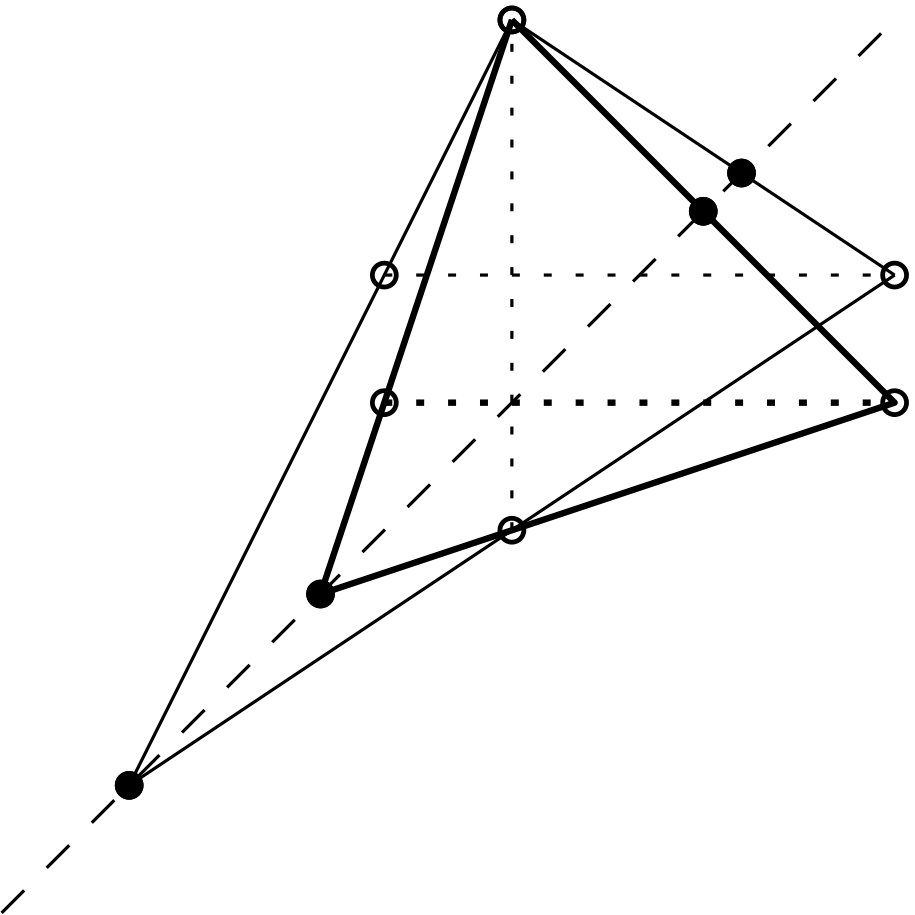}
        \caption{Illustration of the monotonicity of $y\mapsto \Vol_1(I_3(y))$. As $y$ increases, the black vertex $v_3(y)$ moves downward on the diagonal dashed line $\ell$, while the other endpoint $w_3(y)$ of $I_3(y)$ moves upward on $\ell$.}
        \label{fig:v3y}
    \end{figure}
\end{proof}

At this point, it is natural to ask whether Theorem \ref{thm:triangles} extends to arbitrary $d$-simplices. The following construction, however, leads to counterexamples for $d=3$ (Remark \ref{rem:3simplex} (3)).

\subsection{A Family of Complete Tetrahedra}

We start with the regular tetrahedron
\begin{equation}
\label{eq:reg_simplex}
\Delta = \conv\{0, e_i+e_j \colon 1\leq i < j\leq 3\}\subset [0,1]^3
\end{equation}
and we let \[
D=\Delta - \Delta = \conv\{ \pm e_i \pm e_j\colon 1\leq i < j\leq 3\}\subset C_3
\]
be its difference body. Note that $D$ is the convex hull of the midpoints of the edges of the cube $C_3$. Thus, $D$ has 6 quadrilateral facets corresponding to the original facets of $C_3$ and 8 triangular facets corresponding to the 8 vertices of $C_3$. Moreover, each quadrilateral facet is of the form $e-f$, where $e,f\subset\Delta$ are opposite edges of $\Delta$, and each triangular facet is of the form $\pm (F-v)$, where $F\subset\Delta$ is a facet of $\Delta$ and $v\in \Delta$ is the vertex opposite to it.

Note that a segment $I = [a,b]\subset\Delta$ passes from a vertex $v\in \Delta$ to the interior of the facet opposite to $v$, if and only if, $b-a$ is in the interior of one of the triangular facets of $D$. Hence, $\Delta$ is complete with respect to a 3-lattice $\Lambda\subset\RR^3$, if and only if  for some $\lambda>0$, we have $\inter (\lambda D) \cap \Lambda =\{0\}$ and $\lambda D$ possesses a lattice point of $\Lambda$ in the relative interior of each of its triangular facets.

For each choice of signs $\sigma\in \{\pm 1\}^3$, there is a triangular facet
\[
F_\sigma = \{ x\in C_3 \colon \sigma \cdot x = 2\}\subset D
\]
of $D$. We start with a point $(\alpha,\beta,\gamma)^T\in \inter F_{(1,1,1)}$, i.e., we have $\alpha+\beta+\gamma = 2$ and $\alpha,\beta,\gamma < 1$. Moreover, we want to require that $\alpha,\beta,\gamma\geq \frac 12$. Now let 
\begin{equation}
\label{eq:sym_lattice}
\Lambda = \spann_{\ZZ}\left\{
\begin{pmatrix}
-\alpha\\
\beta\\
\gamma
\end{pmatrix},
\begin{pmatrix}
    \alpha\\
    -\beta\\
    \gamma
\end{pmatrix},
\begin{pmatrix}
    \alpha\\
    \beta\\
    -\gamma
\end{pmatrix}
\right\}.
\end{equation}
Since
\[
\begin{pmatrix}
    \alpha\\
    \beta\\
    \gamma
\end{pmatrix}
=
\begin{pmatrix}
-\alpha\\
\beta\\
\gamma
\end{pmatrix}+
\begin{pmatrix}
    \alpha\\
    -\beta\\
    \gamma
\end{pmatrix}+
\begin{pmatrix}
    \alpha\\
    \beta\\
    -\gamma
\end{pmatrix},
\]
the difference body $D$ contains a lattice point of $\Lambda$ in each of the triangular facets $F_\sigma$, $\sigma\in\{\pm 1\}^3$. It remains to check that $\inter D \cap \Lambda = \{0\}$. To this end, we first consider a point of the form $x=(0,b,c)\in \inter D \cap \Delta$. There exist $z_1,z_2,z_3\in\ZZ$ with 
\[
\begin{pmatrix}
    0\\
    b\\
    c
\end{pmatrix}
=
z_1\begin{pmatrix}
    -\alpha\\
    \beta\\
    \gamma
\end{pmatrix}
+z_2\begin{pmatrix}
    \alpha\\
    -\beta\\
    \gamma
\end{pmatrix}
+z_3\begin{pmatrix}
    \alpha\\
    \beta\\
    -\gamma
\end{pmatrix}.
\]
It follows from the first equation that $z_1 = z_2+z_3$. Thus, the lattice point $x$ is of the form
\[
x=\begin{pmatrix}
    0\\
    2z_3\beta\\
    2z_2\gamma\\
\end{pmatrix}.
\]
If either $z_3$ or $z_2$ is non-zero, then, since $\beta,\gamma\geq \frac 12$, the corresponding entry has an absolute value of at least 1. But then $x$ is on the boundary of $C_3$. Hence, $z_2=z_3=0$, which implies $x=0$. 

By symmetry, we can conclude that a non-zero point in $x\in \inter D\cap\Lambda$ cannot have a zero coordinate. So, towards a contradiction, let $x=(a,b,c)^T\in \inter D\cap \Lambda$, where $a,b,c\in\RR\setminus\{0\}$.  Since both $D$ and $\Lambda$ are symmetric with respect to the coordinate hyperplanes, we may even assume that $a,b,c > 0$.  Since $x\in\Lambda$, there are positive integers $z_1,z_2,z_3\geq 1$ such that $x = (z_1\alpha, z_2\beta, z_3\gamma)^T$. It now follows from $x\in \inter D$ and the symmetry properties of $D$ that $(\alpha,\beta,\gamma)^T\in \inter D$, a contradiction to $(\alpha,\beta,\gamma)^T\in F_{(1,1,1)}\subset\bd D$.

So we have established that $\inter D\cap\Lambda = \{0\}$. 
This proves the following proposition.

\begin{proposition}
\label{prop:3simplex}
The regular simplex $\Delta$ defined in \eqref{eq:reg_simplex} is complete with respect to the lattice $\Lambda$ as defined in \eqref{eq:sym_lattice} for any choice of
\[
\begin{pmatrix}
    \alpha\\
    \beta\\
    \gamma
\end{pmatrix}
\in \left\{
\begin{pmatrix}
    a\\
    b\\
    c
\end{pmatrix}
\colon
\frac 12 \leq a,b,c < 1,~a+b+c = 2
\right\}.
\]
\end{proposition}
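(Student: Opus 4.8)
The plan is to translate completeness of $\Delta$ into two lattice conditions on its difference body $D=\Delta-\Delta$, and then to verify these conditions directly from the explicit generators of $\Lambda$. Since $\Delta$ is a simplex, Proposition~\ref{prop:endpoints} together with Lemma~\ref{lemma:2} shows that $\Delta$ is complete exactly when every facet is met in its relative interior by a diameter segment, and that such a segment must join a vertex of $\Delta$ to the relative interior of the opposite facet. Under the dictionary of Lemma~\ref{lemma:successive_minima}, the diameter directions are the nonzero lattice points on the boundary of $\sukz_1(D;\Lambda)\,D$, and the segments running from a vertex to the interior of the opposite facet are precisely those whose direction lies in the relative interior of one of the eight triangular facets $F_\sigma$, $\sigma\in\{\pm1\}^3$, of $D$. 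I would therefore first isolate the equivalent statement: $\Delta$ is complete if and only if there is a $\lambda>0$ with $\inter(\lambda D)\cap\Lambda=\{0\}$ and with a lattice point in the relative interior of every triangular facet of $\lambda D$.

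I would then take $\lambda=1$, which is justified by the choice of $(\alpha,\beta,\gamma)^T\in\relint F_{(1,1,1)}$: this point is a lattice vector, being the sum of the three generators in \eqref{eq:sym_lattice}, and the common invariance of $D$ and $\Lambda$ under sign changes of coordinates transports it to a lattice point in $\relint F_\sigma$ for each $\sigma$, which verifies the facet condition. Provided the interior condition below holds, $(\alpha,\beta,\gamma)^T\in\bd D\cap\Lambda$ pins down $\sukz_1(D;\Lambda)=1$, so that $\diam_\Lambda(\Delta)=1$ and these boundary lattice points are indeed diameter directions.

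The heart of the argument, and the main obstacle, is to prove $\inter D\cap\Lambda=\{0\}$; this is where the hypotheses $\tfrac12\le\alpha,\beta,\gamma<1$ enter decisively. I would split a candidate lattice point according to how many of its coordinates vanish, relying throughout on $D\subset C_3$ and on the unconditionality (coordinate-reflection symmetry) of both $D$ and $\Lambda$. For a point with a zero coordinate, say $(0,b,c)^T\in\Lambda$, comparing the generator coefficients forces a linear relation among them that collapses the point to the form $(0,2z_3\beta,2z_2\gamma)^T$; since $\beta,\gamma\ge\tfrac12$, any nonzero such point has a coordinate of modulus at least $1$ and hence lies on $\bd C_3\supseteq\bd D$, not in $\inter D$, and symmetry disposes of the remaining coordinate hyperplanes. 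For a point with all coordinates nonzero I would use the sign symmetry to move it into the positive orthant, write it as $(m\alpha,n\beta,p\gamma)^T$ with integers $m,n,p\ge1$, and then invoke that an unconditional convex body is a lower set in each orthant: if $(m\alpha,n\beta,p\gamma)^T\in\inter D$, then the coordinatewise-smaller vector $(\alpha,\beta,\gamma)^T$ would also lie in $\inter D$, contradicting $(\alpha,\beta,\gamma)^T\in F_{(1,1,1)}\subset\bd D$. The one step that needs care is precisely this last monotonicity argument -- checking that shrinking the coordinates of an interior point keeps it in the \emph{open} interior, which follows by reflecting in each coordinate hyperplane and taking convex combinations of the resulting interior points; everything else is routine bookkeeping with the explicit generators.
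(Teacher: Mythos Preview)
Your proposal is correct and follows essentially the same route as the paper: reduce completeness of $\Delta$ via Proposition~\ref{prop:endpoints} to the condition that $\inter D\cap\Lambda=\{0\}$ together with a lattice point in the relative interior of each triangular facet $F_\sigma$ of $D$, verify the facet condition by summing the generators and using the coordinate-sign symmetry of $\Lambda$ and $D$, and then rule out interior lattice points by the two-case analysis (a zero coordinate forces the form $(0,2z_3\beta,2z_2\gamma)^T$ and hence a coordinate of modulus $\ge 1$; all coordinates nonzero reduces by unconditionality to $(\alpha,\beta,\gamma)^T\in\inter D$, a contradiction). Your explicit justification of the monotonicity step via reflections and convex combinations is in fact slightly more detailed than what the paper writes.
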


\begin{remark}
\label{rem:3simplex}
    \begin{enumerate}
        \item There is no restriction in considering the regular tetrahedron $\Delta$; Since all simplices are affinely equivalent, finding a lattice with respect to which a fixed simplex is complete is the same as finding a simplex which is complete with respect to a fixed lattice. 
        \item If one of the parameters ($\alpha$, say) is equal to $\frac 12$, the difference body $D$ contains the point 
        \[
        \begin{pmatrix}
            1\\
            0\\
            0
        \end{pmatrix}
        =\begin{pmatrix}
            \frac 12\\
            \beta\\
            \gamma
        \end{pmatrix}
        +\begin{pmatrix}
            \frac 12\\
            -\beta\\
            -\gamma
        \end{pmatrix} \in \Lambda,
        \]
        which is then a boundary point in one of the quadrilateral facets of $D$. For the tetrahedron $\Delta$, this implies that the diameter is not only attained by the four segments connecting the facets to their opposite vertices as guaranteed by Proposition \ref{prop:complete_simplex}, but also by an additional segment between two opposite edges.
        \item For a generic choice of the parameters, such as $(0.65, 0.65, 0.7)$, the tetrahedron $\Delta$ is not reduced with respect to $\Lambda$, as can be checked using \texttt{sagemath} (cf.\ Remark \ref{rem:locals}). This shows that Theorem \ref{thm:triangles} does not hold in dimension 3.
        \item The same tetrahedron has the property that also $(\Delta - \centroid(\Delta))^\star$ is not reduced with respect to $\Lambda^\star$. Thus, it confirms \eqref{eq:implication2} from Remark \ref{rem:non_symmetric_duality}.
    \end{enumerate}
\end{remark}

\section{Comparison to the Euclidean Case}
\label{sec:euclid}

 The study of Euclidean reduced convex bodies is in part motivated by the P\'{a}l-Kakeya problem, which asks for the maximal Euclidean width among convex bodies $C\subset\RR^d$ with $\vol(C)\leq 1$.
This problem is similar in spirit to the flatness problem, in which the Euclidean width is replaced by the lattice width and the volume constraint is replaced by the condition $|\inter C \cap \Lambda| < 1$.
 Since the volume functional is strictly monotonous, it is clear that all the bodies attaining the maximum width in the P\'al-Kakeya problem are Euclidean reduced. It was shown by P\'al \cite{pal} in 1921 that for $d=2$, the maximum is achieved by the regular triangle. In higher dimensions, however, the problem has been open for more than a century. The fact that the natural generalization of the regular triangle, the regular simplex, is not Euclidean reduced for $d\geq 3$ \cite{martini_simplices} shows that higher dimensions require a different approach. 


A more homogeneous lattice variant of the P\'al-Kakeya problem, in which the study of lattice reduced convex bodies could play an important role, is the following. One replaces the Euclidean volume $\vol(K)$ by the \emph{lattice-normalized volume} $\Vol_\Lambda(K):=\tfrac{\vol(K)}{\det\Lambda}\leq 1$. Thus the task is to find the maximum lattice width $\wdt_\Lambda(K)$ among convex bodies $K$ with normalized volume $\Vol_\Lambda(K)\leq 1$. Makai conjectured that the maximum is $\sqrt[d]{\tfrac{2^dd!}{d+1}}$ and that it is attained only by simplices \cite{makai}. For instance, the simplex $S_d=\conv\{-\eins,e_1,\dots,e_d\}$ has width exactly the conjectured maximum, if it is dilated so that its volume is 1. We refer to \cite{gonzalez_schymura, henk_xue} and the references therein for more information on Makai's conjecture. Since the lattice normalized volume is a strictly monotonous functional, all the extremal cases in Makai's conjecture are necessarily lattice reduced.

Motivated by these analogies, we want to finish the paper by discussing discrete analogs of classical properties and conjectures about Euclidean reduced and complete convex bodies.

\subsection{Relations between diameter and width}
In the Euclidean setting, it is evident that the diameter is bounded from below by the width of a convex body, that is, we have $\diam_\RR(C)\geq \wdt_\RR(C)$ for any convex body $C\subset\RR^d$. The lattice analog of this fact is inequality \eqref{eq:wd_ineq} in Remark \ref{rem:zorn}. In general, it is impossible to reverse these inequalities, even at the cost of a constant depending on the dimension. However, if $C\subset\RR^2$ is a Euclidean reduced convex body in the plane, it was shown by Lassak \cite{lassak90} that
\begin{equation}
    \label{eq:lassak_diam}
    \diam_\RR(C)\leq \sqrt{2}\wdt_\RR(C).
\end{equation}
Also, if $C\subset\RR^d$ is a Euclidean complete convex body, we readily have $\diam_\RR(C) = \wdt_\RR(C)$, since $C$ is of constant width (recall that in the Euclidean setting, ``complete'' is stronger than ``reduced''). In the lattice setting in the plane, the following analogue to Lassak's statement holds.

\begin{proposition}
    \label{prop:reverse_wd_ineq}
    Let $C\subset\RR^2$ be lattice reduced or complete with respect to a lattice $\Lambda\subset\RR^2$. Then, $\diam_\Lambda(C)\leq\wdt_\Lambda(C)$. 
\end{proposition}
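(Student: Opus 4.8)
The plan is to reduce to the case $\Lambda=\ZZ^2$ (Lemma \ref{lemma:properties}) and to exploit that the target inequality $\diam_\Lambda(C)\le\wdt_\Lambda(C)$ is invariant under all planar equivalences: both quantities scale linearly under dilation and are invariant under translations and unimodular maps. I would then split into the reduced case and the complete case, since these rest on genuinely different structural information (spanning width directions versus spanning diameter directions), and no duality is available here because $C$ need not be origin-symmetric.

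For the reduced case I would invoke the normal form of Lemma \ref{lemma:square}: up to equivalence $C$ may be assumed to lie in $[-1,1]^2$ with each edge of the square meeting $C$ and $\wdt_{\ZZ^2}(C)=2$. Then $C-C\subseteq[-2,2]^2$, so $\tfrac12(C-C)\subseteq[-1,1]^2$. Since the interior of any subset of $[-1,1]^2$ is contained in $(-1,1)^2$, which meets $\ZZ^2$ only in the origin, we obtain $\inter(\tfrac12(C-C))\cap\ZZ^2=\{0\}$, i.e. $\sukz_1(C-C;\ZZ^2)\ge\tfrac12$. By Lemma \ref{lemma:successive_minima} this reads exactly $\diam_{\ZZ^2}(C)\le 2=\wdt_{\ZZ^2}(C)$, which is the claim.

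For the complete case I would normalize $\diam_\Lambda(C)=1$; then $C$ is a polygon (Theorem \ref{thm:complete_meta}) and, since $\sukz_1(C-C;\ZZ^2)=1$, every lattice point on $\bd(C-C)$ is primitive and arises as the difference $a-b$ of the endpoints of a diameter segment $[b,a]\subset C$. The crucial geometric input is that the diameter directions of a complete planar polygon span $\RR^2$: by Proposition \ref{prop:endpoints} the relative interior of every edge contains an endpoint of some diameter segment, while by Lemma \ref{lemma:2} all (Euclidean-maximal, hence in particular all diameter) segments of a fixed direction run between a single pair of opposite faces and therefore account for at most two edges; as a polygon has at least three edges, at least two non-parallel, hence linearly independent, diameter directions must occur. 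Now suppose for contradiction that $\wdt_{\ZZ^2}(C)<1$ and pick a primitive $y\in\ZZ^2$ with $\wdt(C;y)<1$. For every diameter direction $v$ we have $|y\cdot v|\le h(C-C;y)=\wdt(C;y)<1$, while $y\cdot v\in\ZZ$, forcing $y\cdot v=0$. Thus all diameter directions lie in the line $y^\perp$, contradicting the fact that they span $\RR^2$. Hence $\wdt_{\ZZ^2}(C)\ge 1=\diam_{\ZZ^2}(C)$.

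The reduced case is essentially immediate once the normal form of Lemma \ref{lemma:square} is in hand, so I expect the main obstacle to be the complete case, and within it the step establishing that the diameter directions of a complete planar body span $\RR^2$; the integrality argument ($y\cdot v\in\ZZ$ together with $|y\cdot v|<1$) then closes the argument cleanly. Two small points deserve care: that a diameter segment of direction $v$ really is a Euclidean-maximal segment in that direction, so that Lemma \ref{lemma:2} is applicable, and that distinct diameter directions are genuinely non-parallel, which follows because every lattice point on $\bd(C-C)$ is primitive when $\diam_\Lambda(C)=1$.
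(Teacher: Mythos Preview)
Your argument is correct in both cases, and the underlying mechanism --- pairing a width direction $y\in\Lambda^\star$ with a diameter direction $v\in\Lambda$ and exploiting the integrality $y\cdot v\in\ZZ$ --- coincides with the paper's. The presentation differs, however. The paper treats both cases by one direct computation: pick a diameter direction $v$, choose (using that width directions span in the reduced case, respectively diameter directions span in the complete case) a width direction $y$ with $y\cdot v\neq 0$, and read off $\wdt_\Lambda(C)\ge \diam_\Lambda(C)\,|y\cdot v|\ge\diam_\Lambda(C)$ from the diameter segment. Your reduced case instead routes through the box normal form of Lemma~\ref{lemma:square} and the containment $C-C\subseteq[-2,2]^2$; this is valid but uses a somewhat stronger input (that the width directions contain a \emph{basis} of $\ZZ^2$) than the paper's bare observation that they span. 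Your complete case is the paper's argument in contrapositive form, with the merit that you spell out explicitly why the diameter directions of a complete polygon span $\RR^2$ (via Proposition~\ref{prop:endpoints} and Lemma~\ref{lemma:2}), whereas the paper only cites Proposition~\ref{prop:endpoints} and leaves the spanning implicit. In short: same idea, the paper's version is more uniform and economical, yours supplies more of the supporting detail in the complete case.
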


We note that equality holds if $C = \conv\{\pm e_1, \pm e_2\}$ (for lattice reduced bodies) or $C=[-1,1]^2$ (for lattice complete bodies).

\begin{proof}
    Let $C$ be lattice reduced with respect to $\Lambda$. Consider a diameter direction $v\in\Lambda\setminus\{0\}$ of $C$. Since $C$ is 2-dimensional it follows from Proposition \ref{prop:reduced_exposed} that there exist two independent width directions of $C$. So let $y\in\Lambda^\star$ be a width direction with $v\cdot y\neq 0$. Let $[a,a+\diam_\Lambda(C)v]\subset C$ be the diameter realizing segment in direction of $v$. We have
    \[
    \wdt_\Lambda(C) =\wdt(C;y) \geq |y\cdot(a+\diam_\Lambda(C)v -a)| = \diam_\Lambda(C)\,|y\cdot v| \geq \diam_\Lambda(C), 
    \]
    where we used that $|y\cdot v| \geq 1$. The argument for lattice complete bodies is analogous, the existence of a non-orthogonal pair of diameter and width directions is guaranteed by Proposition \ref{prop:endpoints}.
\end{proof}

Just like Lassak's inequality \eqref{eq:lassak_diam} does not generalize to higher dimensions (see \cite{lassak03}), it is impossible to extend Proposition \ref{prop:reverse_wd_ineq} to dimensions $d\geq 3$. As a counterexample one can consider the cube $C=[-1,1]^3$ together with the lattice $\Lambda = A_2 \oplus \alpha\ZZ\eins $, for any $\alpha \geq 1$. This is a lattice complete convex body (in fact, it is a special case of Proposition \ref{prop:lifting}, see also Figure \ref{fig:lifting}) with $\diam_\Lambda(C)=2$ and $\wdt_\Lambda(C) = 2/\alpha$. Since $\alpha\geq 1$ is arbitrary, this shows that there cannot be a constant $c>0$ such that $\diam_\Lambda(C)\leq c\cdot\wdt_\Lambda(C)$ holds for any 3-dimensional lattice complete convex body. For lattice reduced bodies, it suffices to consider $C^\star$ and $\Lambda^\star$, since $C$ is symmetric.

\subsection{Volumetric inequalities}

In Euclidean geometry, the isodiametric inequality states that the volume of a convex body $C\subset\RR^d$ of Euclidean diameter $d$ is bounded from above by the volume of a Euclidean ball of diameter $d$ \cite[Inequality (7.22)]{schneider}. Again, it is not possible to bound the volume of $C$ from below in terms of the diameter, if $C$ is arbitrary. For complete convex bodies in the plane, however, such a lower bound exists: For any Euclidean complete convex body $C \subset \RR^2$ of diameter $d$ we have \cite{Lebesgue14, Blaschke15}
\begin{equation}
    \label{eq:blaschkelebesgue}
    \vol(C) \geq \vol(U_d),
\end{equation}
where $U_d$ is a \emph{Releaux triangle} of diameter $d$, i.e., the intersection of three disks of radius $d$ centered at the vertices of a regular triangle of side length $d$. Generalizing \eqref{eq:blaschkelebesgue} to higher dimensions is an open problem, see for instance \cite{anciaux11,ccg96}.

For the Euclidean width, the dual statement to the isodiametric inequality is the P\'al-Kakeya problem, which asks for the optimal lower bound on $\vol(C)$ in terms of $\wdt_\RR(C)$. While it is not possible to bound $\vol(C)$ from above by $\wdt_\RR(C)$, Lassak conjectured a nice reverse version of P\'al's inequality in the plane \cite{lassak05}: If $C\subset\RR^2$ is a Euclidean reduced convex body with $\wdt_\RR(C)=w$, let $B_w$ be a Euclidean disk with width $w$. Do we have
\begin{equation}
    \label{eq:lassak_conjecture}
    \vol(C)\leq \vol(B_w)?
\end{equation}
Interestingly, the equality cases in \eqref{eq:lassak_conjecture} (if true) are not only the disks of width $w$, but also the ``quarter of a disk'' $\{ x\in \RR_{\geq 0}^2 \colon |x|\leq w\}$.

For lattice reduced and lattice complete convex bodies the volumetric analogs of the isodiametric inequality and the P\'al-Kakeya problem are Minkowski's first theorem and Makai's conjecture. Moreover,
in the plane we obtain inequalities similar to \eqref{eq:blaschkelebesgue} and \eqref{eq:lassak_conjecture} with comparatively little effort. Recall that $\Vol_\Lambda(C) = \vol(C)\det(\Lambda)^{-1}$ denotes the lattice normalized volume of $C$.

\begin{proposition}
    \label{prop:reverse_volume}
    Let $C\subset\RR^2$ be a convex body and let $\Lambda\subset\RR^2$ be a 2-dimensional lattice.
    \begin{enumerate}
        \item If $C$ is reduced with respect to $\Lambda$, then $\Vol_\Lambda(C) \leq \wdt_\Lambda(C)^2$.
        \item If $C$ is complete with respect to $\Lambda$, then $\Vol_\Lambda(C) \geq \tfrac 12 \diam_\Lambda(C)^2$.
    \end{enumerate}
    Both inequalities are sharp.
\end{proposition}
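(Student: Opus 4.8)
The plan is to treat the two parts separately, invoking in each case the structural result available for the relevant class. In both parts I would first use Lemma \ref{lemma:properties} to reduce to $\Lambda=\ZZ^2$, so that $\Vol_\Lambda=\vol$. This is legitimate because the quotient $\Vol_\Lambda(C)/\wdt_\Lambda(C)^2$ (resp.\ $\Vol_\Lambda(C)/\diam_\Lambda(C)^2$) is invariant under the translations, dilations, and lattice-respecting linear maps of Lemma \ref{lemma:properties}: the planar volume scales with the square of the linear factor, exactly as $\wdt^2$ and $\diam^2$ do.

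For (1), the reduced case, the work is essentially carried out by Lemma \ref{lemma:square}. After reducing to $\ZZ^2$, a reduced body is equivalent to some $Q\subset[-1,1]^2$ with $\wdt(Q)=2$ and a vertex on each edge of the square. Then $\Vol_{\ZZ^2}(Q)=\vol(Q)\le\vol([-1,1]^2)=4=\wdt(Q)^2$, which is the assertion. For sharpness I would use the twisted squares $Q_x$ of Example \ref{ex:dim2}(3): they are reduced with $\wdt(Q_x)=2$ and $\vol(Q_x)=2(1+x^2)\to 4$ as $x\to 1$, so the constant cannot be improved. (The limit $Q_1=[-1,1]^2$ is complete but not reduced, as it must be.)

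For (2), the complete case, I would first argue that a complete polygon admits two linearly independent diameter directions. A $2$-polytope has at least three edges, whereas by Lemma \ref{lemma:2} all diameter segments parallel to a fixed direction have endpoints confined to a single pair of opposite faces; if every diameter direction were a multiple of one $v$, then Proposition \ref{prop:endpoints}, which demands a diameter segment reaching the relative interior of each edge, would fail on the third edge. So choose independent primitive diameter directions $v_1,v_2$ with diameter segments $I_1,I_2\subset C$, whose displacement vectors are $\delta v_1,\delta v_2$, where $\delta=\diam_\Lambda(C)$. The core estimate is the elementary planar fact that two segments with displacement vectors $u,w$ satisfy $\vol(\conv(I_1\cup I_2))\ge\tfrac12|\det(u,w)|$: after an area-normalizing linear map one may take $u=e_1$, $w=e_2$ and $I_1=[(0,0),(1,0)]$, and then a short case distinction according to whether $I_2$ lies above, below, or across the $x$-axis shows that $\conv(I_1\cup I_2)$ contains a triangle (or two triangles sharing the base $I_1$) of total area at least $\tfrac12$. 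Applying this with $u=\delta v_1$, $w=\delta v_2$ and using $|\det(v_1,v_2)|\ge 1$ for independent vectors of $\ZZ^2$ yields $\vol(C)\ge\vol(\conv(I_1\cup I_2))\ge\tfrac12\delta^2$, which is (2). Sharpness follows from the complete triangles $T_x$ of Proposition \ref{prop:triangle_classification}: a direct computation gives $\vol(T_x)=\diam(T_x)=(3+x^2)/2$, so $\Vol_{\ZZ^2}(T_x)/\bigl(\tfrac12\diam(T_x)^2\bigr)=4/(3+x^2)\to 1$ as $x\to 1$.

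I expect the area estimate in (2) to be the main obstacle. A priori the two diameter segments need not intersect, so one cannot simply read off the ``quadrilateral-with-diagonals'' area $\tfrac12 d_1 d_2\sin\theta$. The value of the lemma above is precisely that crossing is unnecessary: when $I_2$ does not meet the $x$-axis between the endpoints of $I_1$ the relevant triangle only becomes larger, and when $I_2$ straddles the $x$-axis the two one-sided triangles add up to the same bound, so the estimate is robust. One could alternatively force an intersection via Lemma \ref{lemma:disjoint_segments}, but the triangle bookkeeping above is cleaner. A secondary point to verify is that the $\det\Lambda$ factors cancel correctly after reducing to $\ZZ^2$; this is immediate once one records that $|\det(v_1,v_2)|\ge\det\Lambda$ for independent primitive lattice vectors.
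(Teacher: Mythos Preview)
Your proof is correct and follows essentially the same strategy as the paper's: for (1) you enclose $C$ in a square determined by two independent width directions, and for (2) you bound from below the area of the convex hull of two non-parallel diameter segments. The differences are minor but worth recording. For the area bound in (2), the paper reduces (via a unimodular map) to segments parallel to $e_1,e_2$ and then appeals to a Steiner symmetrization argument (citing \cite[Remark 1.1]{henkhenzehernandez}) to handle the non-intersecting case; your elementary triangle case-split accomplishes the same thing self-containedly and, since you only use $|\det(v_1,v_2)|\ge 1$, you do not need to know that the two diameter directions form a lattice basis. For sharpness in (2), the paper instead uses the twisted squares $Q_x$ as $x\searrow 0$ (so that $\vol(Q_x)\to 2$ and $\diam(Q_x)\to 2$); your choice of the complete triangles $T_x$ with $x\to 1$ works just as well, and your computation $\vol(T_x)=\diam(T_x)=\tfrac{3+x^2}{2}$ is correct.
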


\begin{proof}
    Without loss of generality, let $\Lambda=\ZZ^2$ (cf. Lemma \ref{lemma:properties}). For (1), let $y_1,y_2\in\ZZ^2$ be two independent width directions of $C$. After applying a unimodular transformation and a translation, we obtain a copy of $C$ that is contained in the square $Q=[0,\wdt_{\ZZ^2}(C)]^2$. 

    For (2), let $S_1,S_2$ be two non-parallel lattice diameter realizing segments. Again, after applying a unimodular transformation, we can assume that $S_i$ is parallel to $e_i$. Let $P = \conv(S_1\cup S_2)\subseteq C$. If the segments $S_1$ and $S_2$ intersect, it is clear that $\vol(P) = \tfrac{1}{2}\diam_\Lambda(C)^2$. Otherwise, we have $\vol(P) > \tfrac{1}{2}\diam_\Lambda(C)^2$, as can be seen by first applying a Steiner symmetrization with respect to one of the affine hulls of the $S_i$ (see \cite[Remark 1.1]{henkhenzehernandez} for details). The inequality now follows from $P\subseteq Q$.

    In order to see that the inequalities are sharp, we consider the twisted squares $Q_x$, $x\in (0,1)$, from Example \ref{ex:dim2}. $Q_x$ is simultaneously lattice reduced and lattice complete and we have $\wdt(Q_x) = 2$ and $\lim_{x\nearrow 1} \vol(Q_x) = 4$, as well as $\lim_{x\searrow 0} \diam(Q_x) = 2$ and $\lim_{x\searrow 0} \vol(Q_x) = 2$. 
    \end{proof}

\subsection{Bodies that are both reduced and complete}

Recall that a convex body is complete in the Euclidean setting if and only if it is of constant width. Hence, the class of Euclidean complete bodies is (properly) contained in the class of Euclidean reduced bodies.

 On the discrete side we have seen that for a fixed lattice $\Lambda\subset\RR^d$ the class of lattice complete bodies and the class of lattice reduced bodies are neither contained in one another, nor dual to one another (at least not in an obvious way). Nonetheless, it would be interesting to gain a better understanding on how the notions of completeness and reducedness interact in this setting:

\begin{question}
\label{quest:rc}
What can be said about convex bodies $C\subset\RR^d$ that are simultaneously lattice reduced and lattice complete? Do there exist origin-symmmetric convex bodies that are reduced and complete for $d\geq 3$?
\end{question}

Below we list the convex bodies that are both lattice reduced and lattice complete that we know.
\begin{enumerate}
    \item The simplices $S_d\subset\RR^d$ from Proposition \ref{prop:terminal}.
    \item The quadrangles $Q_x\subset \RR^2$, $x\in (-1,1)$ and the hexagon $H$ from Example \ref{ex:dim2}.
    \item The triangles obtained in Proposition \ref{prop:triangle_classification}.
    \item The local maximizers of the lattice width $T_3\subset\RR^3$ and $T_4\subset\RR^4$ in dimensions 3 and 4 (cf.\ Remark \ref{rem:locals}).
    \item The regular simplex $\Delta$ from \eqref{eq:reg_simplex} with respect to the lattice $\Lambda$ as in \eqref{eq:sym_lattice}, where $\alpha =\beta = \tfrac 34$ and $\gamma = \tfrac 12$. This can be verified using the \texttt{sagemath} script from Remark \ref{rem:locals}. Note that since $\gamma = \tfrac 12$, this simplex has an exceptional diameter segment $I$ passing between two of its edges. 
\end{enumerate}
 All examples in this list are either 2-dimensional, or simplices. To start with, it would be interesting to see any lattice reduced and lattice complete polytope in dimension higher than 2 which is not a simplex.

\section*{Acknowledgements}
The authors wish to thank Martina Juhnke-Kubitzke, Thomas Kahle, Raman Sanyal and Christian Stump for organizing the Combinatorial CoworkSpace 2022, where this collaboration started. Many thanks go to Gennadiy Averkov, Bernardo Gonz\'{a}lez Merino, Martin Henk, Antonello Macchia, Stefan Kuhlmann and Francisco Santos, for helpful discussions about various sections of the paper. We also thank the anonymous referee for their helpful remarks which significantly improved the quality of the paper. Ansgar Freyer is partially supported by the Austrian Science Fund (FWF) Project
P34446-N. 

 \newcommand{\noop}[1]{}

\begin{appendices}
    \section{Existence of realizers of $\flt(d)$}
    \label{sec:realizers}
    Since we are not aware of an immediate argument or a reference in the literature that shows that the flatness constant is achieved by a convex body, we present a proof here in order to keep the paper self-contained.

    \begin{proposition}
    \label{prop:realizers}
        Let $\Lambda\subset\RR^d$ be a $d$-dimensional lattice. There exists a hollow (with respect to $\Lambda$) convex body $C\subset\RR^d$ with $\wdt_\Lambda(C)=\flt(d)$.
    \end{proposition}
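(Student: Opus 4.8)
The plan is to realize $\flt(d)$ by a selection argument, the essential difficulty being that hollow bodies of bounded width are far from uniformly bounded (e.g.\ long thin slabs), so one cannot directly confine a maximizing sequence to a fixed ball with a fixed lattice. I would instead let \emph{both} the body and the lattice vary, normalizing each body to a fixed size at the cost of replacing $\Lambda$ by a varying lattice, and then recover compactness of the lattices from Mahler's selection theorem. Throughout, recall from the introduction that $w^\ast:=\flt(d)$ is finite (flatness theorem) and positive, and that by Lemma \ref{lemma:properties} the value $\flt(d)$ is the same for every $d$-lattice; so it suffices to produce one realizer with respect to \emph{some} $d$-lattice and then transport it to the given $\Lambda$ by a linear map.

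First I would fix a maximizing sequence of hollow bodies $C_n$ with $\wdt_\Lambda(C_n)\to w^\ast$ and $\wdt_\Lambda(C_n)\ge w_0:=w^\ast/2>0$. For each $n$ choose $A_n\in\GL_d(\RR)$ putting the origin-symmetric body $C_n-C_n$ into John position, so that $B\subseteq A_n(C_n-C_n)\subseteq\sqrt d\,B$, where $B$ is the Euclidean unit ball. After a translation the body $C_n':=A_nC_n$ then lies in a fixed ball of radius $\sqrt d$ and has inradius bounded below by a constant $c_d>0$ (a body whose difference body contains $B$ has width at least $1$ in every direction). By Lemma \ref{lemma:properties} the body $C_n'$ is hollow with respect to the lattice $\Lambda_n:=A_n\Lambda$ and $\wdt_{\Lambda_n}(C_n')=\wdt_\Lambda(C_n)\to w^\ast$.

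Next I would control the lattices $\Lambda_n$. Since $C_n'-C_n'\subseteq\sqrt d\,B$, every nonzero $y\in\Lambda_n^\star$ satisfies $\sqrt d\,|y|\ge h(C_n'-C_n';y)\ge\wdt_{\Lambda_n}(C_n')\ge w_0$, so the dual lattices obey $\lambda_1(\Lambda_n^\star)\ge w_0/\sqrt d$; by Minkowski's theorem this bounds $\det\Lambda_n$ from above. In the other direction, hollowness together with the inradius bound $c_d$ forces the covering radius of $\Lambda_n$ to be at least $c_d$ (the inball avoids $\Lambda_n$), which rules out overly dense lattices and, combined with the dual-systole bound, keeps $\lambda_1(\Lambda_n)$ bounded below. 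With systole bounded below and covolume bounded above, Mahler's selection theorem produces a subsequence $\Lambda_{n_k}\to\Lambda_\infty$ converging to a full-rank lattice, while Blaschke selection gives $C_{n_k}'\to C$ in the Hausdorff metric inside the fixed ball.

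It remains to check that $C$ is a realizer. If $z\in\inter(C)\cap\Lambda_\infty$, then (using $\Lambda_{n_k}\to\Lambda_\infty$) points of $\Lambda_{n_k}$ approximate $z$ and lie in $\inter(C_{n_k}')$ for large $k$, since a ball $B(z,r)\subseteq C$ forces any separating supporting hyperplane of $C_{n_k}'$ to leave a point of $C$ too far from $C_{n_k}'$; this contradicts hollowness of $C_{n_k}'$, so $C$ is hollow. Since the width is an infimum of the continuous functionals $K\mapsto h(K-K;y)$, approximating each $y_\infty\in\Lambda_\infty^\star$ by vectors $y_k\in\Lambda_{n_k}^\star$ gives $h(C-C;y_\infty)=\lim_k h(C_{n_k}'-C_{n_k}';y_k)\ge\limsup_k\wdt_{\Lambda_{n_k}}(C_{n_k}')$, and taking the infimum over $y_\infty$ yields $\wdt_{\Lambda_\infty}(C)\ge w^\ast$; conversely $C$ is hollow, so $\wdt_{\Lambda_\infty}(C)\le\flt(d)=w^\ast$, giving equality. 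As $\wdt_{\Lambda_\infty}(C)=w^\ast>0$, Lemma \ref{lemma:vdc_polar} shows $C$ has non-empty interior and is a genuine convex body; transporting it to the original lattice by Lemma \ref{lemma:properties} yields the desired realizer. I expect the main work to be the third paragraph: extracting the precise systole and covolume bounds on $\Lambda_n$ from the hollowness and width constraints so as to legitimately invoke Mahler compactness, and verifying the joint body-and-lattice limiting inequality for the width used at the end.
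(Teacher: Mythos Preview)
Your overall strategy matches the paper's: normalize each body by an affine map so that it sits in a fixed ball, transfer the lattice along, and then pass to a limit using Blaschke selection on the bodies and Mahler selection on the lattices. Your bound on $\det\Lambda_n$ via the dual systole (Minkowski applied to $\Lambda_n^\star$) is in fact a clean alternative to the paper's route through a weak Makai-type volume inequality.

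The gap is the lower bound on $\lambda_1(\Lambda_n)$. The implication you invoke --- covering radius bounded below together with $\lambda_1(\Lambda_n^\star)$ bounded below forces $\lambda_1(\Lambda_n)$ bounded below --- is false. For the diagonal lattice $\Lambda_\varepsilon=\varepsilon\ZZ e_1\oplus\ZZ e_2\oplus\cdots\oplus\ZZ e_d$ one has $\lambda_1(\Lambda_\varepsilon^\star)=1$, covering radius $\approx\tfrac{1}{2}\sqrt{d-1}$, and $\det\Lambda_\varepsilon=\varepsilon$, yet $\lambda_1(\Lambda_\varepsilon)=\varepsilon\to 0$. None of the three quantities you control (covering radius, dual systole, covolume) prevents this degeneration; in particular your covolume bound is only an \emph{upper} bound, and Minkowski's second theorem then gives no floor for $\lambda_1$. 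So Mahler compactness is not yet available, and the limit lattice could fail to be discrete.

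This is exactly the step where the paper does real work. It bounds $\lambda_1(B_d,\Lambda_i)$ below by bounding the \emph{lattice diameter} of the original body $C_i$ above, and that diameter bound is obtained by a projection argument: if some lattice segment in $C_i$ were too long, one projects onto the orthogonal hyperplane, shows the projected body is ``almost hollow'' after a controlled shrinking, and obtains a hollow $(d-1)$-body of width exceeding $\flt(d-1)$, a contradiction. In your normalization the same idea would read: a short vector $v\in\Lambda_n$ forces a long lattice segment in $C_n'$ (since $B\subseteq C_n'-C_n'$), and one then needs the projection/flatness-in-lower-dimension argument to rule this out. Your covering-radius remark does not substitute for it.
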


    The proof requires the selection theorems of Blaschke and Mahler.

    \begin{theorem}[Blaschke]
        \label{thm:blaschke}
        Let $(C_i)_{i\in \NN}$ be a sequence of convex bodies in $\RR^d$ such that there exist convex bodies $K,L\subset\RR^d$ with $K\subseteq C_i\subseteq L$ for all $i\in\NN$. Then, $(C_i)_{i\in\NN}$ has a convergent subsequence (in the Hausdorff metric).
    \end{theorem}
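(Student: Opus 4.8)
The plan is to prove this classical fact by a covering-and-diagonalization argument: I will extract from $(C_i)_{i\in\NN}$ a subsequence that is Cauchy with respect to the Hausdorff metric $\hausdorff$, invoke completeness to obtain a limit, and then check that this limit is a convex body.

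First I would confine everything to a fixed compact arena. Since $C_i\subset L$ for all $i$ and $L$ is bounded, every $C_i$ is contained in a fixed closed cube $W\subset\RR^d$. For each $n\in\NN$, cover $W$ by finitely many closed balls $B_1^{(n)},\dots,B_{N_n}^{(n)}$ of radius $1/n$. To each body $C_i$ I associate its \emph{level-$n$ pattern}, the set $\{\,j : C_i\cap B_j^{(n)}\neq\emptyset\,\}\subseteq\{1,\dots,N_n\}$. Since there are only finitely many possible patterns at each level, an iterated pigeonhole argument (extract a subsequence that is constant in its level-$1$ pattern, then a further subsequence constant at level $2$, and so on, and finally diagonalize) yields a subsequence $(C_{i_k})_k$ which, for every fixed $n$, is eventually constant in its level-$n$ pattern.

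Next I would show this subsequence is Cauchy. The key estimate is that if two of the bodies, say $C$ and $C'$, share the same level-$n$ pattern, then $\hausdorff(C,C')\leq 2/n$: any point of $C$ lies in some ball $B_j^{(n)}$ that $C$ meets, which by hypothesis $C'$ also meets, so that point lies within $2/n$ of a point of $C'$; hence $C\subset C'+(2/n)B$, and by symmetry $C'\subset C+(2/n)B$. Combined with the previous paragraph, this makes $(C_{i_k})_k$ a Cauchy sequence in the Hausdorff metric. Since the space of nonempty compact subsets of $\RR^d$ under $\hausdorff$ is complete (a standard fact, which in any case follows from the same covering estimate), the subsequence converges to a nonempty compact set $C$.

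Finally I would verify that $C$ is a convex body. Convexity follows because for $x,y\in C$ one may choose $x_k,y_k\in C_{i_k}$ with $x_k\to x$ and $y_k\to y$; convexity of $C_{i_k}$ gives $tx_k+(1-t)y_k\in C_{i_k}$ for $t\in[0,1]$, and passing to the limit places $tx+(1-t)y$ in $C$. Nonemptiness of the interior follows from $K\subset C$: indeed, each $x\in K$ lies in every $C_{i_k}$, hence in $C+\varepsilon_k B$ with $\varepsilon_k=\hausdorff(C_{i_k},C)\to 0$, so $x\in C$ because $C$ is closed; as $K$ is a convex body it has nonempty interior, and therefore so does $C$. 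The main obstacle is the Cauchy verification—specifically, recognizing that the finitely-many-patterns bookkeeping is exactly what forces the Hausdorff distances between far-out members of the subsequence to shrink—together with invoking (or briefly re-deriving) completeness of the Hausdorff metric; the convexity and interior-nonemptiness steps are then routine limit arguments.
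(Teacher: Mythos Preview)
The paper does not prove this theorem; it merely states Blaschke's selection theorem as a classical tool and invokes it in the proof of Proposition~\ref{prop:realizers}. Your argument is a correct version of one of the standard proofs: the covering-and-pattern bookkeeping combined with diagonalization yields a Cauchy subsequence, completeness of the hyperspace of nonempty compacts under $\hausdorff$ supplies the limit, and the verification that the limit is convex with nonempty interior (via $K\subset C$) is sound. There is nothing to compare against in the paper itself.
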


    Recall from Section \ref{sec:background} that two bases $A,B\in\GL_d(\RR)$ of a $d$-dimensional lattice $\Lambda\subset\RR^d$ differ only by a right multiplication with a unimodular matrix, i.e., we have $B = AU$, for some $U\in\GL_d(\ZZ)$.
    Therefore the $d$-dimensional lattices in $\RR^d$ correspond one-to-one to the elements of $\GL_d(\RR)/\GL_d(\ZZ)$. This turns the set of $d$-dimensional lattices into a (metrizable) topological space by considering the quotient topology on $\GL_d(\RR)/\GL_d(\ZZ)$.
    
    \begin{theorem}[Mahler]
    \label{thm:mahler}
        Let $(\Lambda_i)_{i\in\NN}$ be a sequence of $d$-dimensional lattices in $\RR^d$ such that there exist constants $c_1,c_2>0$ such that $\det\Lambda_i < c_1$ and $\sukz_1(B_d,\Lambda_i) > c_2$, where $B_d$ denotes the Euclidean unit ball. Then the sequence $(\Lambda_i)_{i\in\NN}$ has a convergent subsequence.
    \end{theorem}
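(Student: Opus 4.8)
The plan is to realize the space of lattices as the quotient $\GL_d(\RR)/\GL_d(\ZZ)$, exactly as set up above, and to reduce the statement to extracting a convergent subsequence of suitably chosen \emph{basis matrices}. Concretely, I would choose for each $i$ a basis $b_1^{(i)},\dots,b_d^{(i)}$ of $\Lambda_i$ whose vectors all lie in one fixed ball, collect these into matrices $B_i\in\GL_d(\RR)$ with $\Lambda_i=B_i\ZZ^d$, extract a convergent subsequence $B_{i_k}\to B$ in $\RR^{d\times d}$ by the Bolzano--Weierstrass theorem, and verify that $B$ is again invertible so that $\Lambda:=B\ZZ^d$ is a full-rank lattice. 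Since the quotient map $q\colon\GL_d(\RR)\to\GL_d(\RR)/\GL_d(\ZZ)$ is continuous and sends $B_i$ to the point representing $\Lambda_i$, the convergence $B_{i_k}\to B$ immediately yields $\Lambda_{i_k}\to\Lambda$ in the topology on the space of lattices.

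Two quantitative inputs make this work. First, the hypothesis $\sukz_1(B_d;\Lambda_i)>c_2$ says that every nonzero vector of $\Lambda_i$ has Euclidean length exceeding $c_2$, so the open balls of radius $c_2/2$ centred at the points of $\Lambda_i$ are pairwise disjoint; comparing the volume of this packing with that of a large box gives the lower bound
\[
\det\Lambda_i\ \ge\ \vol(B_d)\,(c_2/2)^d\ =:\ \delta\ >\ 0 .
\]
Second, to bound the basis vectors I would invoke Minkowski's reduction theory (see \cite{geometryofnumbers, cassels}): every lattice admits a basis with $\prod_{j}|b_j^{(i)}|\le c_d\det\Lambda_i$ for a constant $c_d$ depending only on $d$. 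Since each basis vector is a nonzero lattice vector we have $|b_j^{(i)}|\ge\sukz_1(B_d;\Lambda_i)>c_2$, and combining this with $\det\Lambda_i<c_1$ yields
\[
|b_j^{(i)}|\ \le\ \frac{c_d\,\det\Lambda_i}{\prod_{k\neq j}|b_k^{(i)}|}\ <\ \frac{c_d\,c_1}{c_2^{\,d-1}}\ =:\ M
\]
for every $j$. Thus all matrices $B_i$ lie in the compact set of $A\in\RR^{d\times d}$ whose columns have length at most $M$, so a convergent subsequence $B_{i_k}\to B$ exists. Finally $|\det B|=\lim_k|\det B_{i_k}|=\lim_k\det\Lambda_{i_k}\ge\delta>0$ by the first bound, whence $B$ is invertible and the argument closes as described.

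I expect the main obstacle to be the selection of the bounded \emph{bases} in the second step. It is tempting to use vectors realizing the successive minima of $\Lambda_i$ with respect to $B_d$ — which one can bound above via Minkowski's second theorem together with $\det\Lambda_i<c_1$ — but in dimension $d\ge 5$ such vectors need not form a basis, and passing to the sublattice they generate loses control of $\Lambda_i$ itself. This is precisely why the argument must appeal to reduction theory (or an explicit reduction procedure) to produce an honest basis whose vector lengths are simultaneously bounded above and below. The remaining ingredients — the packing bound for $\det\Lambda_i$, the compactness extraction, and the continuity of the quotient map — are then routine.
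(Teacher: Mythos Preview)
The paper does not prove Mahler's selection theorem; it is stated in the appendix as a classical result (alongside Blaschke's selection theorem) and invoked as a black box in the proof of Proposition~\ref{prop:realizers}. Standard references for a proof are the books \cite{geometryofnumbers, cassels} cited in the paper.

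That said, your proposal is a correct and essentially standard proof of the theorem. The key steps---lower-bounding $\det\Lambda_i$ via Minkowski's first theorem (or the packing argument you describe), producing uniformly bounded bases via Minkowski reduction, extracting a convergent subsequence of basis matrices by Bolzano--Weierstrass, and checking that the limit matrix is nonsingular---are exactly how the theorem is usually established. Your remark about why successive-minima vectors are inadequate in dimension $d\ge 5$ and why one must appeal to reduction theory for an honest basis is also on point. One small quibble: the inequality $\det\Lambda_i\ge\vol(B_d)(c_2/2)^d$ should be strict given the strict hypothesis $\sukz_1(B_d;\Lambda_i)>c_2$, but this is immaterial since either way you obtain a uniform positive lower bound $\delta$.
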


    \begin{proof}[Proof of Proposition \ref{prop:realizers}]
        Without loss of generality, let $\Lambda=\ZZ^d$. Let $(C_i)_{i\in\NN}$ be a sequence of hollow convex bodies such that $\lim_{i\to\infty} \wdt_{\ZZ^d}(C_i) = \flt(d)$. By John's theorem \cite[Theorem 10.12.2]{schneider} there exist vectors $t_i\in\RR^d$ and linear transformations $A_i\in\GL_d(\RR)$ such that
        \begin{equation}
            \label{eq:john}
            B_d\subseteq A_iC_i+t_i\subseteq d\, B_d.
        \end{equation}
        Let $\widetilde{C_i} = A_iC_i+t_i$ and $\Lambda_i=A_i\ZZ^d$. By Theorem \ref{thm:blaschke}, we can assume that that $(\widetilde{C_i})_{i\in\NN}$ is convergent to a convex body $C$. Assume that also the sequence $(\Lambda_i)_{i\in\NN}$ converges to a lattice $\Lambda$. Then, by the continuity of the successive minima,
        \[
        \wdt_\Lambda(C) = \lim_{i\to\infty} \wdt_{\Lambda_i}(\widetilde{C_i}) = \lim_{i\to\infty} \wdt_{\ZZ^d}(C_i) = \flt(d),
        \]
        which proves the claim.

        In order to show that $(\Lambda_i)_{i\in\NN}$ has a convergent subsequence, we show that the hypothesis of Theorem \ref{thm:mahler} are met. First, we note that by Minkowski's first theorem on successive minima combined with the reverse Blaschke-Santal\'o inequality, there exists a weak resolution of Makai's conjecture, i.e., a constant $c_1>0$ depending only on $d$ such that $\vol(C_i)\geq c_1\wdt_{\ZZ^d}(C_i)^d$ (see for instance \cite[Lemma 3.2]{polyslicing}). Thus,
        \begin{equation}
        \label{eq:determinant}
        \det \Lambda_i = |\det A_i| = \frac{\vol(\widetilde{C_i})}{\vol(C_i)} \leq \frac{d^d \vol(B_d)}{c_1 \wdt_{\ZZ^d}(C_i)^d},
        \end{equation}
        where we also used \eqref{eq:john} to estimate the numerator. It was shown in \cite{codenottisantos} that $\flt(d)\geq \flt(d-1) + 1$. Since $\wdt_{\ZZ^d}(C_i)$ converges to $\flt(d)$, we may assume \begin{equation}
            \label{eq:wdt_lower_bound}
            \wdt_{\ZZ^d}(C_i)\geq \flt(d-1)+\tfrac 12.
        \end{equation} So we obtain from \eqref{eq:determinant} that $\det\Lambda_i\leq c_2$, for a constant $c_2>0$ depending only on $d$.

        In order to treat the successive minima, we first use the inclusion \eqref{eq:john} that leads to the estimate
        \[
        \sukz_1(B_d,\Lambda_i) \geq \diam_{\Lambda_i}(\widetilde{C_i})^{-1} = \diam_{\ZZ^d}(C_i)^{-1}.
        \]
        We can finish the proof by showing that
        \begin{equation}
            \label{eq:c3}
            \diam_{\ZZ^d}(C_i) < \max\{ \flt(d)+1, 2(\flt(d-1) + 1)\} =:c_3.
        \end{equation}
        Towards a contradiction, assume that $\diam_{\ZZ^d}(C_i) \geq c_3$. Then there exists a segment $S=[a,a+c_3v]\subset C_i$, where $v\in\ZZ^d\setminus\{0\}$. Let $y\in\ZZ^d$ be a width direction of $C_i$. We have
        \[
        \wdt_{\ZZ^d}(C_i) = \wdt(C_i;y) \geq c_3 |v\cdot y|.
        \]
        Since $v\cdot y\in\ZZ$ and $c_3 > \flt(d)$ it follows that the width directions of $C_i$ are orthogonal to $v$. Let $K_i$ be the orthogonal projection of $C_i$ on $v^\perp$ and let $\Gamma$ be the lattice obtained by projecting $\ZZ^d$ on $v^\perp$. Then we have $\wdt_\Gamma(K_i) = \wdt_{\ZZ^d}(C_i)$. 

        While $K_i$ is in general not hollow, we show in the following that it is ``almost hollow''.  Let $p\in v^\perp$ be the point to which $S$ projects under the orthogonal projection $\pi_v:\RR^d\to v^\perp$. For a point $x\in K_i\setminus\{p\}$, let
        \[
        r(x) = \frac{\vol_1([p,x])}{\vol_1(C_i\cap ( p+\RR_{\geq 0}\cdot (x-p) )}.
        \]
        This is the Minkowski functional of $C_i$ when $p$ is interpreted as the origin. Let $x\in \Gamma \cap \inter(K_i)$. Since the lattice length of $S$ is at least $c_3$, by convexity, the lattice length of the fiber of $x$ under the projection $\pi$ is at least $c_3(1-r(x))$. On  the other hand, since $C_i$ is hollow, this fiber must not contain an interior lattice point of $\ZZ^d$. Its lattice length is therefore bounded by 1 from above. This gives $r(x) \geq 1-\tfrac{1}{c_3}$. This means that the convex body $K_i^\prime$ obtained by scaling $K_i$ by a factor $1-\tfrac{1}{c_3}$ around $p$ is hollow with respect to $\Gamma$ and so we have with \eqref{eq:wdt_lower_bound}:
        \[
            \flt(d-1) +\tfrac 12 \leq \wdt_{\ZZ^d}(C_i) = \wdt_\Gamma(K_i) = \frac{\wdt_\Gamma(K_i^\prime)}{1-\frac{1}{c_3}}\leq \frac{\flt(d-1)}{1-\frac{1}{c_3}}.
        \]
        After rearranging this, we see that $c_3\leq 2\flt(d-1) +1$, which contradicts the choice of $c_3$ in \eqref{eq:c3}. This finishes the proof.
    \end{proof}

\end{appendices}


\begin{thebibliography}{10}

\bibitem{anciaux11}
H.~Anciaux and B.~Guilfoyle.
\newblock On the three-dimensional {B}laschke-{L}ebesgue problem.
\newblock {\em Proc. Am. Math. Soc.}, 139(5):1831--1839, 2011.

\bibitem{Averkov2013}
G.~Averkov.
\newblock A proof of {L}ov{\'a}sz's theorem on maximal lattice-free sets.
\newblock {\em Beitr. Algebra Geom.}, 54(1):105--109, 2013.

\bibitem{averkovcodenotti}
G.~Averkov, G.~Codenotti, A.~Macchia, and F.~Santos.
\newblock A local maximizer for lattice width of 3-dimensional hollow bodies.
\newblock {\em Discrete Appl. Math.}, 298:129--142, 2021.

\bibitem{transf}
W.~Banaszczyk.
\newblock Inequalities for convex bodies and polar reciprocal lattices in
  $\mathbb{R}^n$ {II}: Application of $k$-convexity.
\newblock {\em Discrete Comput. Geom.}, 16:305--311, 1996.

\bibitem{BanaszczykLitvakPajorEtAl1999}
W.~Banaszczyk, A.~E. Litvak, A.~Pajor, and J.~Stanislaw.
\newblock The flatness theorem for nonsymmetric convex bodies via the local
  symmetry of {B}anach spaces.
\newblock {\em Math. Oper. Res.}, 24(3):728--750, 1999.

\bibitem{baranyfuredi}
I.~B{\'a}r{\'a}ny and Z.~F{\"u}redi.
\newblock On the lattice diameter of a convex polygon.
\newblock {\em Discrete Math.}, 241(1-3):41--50, 2001.

\bibitem{bezdek}
K.~Bezdek.
\newblock On a strengthening of the {B}laschke-{L}eichtweiss theorem.
\newblock {\em J. Geom.}, 114(2), 2023.

\bibitem{Blaschke15}
W.~Blaschke.
\newblock Konvexe {B}ereiche gegebener konstanter {B}reite und kleinsten
  {I}nhalts.
\newblock {\em Math. Ann.}, 76(4):504--513, 1915.

\bibitem{gonzalez_et_al}
R.~Brandenberg, B.~{González Merino}, T.~Jahn, and H.~Martini.
\newblock Is a complete, reduced set necessarily of constant width?
\newblock {\em Adv. Geom.}, 19(1):31--40, 2019.

\bibitem{sagmeister}
K.~J. Böröczky and {\'{A}}.~Sagmeister.
\newblock Convex bodies of constant width in spaces of constant curvature and
  the extremal area of {R}euleaux triangles.
\newblock {\em Studi. Sci. Math. Hung.}, 59(3-4):244 -- 273, 2022.

\bibitem{ccg96}
S.~Campi, A.~Colesanti, and P.~Gronchi.
\newblock Minimum problems for volumes of convex bodies.
\newblock In {\em Partial differential equations and applications}, volume 177
  of {\em Lectore Notes in Pure and Appl. Math.}, pages 43--55, New York, 1996.
  Dekker.

\bibitem{cassels}
J.~W.~S. Cassels.
\newblock {\em An introduction to the Geometry of Numbers}.
\newblock Springer, 1971.

\bibitem{complete_book}
G.~D. Chakerian and H.~Groemer.
\newblock {\em Convex Bodies of Constant Width}, pages 49--96.
\newblock Birkh{\"a}user {B}asel, Basel, 1983.

\bibitem{generalised_flatness}
G.~Codenotti, T.~Hall, and J.~Hofscheier.
\newblock Generalised flatness constants: A framework applied in dimension 2.
\newblock Preprint arXiv:\allowbreak2110.02770, 2021.

\bibitem{codenottisantos}
G.~Codenotti and F.~Santos.
\newblock Hollow polytopes of large width.
\newblock {\em Proc. Am. Math. Soc.}, 148:1, 06 2019.

\bibitem{conwaysloanebook}
J.~H. Conway and N.~J.~A. Sloane.
\newblock {\em Sphere Packings, Lattices and Groups}, volume 290 of {\em
  Grundlehren der mathematischen Wissenschaften, A Series of Comprehensive
  Studies in Mathematics}.
\newblock Springer, New York, NY, third edition, 1999.

\bibitem{coolslemmens}
F.~Cools and A.~Lemmens.
\newblock Minimal polygons with fixed lattice width.
\newblock {\em Ann. Comb.}, 23(2):285--293, 2019.

\bibitem{polyslicing}
A.~Freyer and M.~Henk.
\newblock Polynomial bounds in {K}oldobsky's discrete slicing problem.
\newblock Preprint arXiv:\allowbreak2303.15976, 2023.

\bibitem{Merino2018}
B.~Gonz{\'a}lez~Merino, T.~Jahn, A.~Polyanskii, and G.~Wachsmuth.
\newblock Hunting for reduced polytopes.
\newblock {\em Discrete Comput. Geom.}, 60(3):801--808, 2018.

\bibitem{gonzalez_schymura}
B.~{Gonz\'{a}lez Merino} and M.~Schymura.
\newblock On densities of lattice arrangements intersecting every
  $i$-dimensional affine subspace.
\newblock {\em Discrete Comput. Geom.}, 58:663--685, 2017.

\bibitem{gruber_cvx_geom}
P.~M. Gruber.
\newblock {\em Convex and Discrete Geometry}, volume 336 of {\em A Series of
  Comprehensive Studies in Mathematics}.
\newblock Springer Berlin Heidelberg, 2007.

\bibitem{geometryofnumbers}
P.~M. Gruber and C.~G. Lekkerkerker.
\newblock {\em Geometry of Numbers}.
\newblock North-Holland, second edition, 1987.

\bibitem{henkhenzehernandez}
M.~Henk, M.~Henze, and M.~{Hernandez Cifre}.
\newblock On extensions of {M}inkowski's second theorem on successive minima.
\newblock {\em Forum Math.}, 28(2):311--326, 2016.

\bibitem{henk_xue}
M.~Henk and F.~Xue.
\newblock On succesive minima-type inequalities for the polar of a convex body.
\newblock {\em Rev. R. Acad. Cienc. Exactas Fís. Nat. Ser. A Mat.},
  113(3):2601--2616, 2019.

\bibitem{horvath}
{\'A}.~G. Horv{\'a}th.
\newblock On the {D}irichlet---{V}oronoi cell of unimodular lattices.
\newblock {\em Geom. Dedicata}, 63(2):183--191, 1996.

\bibitem{hurkens}
C.~A.~J. Hurkens.
\newblock Blowing up convex sets in the plane.
\newblock {\em Linear Algebra Appl.}, 134:121--128, 1990.

\bibitem{bilinear}
D.~Husemoller and J.~W. Milnor.
\newblock {\em Symmetric Bilinear Forms}.
\newblock Springer, 1973.

\bibitem{iglesias_santos}
{\'{O}}.~Iglesias-Vali{\~{n}}o and F.~Santos.
\newblock Classification of empty lattice 4-simplices of width larger than 2.
\newblock {\em Trans. Amer. Math. Soc.}, 371:6605--6625, 2019.

\bibitem{coveringminima}
R.~Kannan and L.~Lovasz.
\newblock Covering minima and lattice-point-free convex bodies.
\newblock {\em Ann. Math. (2)}, 128(3):577--602, 1988.

\bibitem{khinchine}
A.~Khinchine.
\newblock A quantitative formulation of the approximation theory of
  {K}ronecker.
\newblock {\em Izvestiya Akad. Nauk SSSR. Ser. Mat.}, 12:113--122, 1948.

\bibitem{oudet}
T.~Lachand-Robert and E.~Oudet.
\newblock Bodies of constant width in arbitrary dimension.
\newblock {\em Math. Nachrichten}, 280(7):740--750, 2007.

\bibitem{lassak90}
M.~Lassak.
\newblock Reduced convex bodies in the plane.
\newblock {\em Isr. J. Math.}, 70(3):365--379, 1990.

\bibitem{lassak03}
M.~Lassak.
\newblock On the smallest disk containing a planar reduced convex body.
\newblock {\em Arch. Math.}, 80(5):553--560, 2003.

\bibitem{lassak05}
M.~Lassak.
\newblock Area of reduced polygons.
\newblock {\em Publ. Math. Debrecen}, 67:349--354, 2005.

\bibitem{spherical_complete}
M.~Lassak.
\newblock Complete spherical convex bodies.
\newblock {\em J. Geom.}, 111(35), 2020.

\bibitem{spherical_reduced}
M.~Lassak.
\newblock {\em Spherical Geometry---A Survey on Width and Thickness of Convex
  Bodies}, pages 7--47.
\newblock Springer International Publishing, Cham, 2022.

\bibitem{minkowski_space}
M.~Lassak and H.~Martini.
\newblock Reduced bodies in {M}inkowski space.
\newblock {\em Acta Math. Hung.}, 106:17--26, 2005.

\bibitem{reduced_survey}
M.~Lassak and H.~Martini.
\newblock Reduced convex bodies in {E}uclidean space—a survey.
\newblock {\em Expo. Math.}, 29(2):204--219, 2011.

\bibitem{Lebesgue14}
H.~Lebesgue.
\newblock Sur le probl\`{e}me des isop\'{e}rim\`{e}tres et sur les domaines de
  largeur constante.
\newblock {\em Bull. Soc. Math. France}, 7:72--76, 1914.

\bibitem{leichtweiss}
K.~Leichweiss.
\newblock Curves of constant width in non-{E}uclidean geometry.
\newblock {\em Abh. Math. Sem. Univ. Hamburg}, 75:257--284, 2005.

\bibitem{lenstra}
H.~W. Lenstra.
\newblock Integer programming with a fixed number of variables.
\newblock {\em Math. Oper. Res.}, 8(4):538--548, 1983.

\bibitem{lovasz_2d}
L.~Lov{\'{a}}sz.
\newblock {\em Geometry of numbers and integer programming}, pages 177--201.
\newblock Kluwer Academic Publishers, 1989.

\bibitem{makai}
E.~{Makai, Jr}.
\newblock On the thinnest nonseparable lattice of convex bodies.
\newblock {\em Studi. Sci. Math. Hung.}, 13(1-2):19--27, 1981.

\bibitem{martini_simplices}
H.~Martini and K.~Swanepoel.
\newblock Non-planar simplices are not reduced.
\newblock {\em Publ. Math. Debrecen}, 64(1-2):101--106, 2004.

\bibitem{minkowski_space2}
H.~Martini and S.~Wu.
\newblock Complete sets need not be reduced in {M}inkowski spaces.
\newblock {\em Beitr. Algebra Geom.}, pages 533--539, 2015.

\bibitem{weltgeetal}
L.~Mayrhofer, J.~Schade, and S.~Weltge.
\newblock Lattice-free simplices with lattice width $2d-o(d)$.
\newblock In {\em Integer Programming and Combinatorial Optimization: 23rd
  International Conference, IPCO 2022, Eindhoven, The Netherlands, June
  27–29, 2022, Proceedings}, page 375–386, Berlin, Heidelberg, 2022.
  Springer-Verlag.

\bibitem{pal}
J.~P{\'{a}}l.
\newblock Ein {M}inimierungsproblem f\"ur {O}vale.
\newblock {\em Math. Ann.}, 83:311--319, 1921.

\bibitem{ReisRothvoss}
V.~Reis and T.~Rothvoss.
\newblock The subspace flatness conjecture and faster integer programming.
\newblock Preprint arXiv:\allowbreak2303.14605, 2023.

\bibitem{rudelson}
M.~Rudelson.
\newblock Distances between non-symmetric convex bodies and the
  ${MM}^\star$-estimate.
\newblock {\em Positivity}, 4:161--178, 2000.

\bibitem{schneider}
R.~Schneider.
\newblock {\em Convex bodies: The {B}runn-{M}inkowski Theory}.
\newblock Cambridge University press, second expanded edition, 2014.

\bibitem{sagemath}
{The Sage Developers}.
\newblock {\em {S}ageMath, the {S}age {M}athematics {S}oftware {S}ystem
  ({V}ersion 9.4)}, 2022.
\newblock \texttt{ https://www.sagemath.org}.

\bibitem{ziegler}
G.~M. Ziegler.
\newblock {\em Lectures on Polytopes}.
\newblock Graduate Texts in Mathematics. Springer, 2012.

\end{thebibliography}
\end{document}